\numberwithin{equation}{section}
\newtheorem{thm}{Theorem}[section]
\newtheorem{theorem}[thm]{Theorem}
\newtheorem{fact}[thm]{Fact}
\newtheorem{assumption}[thm]{Assumption}
\newtheorem{definition}[thm]{Definition}
\newtheorem{lemma}[thm]{Lemma}
\newtheorem{remark}[thm]{Remark}
\newenvironment{proof}[1][Proof]{\textbf{#1.} }{\ \rule{0.5em}{0.5em}}
\newcommand{\konv}[1]{\stackrel{#1}{\longrightarrow}}
\newcommand{\supp}{\operatorname{supp}}
\newcommand{\hit}{H}
\newcommand{\start}{{\mathrm{s}}}
\newcommand{\new}{\mathrm{new}}
\newcommand{\pstrich}{\mathbb{P}}
\newcommand{\sk}[1]{\left\langle{#1}\right\rangle}
\newcommand{\cL}{\mathcal{L}}
\newcommand{\cH}{\mathcal{H}}
\newcommand{\W}{\mathcal{W}}
\newcommand{\B}{\mathcal{B}}
\newcommand{\R}{{\mathbb R}}  
\newcommand{\Q}{{\mathbb Q}}  
\newcommand{\N}{{\mathbb N}}  
\newcommand{\Z}{{\mathbb Z}}  
\newcommand{\F}{{\mathcal{F}}}
\newcommand{\G}{{\mathcal{G}}}  
\renewcommand{\mod}{{\operatorname{mod}}}
\begin{document}
\thispagestyle{empty}

\begin{center}
{\LARGE Random interlacements\\
 for vertex-reinforced jump processes}\\[3mm]
Franz Merkl\footnote{Mathematical Institute, Ludwig-Maximilians-Universit\"at M\"unchen,
Theresienstr.\ 39,
D-80333 Munich,
Germany.
E-mail: merkl@math.lmu.de
}
\hspace{5mm} 
Silke W.W.\ Rolles\footnote{Technische Universit{\"{a}}t M{\"{u}}nchen, 
Zentrum Mathematik, Bereich M5,
D-85747 Garching bei M{\"{u}}nchen,
Germany.
E-mail: srolles@ma.tum.de}
\hspace{5mm} 
Pierre Tarr{\`e}s\footnote{NYU-ECNU 
Institute of Mathematical Sciences at NYU Shanghai, 
Courant Institute of Mathematical Sciences, New York, 
CNRS and Universit\'e Paris-Dauphine, 
PSL Research University, Ceremade,
75016 Paris, France.
E-mail: tarres@nyu.edu}
\\[3mm]
{\small \today}\\[3mm]
\end{center}

\begin{abstract}
We introduce random interlacements for transient vertex-reinforced
jump processes on a general graph $G$. Using 
increasing finite subgraphs $G_n$ of $G$ with wired boundary conditions, 
we show convergence of the vertex-reinforced jump process on 
$G_n$ observed in a finite window to the random interlacement
observed in the same window. \\[2mm]
MSC subject classification: primary 60K35, secondary 60K37, 60J27\\
Keywords: random interlacement, vertex-reinforced jump process
\end{abstract}

\section{Introduction}

In this paper, we analyze random interlacements for transient 
vertex-reinforced jump processes on infinite graphs. 
This joins two worlds, 
random interlacements for transient Markov processes and vertex-reinforced 
jump processes (VRJP). Random interlacements for transient Markov processes 
are a well studied topic; we describe this theory for 
transient Markovian jump processes in Section~\ref{sec:history-interlacement} 
as an ingredient for the present work. On the other hand, 
vertex-reinforced jump processes starting at a given point 
can be seen as mixture of Markovian jump processes with a mixing measure 
depending on the starting point. However, controlling the behavior of that 
mixing measure as the starting point goes to infinity causes a lot of 
technical problems concerning absolute continuity and uniform integrability. 
These problems become more tractable if the starting point is fixed on a 
finite graph growing towards an infinite graph. However, the random 
jump rates governed by the mixing measure strongly depend on the size
of the finite graph; this makes the question more complicated than 
for classical random walk in a random environment. 
The purpose of this paper is to show that it is 
still possible to obtain a corresponding limiting random interlacement. 
We review the parts of the theory of VRJP that we need in 
Section~\ref{sec:intro-vrjp}. The main result of this paper concerns 
the convergence of loop measures of VRJP on finite pieces of a graph with 
wired boundary conditions to random interlacements as the pieces grow to the 
infinite graph. It is stated in Section~\ref{subsection-approx-of-interlace-by-vrjp}.

\subsection{Vertex-reinforced jump processes}
\label{sec:intro-vrjp}

The vertex-reinforced jump process is a continuous time process $Y=(Y_s)_{s\ge 0}$ 
taking values in the set $V$ of vertices of 
a locally finite connected undirected graph $G=(V,E)$ without direct loops. The edges 
$e=\{x,y\}\in E$ with $x,y\in V$ are assigned conductances $C_e=C_{xy}>0$. 
The process starts in a vertex $o\in V$ and it keeps the memory of the 
local times $L_x(s)$ spent at any vertex $x\in V$ at time $s$, where we 
use the convention that initial local times equal $1=L_x(0)$ for all $x\in V$. 
Given that $Y$ is at vertex $x$ at time $s$, it jumps to a neighboring vertex $y$ 
at rate $C_{xy}L_y(s)$. The process was conceived by Werner and first studied
by Davis and Volkov in \cite{davis-volkov1} and \cite{davis-volkov2}. 
In the present paper, we look at VRJP in a different time scale, called 
\textit{exchangeable time scale}. We encode it as a process $\hat w=(w,l)$
in discrete time decorated with the waiting times $l=(l(k))_{k\in\N_0}$ 
at the vertices $w=(w(k))_{k\in\N_0}$. More precisely, the time change is 
given by 
\begin{align}
t=D(s)=\sum_{x\in V}(L_x(s)^2-1).
\end{align}
We consider the process $Z=(Z_t=Y_{D^{-1}(t)})_{t\ge 0}$; the component 
$w(k)$ means its location immediately before the $k+1$-st jump time,  
and $l(k)$ is the time spent by $Z$ at $w(k)$ between the $k$-th and 
$k+1$-st jump time. 

In the remainder of the article, we fix a vertex $o\in V$ and 
make the following assumption:
\begin{assumption}
VRJP on $G$ starting at $o$ is transient, i.e.\
almost all paths visit every vertex at most finitely often.   
\end{assumption}

In particular, according to corollary 4 in \cite{sabot-tarres2012},
this assumption is fulfilled for $\Z^d$, $d\ge 3$ and large constant 
initial weights $C$. 

In the following, let $\R_+=\{a\in\R:a>0\}$ and $\R_+^0=\{a\in\R:a\ge 0\}$. 
We use the convention $C_{xy}=0$ whenever $\{x,y\}$ is not an edge in $E$. 
Sabot and Tarr\`es \cite{sabot-tarres2012} and 
Sabot and Zeng \cite{sabot-zeng15} showed that the time-changed VRJP 
$Z=(Z_t)_{t\ge 0}$ starting in the vertex $o$ is a Markov jump process
in a random environment. 
Given the starting point $o\in V$, the random environment can be described by 
random variables $\beta=(\beta_x)_{x\in V}$, $\beta_x>0$, having a joint 
law $\rho_o$, introduced in Definition \ref{def:rho-o}, below. 
We realize $\beta$ as canonical process (identity map) on $\R_+^V$. There are random 
variables $u_{o,x}\in\R$, $x\in V$, defined in \eqref{eq:def-u-o-x} below, 
which are functions of $\beta$, 
fulfill the normalization $u_{o,o}=0$, and 
\begin{align}
\label{eq:beta-x-in-terms-of-u}
\beta_x=\frac12\sum_{y\in V}C_{xy}e^{u_{o,y}-u_{o,x}}
\qquad\rho_o\text{-a.s.\ for all }x\in V.
\end{align}
The reason is explained in Remark \ref{rem:beta-in-terms-of-u}, below.
In a fixed environment, the Markov jump process has jump rates 
$\frac12C_{xy}e^{u_{o,y}-u_{o,x}}$ from $x$ to~$y$. Consequently,
$\beta_x$ can be interpreted as the total jump rate away from $x$. 
Although the jump rates are given solely in terms of the variables $u_{o,x}$,
it is still convenient to view the family $\beta$ of total jump rates 
as the basic object because of a
coupling needed in Section~\ref{sec:constr-random-env}. 

Let us describe the Markov jump process in formulas. 
Given a value of the environment~$\beta$, a starting point $z\in V$ (which 
equals $o$ in most cases but not always), and the corresponding 
$u_{o,\cdot}=u_{o,\cdot}(\beta)$, we define a probability law 
$Q_{z,\beta}^G$ on $V^{\N_0}\times\R_+^{\N_0}$ with canonical process
$(w,l)$, encoding a nearest-neighbor
continuous time Markov jump process on $G$ with conductances $C$ 
by the following requirements:
$w(0)=z$ holds 
$Q_{z,\beta}^G$-a.s., and for any $k\in\N_0$, conditionally on
$(w(k'))_{0\le k'\le k}$ and $(l(k'))_{0\le k'<k}$, the joint law of 
$w(k+1)$ and $l(k)$ is characterized by 
\begin{align}
\label{def:Q-o-beta-G}
& Q_{z,\beta}^G(w(k+1)=x,\,l(k)>\ell
\,|\,(w(k'))_{0\le k'\le k},\,(l(k'))_{0\le k'<k})\nonumber\\
=& \frac{C_{xw(k)}e^{u_{o,x}}1_{\{\{x,w(k)\}\in E\}}}{\sum_{y\in V}C_{yw(k)}e^{u_{o,y}}}
\exp\left(-\ell\beta_{w(k)}\right)\nonumber\\
=& \frac{C_{xw(k)}e^{u_{o,x}-u_{o,w(k)}}1_{\{\{x,w(k)\}\in E\}}}{2\beta_{w(k)}}
\exp\left(-\ell\beta_{w(k)}\right). 
\end{align}

Of course, the measures $Q_{z,\beta}^G$, $\rho_o$, and some other objects introduced below
depend also on the choice of the weights $C$. However, this is not displayed
in the notation, as we consider $C$ to be fixed.

\begin{fact}[Variant of Theorem 1 (iii) \cite{sabot-zeng15}]
\label{fact:ST2012-mixture}
Let $P_o$ denote the law of the VRJP $(Z_t)_{t\ge 0}$ in exchangeable 
time scale 
on the infinite graph $G$ encoded as $\hat w=(w,l)$ with starting 
point $o$. There exists a probability measure $\rho_o$ on $\R_+^V$ 
such that for any event $A\subseteq V^{\N_0}\times\R_+^{\N_0}$ one has 
\begin{align}
\label{eq:claim-ST2012-mixture-infinite}
P_o(A)=\int_{\R^V_+} Q_{o,\beta}^G(A)\,\rho_o(d\beta).
\end{align}
More specifically, the probability measure $\rho_o$ on $\R_+^V$ introduced
in Definition \ref{def:rho-o} below fulfills this requirement. 
\end{fact}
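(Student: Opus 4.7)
The plan is to deduce the fact from Theorem~1(iii) of \cite{sabot-zeng15} by translating the mixing representation there into the discrete-time-plus-waiting-times encoding $\hat w=(w,l)$ used here. Three steps are involved: invoking the Sabot-Zeng theorem to identify the quenched process, verifying that its joint law of next destination and holding time matches the explicit formula \eqref{def:Q-o-beta-G}, and integrating over the mixing measure.

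First I would quote Theorem~1(iii) of \cite{sabot-zeng15}: under a measure $\rho_o$ on the environment (conveniently parametrized either by $\beta$ or by $u_{o,\cdot}$, related through \eqref{eq:beta-x-in-terms-of-u}), the process $(Z_t)_{t\ge 0}$ in exchangeable time scale starting at $o$ is, conditionally on the environment, a continuous-time Markov jump process on $V$ with time-homogeneous rates
\begin{equation*}
q(x,y)=\tfrac12 C_{xy}\,e^{u_{o,y}-u_{o,x}},\qquad x,y\in V.
\end{equation*}
By \eqref{eq:beta-x-in-terms-of-u} the total rate out of $x$ equals $\beta_x$.

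Next I would translate this quenched description into the $(w,l)$-encoding via the standard strong Markov property at the successive jump times: conditionally on $(w(k'))_{0\le k'\le k}$ and $(l(k'))_{0\le k'<k}$, the waiting time $l(k)$ at $w(k)$ and the next destination $w(k+1)$ are independent, $l(k)$ is exponential with parameter $\beta_{w(k)}$, which supplies the factor $\exp(-\ell\beta_{w(k)})$, and $w(k+1)=x$ with probability $q(w(k),x)/\beta_{w(k)}=C_{xw(k)}e^{u_{o,x}-u_{o,w(k)}}/(2\beta_{w(k)})$. This yields the last line of \eqref{def:Q-o-beta-G}; the equivalent middle form follows by multiplying numerator and denominator by $e^{u_{o,w(k)}}$ and re-applying \eqref{eq:beta-x-in-terms-of-u}. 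The quenched law is therefore $Q_{o,\beta}^G$, and integration against $\rho_o$ (Fubini) produces \eqref{eq:claim-ST2012-mixture-infinite}.

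The only real obstacle is matching the mixing measure produced in \cite{sabot-zeng15} with the specific $\rho_o$ of Definition~\ref{def:rho-o}. Since Definition~\ref{def:rho-o} is tailored precisely to reproduce the Sabot-Zeng construction (including the parametrization via $u_{o,\cdot}$ satisfying \eqref{eq:beta-x-in-terms-of-u}), this identification is essentially by design; this is also why the statement is labeled a \emph{variant} rather than a new theorem.
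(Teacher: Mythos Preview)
Your first two steps are fine: the Sabot--Zeng theorem does give a mixture representation of the infinite-volume VRJP, and translating between the continuous-time jump process and the $(w,l)$-encoding is routine. The gap is in your third step. You write that the identification of the Sabot--Zeng mixing measure with the paper's $\rho_o$ is ``essentially by design,'' but the paper explicitly warns, immediately after the statement of Fact~\ref{fact:ST2012-mixture}, that the mixing measure appearing in \cite{sabot-zeng15} is $\nu_V^C$ and that $\rho_o$ is \emph{not} an infinite-volume version of $\nu_V^C$. The measure $\rho_o$ of Definition~\ref{def:rho-o} is built by Kolmogorov extension from the tilted finite-volume measures $\rho_o^n=e^{u_o^{(n)}}\,d\rho_\infty|_{\F_n}$, with reference point $o$ rather than the wiring point $\delta_n$; matching this object to whatever \cite{sabot-zeng15} produces is not automatic and is precisely one of the points the paper is careful about (see the comparison paragraph at the end of Section~\ref{sec:intro-vrjp} and Section~\ref{sec:coupling-gamma}). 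Dismissing this as ``by design'' is where the argument fails.

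The paper's own proof sidesteps this identification entirely. Rather than quoting the infinite-volume result of \cite{sabot-zeng15} and then comparing measures, it uses the finite-volume mixture representation $P_o^n(A)=\int Q_{o,\beta}^{G_n}(A)\,\rho_o(d\beta)$ from Lemma~\ref{le:representation-mu-o-n} (crucially, with the \emph{same} $\rho_o$ for every $n$), writes both sides explicitly as sums over finite path sets for cylinder functionals $F=f(\hat w|_{[0,J]})$, and passes to the limit $n\to\infty$ by dominated convergence, using $u_{o,x}^{(n)}\to u_{o,x}$ $\rho_o$-a.s.\ from Lemma~\ref{le:lim-u-n-finite}. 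This yields \eqref{eq:claim-ST2012-mixture-infinite} directly for the specific $\rho_o$, without ever needing to compare it to the mixing measure of \cite{sabot-zeng15}.
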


The fact that VRJP on infinite graphs is a mixture of Markov jump processes 
was stated in \cite{sabot-zeng15} using the law $\nu_V^C$ on $\beta$ given in 
Section 4 in \cite{sabot-tarres-zeng2017}. However, for our construction 
it is essential to use the law $\rho_o$ defined in Definition 
\ref{def:rho-o} below. We remark that $\rho_o$ is \emph{not} an infinite volume 
version of $\nu_V^C$.

A representation similar to Fact \ref{fact:ST2012-mixture} holds for 
VRJP on finite subgraphs $G_n$ of $G$ with wired boundary conditions with 
the \textit{same} mixing measure $\rho_o$ as is shown in Lemma 
\ref{le:representation-mu-o-n} below. However, the 
laws of the transition probabilities on $G_n$ and on $G$ differ, because 
the transition probabilities on $G_n$ are given in terms of random 
variables $u_{o,x}^{(n)}$ given in formula \eqref{eq:def-u} below, in general 
not equal to $u_{o,x}$.

\paragraph{Comparison of the present approach to the approach in 
\cite{sabot-zeng15}.}
For the following three reasons, the construction from 
\cite{sabot-zeng15} cannot be used directly to provide a consistent 
measure on random interlacements. 
\begin{enumerate}
\item When one uses the infinite volume representation from \cite{sabot-zeng15} 
and then constructs a random interlacement directly given a fixed 
environment, the object thus obtained is not tractable in terms of 
finite volume approximations of VRJP. This makes it difficult to observe 
the properties of that object, in particular its reinforced behavior. 
\item The random environment for the VRJP 
started at the wiring point $\delta_n$ of a finite subgraph $G_n$ of $G$ 
with wired boundary conditions is described by random variables 
$\psi^{(n)}(x)$ introduced in Lemma 2 in \cite{sabot-zeng15}; see also
\eqref{eq:def-psi-n} below. Uniform integrability of
$\psi^{(n)}(x)$, $n\in\N$, is unfortunately unknown, see for instance 
section 2.6 in \cite{sabot-zeng15}. 
Therefore, without a solution of this open 
problem, there is no direct way to start the random interlacement process 
associated to VRJP at infinity. 
\item In \cite{sabot-zeng15}, 
the random environment for the VRJP started at the wiring point 
$\delta_n$ of $G_n$ needs an additional gamma variable 
$\gamma_{\delta_n}$ associated to $\delta_n$; see
formulas (2.4) and (4.2) in \cite{sabot-zeng15}. As $G_n$ 
increases towards $G$, it is unclear how to couple the variables 
$\gamma_{\delta_n}$, $n\in\N$. 
\end{enumerate}
In the present paper, we go around these problems by the following approach.
We start VRJP on a finite graph at the given vertex $o$ rather than 
the wiring points $\delta_n$. We don't use $\gamma_{\delta_n}$, but we 
associate a \textit{single} gamma 
variable $\gamma_o$, not depending on $n$, to $o$ rather than to 
$\delta_n$; more details 
are given in Section \ref{sec:coupling-gamma} below. Our approach 
then involves a Radon-Nikodym derivative modifying the random environment 
measure. It is explained in Section \ref{sec:constr-random-env}.

\subsection{Random interlacements}
\label{sec:history-interlacement}

In order to describe random interlacements associated to VRJP, 
we are interested in Markovian random interlacements in random 
environments. However, to start with, we describe the theory in 
a fixed environment first. It is closely linked to the work 
of Sznitman: Random interlacements were introduced 
for simple random walks in $\Z^d$, $d\ge 3$, by Sznitman in 
\cite{sznitman2010}. In \cite{Teixeira2009},  Teixeira generalized the 
notion of random interlacements to transient random walks on 
weighted graphs. Sznitman \cite{sznitman2012} considered 
random interlacements associated to transient continuous-time jump
processes on weighted graphs. In this paper, we need a variant of 
this construction, including an initial piece of the jump process 
starting at a given point rather than starting at infinity. 
Another difference to the classical theory of random interlacements
is that the law of the transition probabilities for VRJP on 
finite subgraphs of $G$, viewed as a mixture of Markov jump processes,  
depends on the size of the finite subgraph. 
For an introduction to random interlacements see the textbook 
\cite{Drewitz-Rath-Sapozhnikov2014} by Drewitz, R\'ath, and 
Sapozhnikov. 

\paragraph{Introduction of random interlacement with
initial path.}
One ingredient for the present paper are Markovian random interlacements 
in continuous time in a random environment encoded by $\beta$ as above. 
For the moment, let us take $\beta\in\R_+^V$ fixed such that 
$u_{o,\cdot}=u_{o,\cdot}(\beta)$ fulfilling the equality in 
\eqref{eq:beta-x-in-terms-of-u} exists and such that the Markovian
jump process with law $Q_{o,\beta}^G$ is transient. 

In the following ``path'' means nearest-neighbor path. For $I\subseteq\Z$, 
we define the set of paths in $G$ indexed by $I$ which visit every 
vertex at most finitely often: 
\begin{align}
W(I):=&\left\{ \begin{minipage}{11.5cm}
\begin{tabular}{ll}
$(w(k))_{k\in I}\in V^I$: & $\{w(k),w(k+1)\}\in E$ if $\{ k,k+1\}\subseteq I$ \\
& and $|\{k\in I:w(k)=j\}|<\infty$ for all $j\in V$
\end{tabular}
\end{minipage}\right\}.
\end{align}
We introduce the set of paths decorated with waiting times
\begin{align}
\label{eq:hat-W}
\hat W(I):= W(I)\times\R_+^I.
\end{align}
We endow it with its natural $\sigma$-field $\hat\W(I)$.
Typical elements of $\hat W(I)$ are denoted by 
$\hat w=(w,l)=(\hat w(k))_{k\in I}$. We abbreviate
$\hat W^\to:=\hat W(\N_0)$, $\hat W:=\hat W(\Z)$, and use
similar abbreviations
$\hat\W^\to$, $\hat\W$ for the corresponding $\sigma$-fields. 
 
Let $\emptyset\neq K\subseteq V$ be a finite subset. In the spirit of Sznitman
\cite{sznitman2010} \cite{sznitman2012}, we introduce a measure 
$\hat Q_{K,\beta}$ on $(\hat W,\hat\W)$ as follows.
We define the event 
\begin{align}
A_K:=\{ w(0)\in K\text{ and }w(k)\notin K \text{ for all }k>0\}
\subseteq\hat W^\to
\end{align}
that the path $w$ visits $K$ for the last time at index $0$. 
The Markov jump process described by 
$Q^G_{\cdot,\beta}$ is reversible in the sense of Lemma \ref{le:reversibility} in the appendix.  
Motivated by this lemma, we take the unique finite measure $\hat Q_{K,\beta}$ on 
$(\hat W,\hat\W)$ specified by the following 
requirement: For all $x\in V$, $\ell\ge 0$, and $B_1,B_2\in \hat\W(\N)$,
\begin{align} 
\label{eq:def-Q-K}
&\hat Q_{K,\beta}[(\hat w(-n))_{n\in\N}\in B_1,\,
w(0)=x,\, l(0)\ge \ell,\,\hat w|_\N\in B_2]=
\nonumber\\
& \beta_xe^{-\ell\beta_x}e^{2u_{o,x}} 
Q^G_{x,\beta}[\hat w|_\N\in B_1,\, \hat w|_{\N_0}\in A_K]\,
Q^G_{x,\beta}[\hat w|_\N\in B_2]. 
\end{align}
Frequently, we consider elements of $\hat W$ modulo time shifts. 
Therefore we introduce
\begin{align}
\hat W^*:=\hat W/\sim, \text{ where }
\hat w\sim\hat w' \Leftrightarrow \exists m\in\Z\;
\forall k\in\Z: 
\hat w(k)=\hat w'(k+m). 
\end{align}
Let $\pi^*:\hat W\to\hat W^*$ and $\hat\W^*$ respectively denote the 
canonical map and the $\sigma$-field on $\hat W^*$ generated by $\pi^*$. 
We consider the set of equivalence classes of paths which visit a 
finite set $K$: 
\begin{align}
\hat W_K^*=\pi^*[\{(w,l)\in\hat W:\, w(0)\in K\}]. 
\label{eq:def-W-K-star}
\end{align}

The next theorem, proven in the appendix, provides the intensity measure for 
the Poisson 
point process of random interlacements in a fixed environment encoded by 
$\beta$.

\begin{theorem}[Intensity measure]
\label{thm:intensity-measure}
There exists a unique measure $\hat\nu_\beta$ on $(\hat W^*,\hat\W^*)$
such that for any finite $K\subseteq V$, one has 
\begin{align}
\label{eq:vu-on-hat-W-K}
1_{\hat W_K^*}\hat\nu_\beta=\pi^*[\hat Q_{K,\beta}].
\end{align}
It is $\sigma$-finite and it is given by 
\begin{align}
\hat\nu_\beta(A)=\sup_{K\subset V\text{\rm finite}}\pi^*[\hat Q_{K,\beta}](A)
\quad\text{for all }A\in\hat\W^*.
\end{align}
The measure $\hat\nu_\beta$ is not the measure zero: for all finite 
$\emptyset\neq K\subset V$ one has 
\begin{align}
\label{eq:non-triviality-nu-beta}
0<\hat\nu_\beta(\hat W^*_K)=\pi^*[\hat Q_{K,\beta}](\hat W_K^*)<\infty.
\end{align}
\end{theorem}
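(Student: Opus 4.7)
The plan follows the classical Sznitman construction, adapted to the non-symmetric Markov jump process $Q^G_{\cdot,\beta}$. I would proceed in three steps: (i)~verify that each $\pi^*[\hat Q_{K,\beta}]$ is a finite, nontrivial measure on $\hat W_K^*$; (ii)~establish the compatibility $1_{\hat W_K^*}\pi^*[\hat Q_{K',\beta}] = \pi^*[\hat Q_{K,\beta}]$ for finite $K \subseteq K' \subseteq V$; and (iii)~assemble $\hat\nu_\beta$ by exhausting $V$ with an increasing sequence of finite subsets. For (i), integrating $\ell$ out of \eqref{eq:def-Q-K} yields total mass $\hat Q_{K,\beta}(\hat W) = \sum_{x\in K} e^{2u_{o,x}}\,Q^G_{x,\beta}[A_K]$, which is finite because $K$ is finite. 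Non-triviality \eqref{eq:non-triviality-nu-beta} follows because any finite $K \subsetneq V$ has a boundary vertex $x \in K$ with a neighbor $y \notin K$; transience of $Q^G_{\cdot,\beta}$ then implies that with positive probability the process started at $x$ jumps first to $y$ and never returns to the finite set $K$.

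Step (ii) is the main obstacle and the only place where the specific form of $\hat Q_{K,\beta}$ is essential. A generic path under $\hat Q_{K',\beta}$ has its last visit to $K'$ at time~$0$; if it additionally visits $K \subseteq K'$, let $-m \le 0$ be its last visit to $K$ and let $x$ denote the vertex visited there. Re-indexing time by $+m$ decomposes such a path into three pieces: a past piece from $x$ that avoids $K$ forever, a central finite excursion from $x$ to some $y \in K'$ staying out of $K$ after time~$0$, and a future piece from $y$ that never revisits $K'$. The reduction to the definition \eqref{eq:def-Q-K} then boils down to detailed balance for $Q^G_{\cdot,\beta}$ with respect to the weights $x\mapsto e^{2u_{o,x}}$: a direct computation from \eqref{def:Q-o-beta-G} shows that
\[
e^{2u_{o,x}}\cdot \beta_x \cdot \frac{C_{xy}e^{u_{o,y}-u_{o,x}}}{2\beta_x} = \frac{C_{xy}e^{u_{o,x}+u_{o,y}}}{2}
\]
is symmetric in $x$ and $y$, which is the content of Lemma \ref{le:reversibility}. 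Iterating along the central excursion yields a time-reversal of the segment from $x$ to $y$; summing the reversed excursions over all $y\in K'$ and all intermediate lengths re-assembles exactly the event $A_K$ viewed from $x$. The exponential waiting-time factor $\beta_x e^{-\ell\beta_x}$ at the new reference time is preserved by independence of the forward piece. I expect the careful bookkeeping of this last-exit decomposition, together with the correct handling of the jump times at the two endpoints of the central excursion, to be the delicate part.

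Given (i) and (ii), choose any exhaustion $V = \bigcup_{n\in\N} K_n$ with $K_n\subseteq K_{n+1}$ finite. Since $\bigcup_n \hat W_{K_n}^* = \hat W^*$ and the finite measures $\pi^*[\hat Q_{K_n,\beta}]$ are compatible by (ii), the formula
\[
\hat\nu_\beta(A) := \lim_{n\to\infty} \pi^*[\hat Q_{K_n,\beta}](A\cap \hat W_{K_n}^*),\qquad A\in\hat\W^*,
\]
defines a measure by monotone convergence; it is $\sigma$-finite by (i) and independent of the exhaustion by (ii). Relation \eqref{eq:vu-on-hat-W-K} for arbitrary finite $K$ follows by applying (ii) to $K \subseteq K_n$ for $n$ large. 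Uniqueness is immediate: any other candidate agrees with $\hat\nu_\beta$ on every $\hat W_K^*$, and these sets cover $\hat W^*$. Finally, the supremum representation follows from monotonicity, since $K \mapsto \pi^*[\hat Q_{K,\beta}](A) = \hat\nu_\beta(A \cap \hat W_K^*)$ increases to $\hat\nu_\beta(A)$ along any exhaustion.
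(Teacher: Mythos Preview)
Your overall strategy is exactly the paper's: reversibility (Lemma~\ref{le:reversibility}) feeds a consistency relation (Lemmas~\ref{le:consistency} and the one following it), and then $\hat\nu_\beta$ is assembled along an exhaustion by monotone convergence. However, you have misread definition~\eqref{eq:def-Q-K} in a way that would make your step~(ii) computation fail as written. Under $\hat Q_{K,\beta}$ it is the \emph{backward} part $(\hat w(-n))_{n\ge 0}$ that is conditioned to lie in $A_K$, meaning the path at strictly negative times never meets $K$; thus time~$0$ is the \emph{first} visit to $K$, not the last, and the \emph{forward} part $\hat w|_\N$ is the unconstrained one. Consequently, for $K\subseteq K'$ a path under $\hat Q_{K',\beta}$ that meets $K$ has its first $K$-hit at some time $m\ge 0$ (not $-m\le 0$), and the correct three pieces are: a past from $x'\in K'$ avoiding $K'$, a central excursion from $x'$ to $x\in K$ staying outside $K$, and an unconstrained future from $x$. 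Reversibility is then applied to the central piece to swap $x'$ and $x$, after which the past-plus-central concatenate into the backward part of $\hat Q_{K,\beta}$ via the Markov property; this is precisely Lemma~\ref{le:consistency}. With the orientation corrected, your sketch goes through.

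One minor slip in step~(i): setting $\ell=0$ in~\eqref{eq:def-Q-K} (there is nothing to integrate, since the formula already gives the measure of $\{l(0)\ge\ell\}$) yields total mass $\sum_{x\in K}\beta_x e^{2u_{o,x}}Q^G_{x,\beta}[A_K]$, so the factor $\beta_x$ remains; cf.~\eqref{eq:total-mass-hat-Q-K}.
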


We define a suitable set of point measures, where the individual points 
consist of pairs $(\hat w,t)$ with a doubly infinite path $\hat w$ and 
a time $t>0$:
\begin{align}
\label{eq:def-omega-leftright}
\Omega^{\leftrightarrow}:=\left\{\begin{minipage}{9.7cm} 
$\omega^\leftrightarrow=\sum_{i\in\N}\delta_{(\hat w_i^*,t_i)}$: 
$\hat w_i^*\in\hat W^*$, $t_i>0$, $t_i\neq t_j$ for $i\neq j$, \\
$\omega^\leftrightarrow(\hat W_K^*\times\R_+^0)=\infty$ 
and $\omega^\leftrightarrow(\hat W_K^*\times[0,t])<\infty$ for all\\
finite $\emptyset\neq K\subset V$ and all $t>0$
\end{minipage}\right\}.
\end{align}
This means that we now have two different time lines: \emph{l-times} 
$l=(l(k))_{k\in\N_0}$ on the one hand and \emph{t-times} $t_i$, $t$ on 
the other hand. They should not be confused with each other. Local times at 
vertices in $V$ are always measured in the l-time line. 
Informally speaking, pairs $(t,l)$ should be compared with 
the lexicographic order, with the t-time being the coarser scale and the 
l-time being the finer scale. 

We endow $\Omega^{\leftrightarrow}$ with the $\sigma$-field generated by 
cylinders. Because of \eqref{eq:non-triviality-nu-beta}, there is a 
Poisson point process with a law $\Q_\beta$, realized as canonical process 
on $\Omega^{\leftrightarrow}$, and having the intensity measure 
\begin{align}
\label{eq:def-intensity-measure}
\hat\nu_\beta(d\hat w^*)\times dt.
\end{align}
It describes random point measures over $\hat W^*\times(0,\infty)$. 
Moreover, we introduce the product measure 
\begin{align}
\label{eq:de-P-o-beta-C}
\Q_{o,\beta}:=Q_{o,\beta}^G\times \Q_\beta \quad\text{ on }\quad 
\Omega:=\hat W^\to\times\Omega^\leftrightarrow.
\end{align}
The measure $\Q_{o,\beta}$ is intended to model 
random interlacements with an initial one-sided infinite path starting 
at $o$ and then infinitely many two-sided infinite paths in a given 
environment encoded by $C$ and $\beta$. 

Using $Q_{o,\beta}^G(\hat W^\to)=1$ from transience, we define a probability 
measure $\pstrich_o$ on $\Omega$ by 
\begin{align} 
\pstrich_o(A):=\int_{\R^V_+}\Q_{o,\beta}(A) \,\rho_o(d\beta)
\end{align}
for any measurable set $A$ with the measure $\rho_o$ describing the 
random environment for VRJP as in Fact \ref{fact:ST2012-mixture}. It models 
random interlacements with an initial piece in a random environment. 

Let $\omega=(\omega_\start,\omega^\leftrightarrow)$ be distributed according to 
$\pstrich_o$ with the given $o\in V$. Here ``$\start$'' stands for \emph{start}. 
Then, note that the initial piece 
$\omega_s$ has the same distribution as the trace together with the 
waiting times of a vertex-reinforced jump process in exchangeable time scale
starting in $o$ with weights $C$.

\subsection{Approximation of random interlacements by VRJP}
\label{subsection-approx-of-interlace-by-vrjp}

VRJP on finite graphs is much better understood than on infinite graphs
because of the explicitly known formulas for the random environment described
in \cite{sabot-tarres2012}. Therefore it is natural to compare the random 
interlacements studied in this paper with 
VRJP on finite subgraphs. For this purpose, we consider a finite observation
window $K\subset V$ and an additional $\delta$ ``at infinity''. 
We study the reductions of the processes 
consisting in an infinite speed up of time whenever the process 
is not in $K$. 

We use the notation $[a,b]$ and $(a,b]$ not only for real intervals 
but also for integer ones.

\paragraph{Finite approximations with wired boundary conditions.}
Let $V_n\uparrow V$ be an increasing
sequence of connected subsets of $V$. We take wired boundary conditions
as follows. Let $\delta$ be a new vertex, not contained in $V$.
Let $G_n=(\tilde{V}_n,\tilde{E}_n)$ be the graph with vertex set
$\tilde{V}_n=V_n\cup\{\delta\}$. There are two types of edges in 
$\tilde{E}_n$: First, all edges $\{x,y\}$ in $E$ with $x,y\in V_n$ 
belong to $\tilde{E}_n$ with inherited conductance
$C^{(n)}_{xy}=C_{xy}$. Second, for any $x\in V_n$ 
with $\{y\in V\setminus V_n:\;\{x,y\}\in E\}\neq\emptyset$
there is an edge $\{x,\delta\}\in\tilde{E}_n$ with conductance
$C_{x\delta}^{(n)}=\sum_{y\in V\setminus V_n}C_{xy}$.
For convenience of notation, we set $C^{(n)}_{xy}=0$ if 
$\{x,y\}\notin\tilde E_n$. Let 
\begin{align}
\label{eq:def-W-n-to}
\hat W^\to_n=&\left\{ \begin{minipage}{12.5cm}
$(w(k),l(k))_{k\in\N_0}\in(\tilde V_n\times\R_+)^{\N_0}$:
$\{w(k),w(k+1)\}\in\tilde E_n$ for all $k\in\N_0$
and $|\{k\in\N_0:w(k)=j\}|=\infty$ for all $j\in\tilde V_n$
\end{minipage}\right\}
\end{align}
denote the set of decorated paths in $G_n$ that visit every vertex 
infinitely often. 

Let $K\subseteq V$ be a finite set with $o\in K$ and set 
$\tilde K:=K\cup\{\delta\}$.

\paragraph{$K^+$-reduction on finite graphs.}
We take $n\in\N$ large enough that $K\subseteq V_n$. Let 
$\hat w=(w,l)\in\hat W^\to_n$. By the definition of $W^\to_n$, one has 
$w(k)\in K$ for infinitely many $k$ and $w(k)=\delta$ for infinitely many $k$. 
Consider the subsequence $(w(k_j),l(k_j))_{j\in\N_0}$
of $\hat w$ consisting only of the pairs $(w(k),l(k))$ with $w(k)\in\tilde K$. 
In this subsequence, finitely many (but not infinitely many) consecutive 
$w(k_j)$ may coincide. We unite these consecutive holding pieces as follows.
Recursively, let 
\begin{align}
\label{eq:def-j-m}
j_0:=0\quad\text{ and }\quad
j_{m+1}:=\min\{j>j_m:w(k_j)\neq w(k_{j-1})\}\text{ for }m\in\N_0. 
\end{align}
The $K^+$-reduction $\hat w^K$ of $\hat w$ is defined as follows:
\begin{align}
\label{eq:def-w-super-K}
& \hat w^K=(w^K(m),l^K(m))_{m\in\N_0}\\
\label{eq:def-l-super-K}
\text{ with }\quad &
w^K(m)=w(k_{j_m})\text{ and }l^K(m)=\sum_{j=j_m}^{j_{m+1}-1} l(k_j)1_{\{w(k_j)\neq\delta\}}.
\end{align}
We emphasize that the local time at $\delta$ is not counted in this 
definition. 

With the name $K^+$-reduction we would like to indicate that we observe the 
process not only in $K$, but a little bit more, namely whenever it is at
$\delta$, but not the local time at $\delta$. This is in contrast to 
the $K$-reduction on the infinite graph introduced in the next paragraph, 
where the process is only observed at $K$.

On the finite graph $G_n$, VRJP is recurrent. Hence, it a.s.\ visits the set 
$K$ infinitely often. On the other hand, we assume VRJP to be transient 
on the infinite graph $G$. Hence, it visits $K$ at most finitely often a.s. 
Extending VRJP on the infinite graph by a vertex-reinforced interlacement 
process this difference disappears, making a direct comparison between the 
two reductions possible.

\paragraph{$K$-reduction on the infinite graph.}
Let $\hat w=(w,l)\in\hat W^\to$. If $w$ does not meet the set $K$ we 
define $\hat w^K$ to be the empty list. Else we proceed as follows. 
By the definition of $W^\to$, one has $w(k)\in K$ for at most finitely many $k$, 
say for $J+1$ time points $k$. Similarly to the above, we 
consider the finite subsequence $(w(k_j),l(k_j))_{j\in[0,J]}$
of $\hat w$ consisting only of the pairs $(w(k),l(k))$ with $w(k)\in K$. 
In this subsequence, some consecutive 
$w(k_j)$ may coincide. We unite them as follows.
Recursively, let $j_0:=0$ and 
$j_{m+1}:=\inf\{j\in(j_m,J]:w(k_j)\neq w(k_{j-1})\}$ for $m\in\N_0$.
Let $M\in\N_0$ be the largest $m$ with $j_m<\infty$.  
The $K$-reduction $\hat w^K$ of $\hat w$ is defined by 
\begin{align}
&\hat w^K=(w^K(m),l^K(m))_{m\in[0,M]}\nonumber\\
\text{ with }\qquad &
w^K(m)=w(k_{j_m})\text{ and }l^K(m)=\sum_{j=j_m}^{(j_{m+1}-1)\wedge J} l(k_j).
\end{align}

\paragraph{$K^+$-reduction for interlacements.}
Recall the definition of $\hat W^*_K$ from \eqref{eq:def-W-K-star}. Let 
\begin{align}
\label{eq:typical-omega1}
\omega=& \left(\omega_\start,\omega^\leftrightarrow
=\sum_{i\in\N}\delta_{(\hat w_i^*,t_i)}\right)
\end{align}
be a typical element of $\Omega$, given in \eqref{eq:de-P-o-beta-C}. 
We consider $(\omega_\start,1_{\hat W^*_K\times\R_+^0}\omega^\leftrightarrow)$;
the second component contains only the loops which hit $K$. We write it as
\begin{align}
1_{\hat W^*_K\times\R_+^0}\omega^\leftrightarrow=\sum_{j\in\N}\delta_{(\hat w_{i_j}^*,t_{i_j})}
\end{align}
with $(i_j)_{j\in\N}$ chosen such that $t_{i_j}$ increases with $j$. 
Given the definition of $\Omega^\leftrightarrow$ as in 
\eqref{eq:def-omega-leftright}, this construction works. 

Let $\hat w_{i_j}=(w_{i_j}(k),l_{i_j}(k))_{k\in\Z}$ be the representative of 
$\hat w_{i_j}^*$ with $w_{i_j}(0)\in K$ and $w_{i_j}(k)\notin K$ for $k<0$.  
The $K^+$-reduction $\omega^K$ of the interlacement $\omega$ is defined 
to be the concatenation of 
\begin{align}
\label{eq:def-omega-K-interlacement}
\omega_\start^K\text{ and all }
(\delta,0), (\hat w_{i_j}|_{\N_0})^K
\text{ with $j$ running through } 1,2,3,\ldots 
\end{align}
In other words, we take the part of the initial piece $\omega_\start$ 
running through $K$ and then infinitely many loops around $\delta$ 
obtained from the $K$-reduction of all $\hat w_{i_j}$, with 
holding times at $\delta$ again not being counted.

Let $P^n_o$ denote the law of the vertex-reinforced jump process 
in exchangeable time scale encoded as 
$\hat w=(w,l)=(\hat w(k))_{k\in\N_0}$ on the finite graph $G_n$ with 
weights $C^{(n)}$ and starting point $o$.

\begin{theorem}[Main result: Convergence of $K^+$-reductions]
\label{thm:convergence-interlacement}
Let $K\subset V$ be finite with $o\in K$. The finite-dimensional 
distributions of the 
$K^+$-reduction of VRJP on $G_n$ converge weakly as $n\to\infty$
to the finite-dimensional distributions of the $K^+$-reduction of the 
random interlacement. More precisely, for all $J\in\N$, it holds 
\begin{align}
\cL_{P^n_o}\left(\hat w^K|_{[0,J]}\right)
\stackrel{\text{w}}{\longrightarrow}
\cL_{\pstrich_o}\left(\omega^K|_{[0,J]}\right)
\quad\text{ as }n\to\infty. 
\end{align}
\end{theorem}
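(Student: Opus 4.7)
My strategy is to condition on the random environment $\beta$, exploit a common ``initial piece plus i.i.d.\ loops'' structure on both sides, and match the pieces.

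\emph{Quenched reduction and common structure.} By Fact~\ref{fact:ST2012-mixture} and Lemma~\ref{le:representation-mu-o-n}, both $P_o^n$ and $\pstrich_o$ are $\rho_o$-mixtures of Markov jump processes in a common environment $\beta$. It therefore suffices to prove, for $\rho_o$-a.e.\ $\beta$, that the law of $\hat w^K|_{[0,J]}$ under the Markov jump process on $G_n$ started at $o$ (whose rates use $u^{(n)}_{o,\cdot}$) converges weakly to its law under $\Q_{o,\beta}$. A key input I expect from the construction in Section~\ref{sec:constr-random-env} is the $\rho_o$-a.s.\ pointwise convergence $u^{(n)}_{o,x}\to u_{o,x}$ for every $x\in V$. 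Fix such a $\beta$. By the strong Markov property at successive visits to $\delta$, the trajectory on $G_n$ decomposes into an initial excursion from $o$ to the first visit of $\delta$, followed by an i.i.d.\ sequence of $\delta$-excursions; in the $K^+$-reduction, $\delta$-excursions avoiding $K$ are consolidated into a single $\delta$-letter, while each $K$-hitting $\delta$-excursion contributes its $K$-reduction preceded by a $\delta$. On the interlacement side, $\omega_\start$ and $\omega^\leftrightarrow$ are independent by~\eqref{eq:de-P-o-beta-C}, and by Theorem~\ref{thm:intensity-measure} the restriction of $\omega^\leftrightarrow$ to $\hat W^*_K\times\R_+^0$ is a Poisson point process of finite total intensity $Z_\beta:=\pi^*[\hat Q_{K,\beta}](\hat W^*_K)$, so its atoms ordered by their $t$-coordinates are i.i.d.\ with common law $\pi^*[\hat Q_{K,\beta}]/Z_\beta$. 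Both sides thus share the structure ``independent initial piece plus i.i.d.\ $K$-hitting loops, separated by $\delta$-letters'', and $\hat w^K|_{[0,J]}$ depends on only finitely many loops.

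\emph{Initial-piece convergence.} The initial excursion on $G_n$ is the $u^{(n)}_{o,\cdot}$-tilted Markov chain started at $o$ and stopped at its first $\delta$-visit. By the transience assumption, $\rho_o$-a.s.\ the $K$-reduction of the unstopped chain on $G$ is finite and determined by the trajectory inside some a.s.-finite window. Combined with $V_n\uparrow V$ and the pointwise convergence of the $u^{(n)}_{o,\cdot}$, this yields weak convergence of the $K$-reduction of the $G_n$-initial-excursion to the $K$-reduction of $Q^G_{o,\beta}$, i.e., to $\omega_\start^K$.

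\emph{Single-loop convergence and main obstacle.} The heart of the proof is to show that the law of a single $K$-hitting $\delta$-excursion on $G_n$, viewed as an element of $\hat W^*$ (after discarding the $\delta$-endpoints and quotienting by time-shift) and normalized to a probability measure, converges to $\pi^*[\hat Q_{K,\beta}]/Z_\beta$. I would decompose such an excursion at its first visit to $K$, say at vertex $x$: by strong Markov, conditional on $x$, the post-entrance piece is an independent $u^{(n)}_{o,\cdot}$-Markov chain from $x$ stopped at $\delta$, which converges to the full forward chain $Q^G_{x,\beta}$, matching the second factor in~\eqref{eq:def-Q-K}. The pre-entrance piece from $\delta$ to $x$, viewed in reverse time via the reversibility of Lemma~\ref{le:reversibility}, converges to a forward $u_{o,\cdot}$-Markov chain from $x$ conditioned on~$A_K$, matching the first factor in~\eqref{eq:def-Q-K}. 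The main obstacle is producing the correct prefactor $\beta_x e^{-\ell\beta_x}e^{2u_{o,x}}$ from the asymptotics of the finite-volume escape probability from $x$ to $\delta$ on $G_n$, and checking that the overall normalization converges to $Z_\beta$. Combining with the initial-piece convergence and the shared independence structure then delivers the desired weak convergence of finite-dimensional distributions.
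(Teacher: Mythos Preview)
Your decomposition into ``initial piece plus i.i.d.\ loops'' is structurally parallel to the paper's argument, but you leave the decisive step open. The paper packages the comparison more cleanly: it introduces a \emph{modified} $K^+$-reduction (Lemmas~\ref{le:transition-rates-K-reduction-finite-volume} and~\ref{le:transition-rates-K-reduction}) in which the holding time at $\delta$ is retained---rescaled by $e^{-2u_{o,\delta}^{(n)}}$ in finite volume, taken from the interlacement $t$-coordinate in infinite volume---so that both sides become genuine Markov jump processes on the finite state space $\tilde K$. The whole proof then reduces to showing $\rho_o$-a.s.\ convergence of the finitely many transition rates (Theorem~\ref{thm:infinite-volume-limit-rates}); the unmodified $K^+$-reduction is recovered at the end by forgetting the $\delta$-time.

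The genuine gap in your proposal is that the ``main obstacle'' you correctly name---convergence of the escape probabilities and of the normalization $Z_\beta$---is not addressed, and pointwise convergence $u^{(n)}_{o,x}\to u_{o,x}$ together with transience does \emph{not} suffice. The quantity $e_K^n(x)=e^{2u_{o,x}^{(n)}}Q^{G_n}_{x,\beta}(\hit_\delta<\widetilde\hit_K)$ involves the behaviour of the $G_n$-chain arbitrarily far from $K$, where the $u^{(n)}$ are not controlled pointwise; the same issue afflicts your initial-piece argument, since the length of the $K$-reduction before the first $\delta$-visit is governed by these escape probabilities. The paper resolves this (Lemmas~\ref{le:proba-path-finite-infinite}, \ref{le:proba-path-finite-to-delta}, \ref{eq:le-mg-convergence}) by exploiting the martingale identity $e^{u_{o,y}^{(n)}}=E_{\rho_o}[e^{u_{o,y}}\mid\F_n]$ together with the fact that $u^{(n)}_{o,y}=u^{(n)}_{o,\delta}$ for $y\notin V_n$: one obtains representations such as
\[
e_K^n(x)=e^{u_{o,x}^{(n)}}\,E_{\rho_o}\bigl[\,Q^G_{x,\beta}(\hit_{V\setminus V_n}<\widetilde\hit_K)\,e^{u_{o,x}}\,\bigm|\,\F_n\bigr],
\]
and then a monotone-sequence martingale-convergence argument yields $e_K^n(x)\to e_K(x)$ and $q_K^n(x,y)\to q_K(x,y)$. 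Without this conditional-expectation step your scheme has no mechanism to pass to the limit in the prefactors or the normalization.
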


Intuitively speaking, the theorem means the following. Suppose we 
have a finite observation window $K\times[0,J]$, where $K$ refers to location
and $[0,J]$ refers to the observable number of jumps. On the 
one hand, we observe 
the jumping particle of a VRJP on the finite graph $G_n$ whenever
it is inside $K$ or at $\delta$. On the other hand, we observe 
another particle jumping on $K\cup\{\delta\}$
described by the $K^+$-reduction of the random interlacement. 
One may imagine time to run infinitely fast whenever the particle
is not in $K$ including when it is in $\delta$. Then, according to the theorem, 
as $n\to\infty$, in the chosen space-time window, 
we can hardly see any difference between the jumping particle on the 
finite graph and the jumping particle coming from the interlacement 
process. 

\paragraph{Remark.} 
The random environment for VRJP in an appropriate
time scaling has a Bayesian conjugate prior property: Conditioned on 
an initial piece of the path, the future of the path is distributed 
according to a VRJP with updated weights. We expect this property to 
be inherited to random interlacements. Working this out in detail is 
beyond the scope of this paper. 

\paragraph{How this article is organized.}
In Section \ref{sec:constr-random-env}, we construct the measure $\rho_o$
describing the random environment for VRJP. We prove the representation 
of VRJP as a mixture of Markov jump processes on the infinite $G$
stated in Fact \ref{fact:ST2012-mixture} using an analogous representation 
on finite approximating subgraphs $G_n$ of $G$ with the \textit{same} measure 
$\rho_o$; see Lemma \ref{le:representation-mu-o-n}. This construction
uses a martingale discovered by Sabot and Zeng \cite{sabot-zeng15}. 
Section \ref{sec:coupling-gamma} describes the connection between 
the representation of VRJP as a mixture of Markov jump processes 
given by Sabot, Tarr\`es, and Zeng in \cite{sabot-tarres-zeng2017} 
and the measure $\rho_o$.

In Section \ref{sec:proof-main-thm}, we study VRJP and the random interlacement
reduced to a finite observation window. We describe the transition rates 
of these different $K^+$-reductions and prove convergence of the 
rates for the $K^+$-reduction of VRJP on $G_n$ to the 
corresponding rates for the $K^+$-reduction of the random interlacement. 
This yields a proof of our main theorem~\ref{thm:convergence-interlacement}. 

To make the paper more self-contained, we provide a proof of Theorem 
\ref{thm:intensity-measure} in Appendix~\ref{sec:appendix-ppp}.

\section{Construction of random environments for VRJP}
\label{sec:constr-random-env}

\subsection{The random environment associated to a fixed reference vertex}

The mixing measure on the infinite graph $G=(V,E)$ is constructed through
finite volume approximations. Let $G_n$, $n\in\N$, be approximating finite
subgraphs as in Section \ref{subsection-approx-of-interlace-by-vrjp}. 
 
It was shown by Sabot and Tarr\`es in \cite{sabot-tarres2012} that 
the mixing measure for VRJP on $G_n$ can be described in terms of the 
supersymmetric
hyperbolic nonlinear sigma-model $H^{2|2}$ in horospherical coordinates, studied
in \cite{disertori-spencer-zirnbauer2010}.
We define it here through an alternative random Schr\"odinger operator 
construction
given in \cite{sabot-tarres-zeng2017} and \cite{sabot-zeng15}:
There is a probability measure $\rho_\infty$ on $\R_+^V$, depending on the 
graph $G$ and the conductances $C$, with Laplace transform
\begin{align}
\label{eq:laplace-rho-infty}
\int_{\R_+^V}e^{-\sk{\lambda,\beta}}\,\rho_\infty(d\beta)
=&
\exp\left(-\sum_{x,y\in V} C_{xy}\left(\sqrt{(\lambda_x+1)(\lambda_y+1)}-1\right)
\right)\prod_{x\in V}\frac{1}{\sqrt{\lambda_x+1}}
\end{align}
for all $(\lambda_x)_{x\in V}\in(-1,\infty)^V$ having only finitely many nonzero
entries, where we define $\sk{\lambda,\beta}=\sum_{x\in V}\lambda_x\beta_x$;
see proposition 1 in \cite{sabot-tarres-zeng2017} (see also theorem 2.1 in 
\cite{disertori-merkl-rolles2017}) and the Kolmogorov extension theorem
construction used in lemma 1 in \cite{sabot-zeng15}. 
Given $\beta\in\R_+^V$,
let 
\begin{align}
\cH_\beta\in\R^{V\times V},\quad
(\cH_\beta)_{xy}=2\beta_x1_{\{x=y\}}-C_{xy};
\end{align}
recall the convention $C_{xy}=0$ if $\{x,y\}$ is not an edge in $G$.
For any $n\in\N$, given the  finite subset $V_n\subset V$, we introduce 
the restriction $\cH_\beta^{(n)}=((\cH_\beta)_{xy})_{x,y\in V_n}$. Let
\begin{align}
\label{eq:def-B}
B=\{\beta\in\R_+^V:\cH_\beta^{(n)}\in\R^{V_n\times V_n}
\text{ is positive definite for all }n\}.
\end{align}
Note that $\rho_\infty$-a.e.\ $\beta$ belongs to $B$ by definition 1 and 
proposition 1 in \cite{sabot-tarres-zeng2017}. 
For any $\beta$ such that $\cH_\beta^{(n)}$ is positive definite, the vector 
$(\psi^{(n)}(x))_{x\in V_n}$ and its component-wise logarithm 
$(u^{(n)}_x)_{x\in \tilde{V}_n}$
are defined by 
\begin{align}
\label{eq:def-psi-n}
\psi^{(n)}(\delta)=e^{u^{(n)}_\delta}:=1,\quad
(\psi^{(n)}(x))_{x\in V_n}=(e^{u^{(n)}_x})_{x\in V_n}
:=(\cH_\beta^{(n)})^{-1} C_{V_n\delta}^{(n)}
\end{align}
where $C_{V_n\delta}^{(n)}=(C_{x\delta}^{(n)})_{x\in V_n}$;
indeed all entries in $(\cH_\beta^{(n)})^{-1}$ are strictly positive,
as was shown in proposition 2 in \cite{sabot-tarres-zeng2017}, 
which allows us to take the logarithms to define $u^{(n)}$.
If $\cH_\beta^{(n)}$ is not positive definite, we set $u^{(n)}_x=0$
for $x\in\tilde V_n$. We also set 
$u_x^{(n)}=0$ for all $x\in V\setminus V_n$. For $x\in V\cup\{\delta\}$
and the fixed vertex $o\in V$, we define 
\begin{align}
\label{eq:def-u}
u_{o,x}^{(n)}=u_x^{(n)}-u_o^{(n)}. 
\end{align}
In particular, $u_{o,o}^{(n)}=0$. 
Note that for $x\in V_n$ formula \eqref{eq:def-psi-n} implies 
\begin{align}
\label{eq:beta-finite-volume}
\beta_x=\frac{1}{2}\sum_{y\in\tilde V_n}C_{xy}^{(n)}e^{u_y^{(n)}-u_x^{(n)}}
=\frac{1}{2}\sum_{y\in\tilde V_n}C_{xy}^{(n)}e^{u_{o,y}^{(n)}-u_{o,x}^{(n)}}.
\end{align}
For any given $n$, we extend 
$\beta$ to be also defined at $\delta\in\tilde V_n$ by 
\begin{align}
\label{eq:def-beta-delta}
\beta_\delta:=\beta_\delta^{\new,n}
:=\frac12\sum_{y\in\tilde V_n}C_{\delta y}^{(n)} e^{u_y^{(n)}}
=\frac12\sum_{y\in \tilde V_n}C_{\delta y}^{(n)}
e^{u_{o,y}^{(n)}-u_{o,\delta}^{(n)}}.
\end{align}
The dependence of $\beta_\delta$ on $n$ is not displayed in the notation. 
We remark that this quantity is called $\tilde\beta_\delta$ in 
\cite{sabot-tarres-zeng2017}; it does \emph{not} coincide with what is called 
$\beta_\delta$ there. 

Consider a nearest-neighbor continuous-time 
Markov jump process on the finite graph $G_n$ endowed with the weights 
$C^{(n)}$ defined in analogy to \eqref{def:Q-o-beta-G} replacing the 
weighted graph $(V,E,C)$ by $(\tilde V_n,\tilde E_n,C^{(n)})$ and 
$u_{o,\cdot}$ by $u_{o,\cdot}^{(n)}$. For a starting point $z\in\tilde V_n$, the 
corresponding probability law $Q_{z,\beta}^{G_n}$ on 
$\tilde V_n^{\N_0}\times\R_+^{\N_0}$ is defined by the requirements that 
$w(0)=z$ holds $Q_{z,\beta}^{G_n}$-a.s., and for any $k\in\N_0$, conditionally on
$(w(k'))_{0\le k'\le k}$ and $(l(k'))_{0\le k'<k}$, the joint law of 
$w(k+1)$ and $l(k)$ is given by 
\begin{align}
\label{def:Q-o-beta-G-n}
& Q_{z,\beta}^{G_n}(w(k+1)=x,\,l(k)>\ell
\;|\;(w(k'))_{0\le k'\le k},\,(l(k'))_{0\le k'<k})\nonumber\\
=& \frac{C_{xw(k)}^{(n)}e^{u_{o,x}^{(n)}}1_{\{\{x,w(k)\}\in\tilde E_n\}}}{\sum_{y\in\tilde V_n}
C^{(n)}_{yw(k)}e^{u^{(n)}_{o,y}}}
\exp\left(-\ell\beta_{w(k)}\right)\nonumber\\
=& \frac{C_{xw(k)}^{(n)}e^{u^{(n)}_{o,x}-u^{(n)}_{o,w(k)}}1_{\{\{x,w(k)\}\in\tilde E_n\}}}{2\beta_{w(k)}}
\exp\left(-\ell\beta_{w(k)}\right),
\end{align}
where we have used the expressions \eqref{eq:beta-finite-volume} and
\eqref{eq:def-beta-delta} for $\beta$ in the last equation. 

On $\R_+^V$, we define $\F_\infty=\sigma(\beta_x,x\in V)$ and the filtration
\begin{align}
\label{eq:def-filtration-F-n}
\F_n=\sigma(\beta_x,x\in V_n), \quad n\in\N.
\end{align}
By \eqref{eq:def-psi-n}, all $u_x^{(n)}$ are $\F_n$-measurable. 
For any vertex $x\in V_n$, we define a measure $\rho_x^n$ on $(\R_+^V,\F_n)$ by 
\begin{align}
\label{eq:def-nu-o-n}
d\rho_x^n=e^{u_x^{(n)}}\, d\rho_\infty|_{\F_n}
\end{align}

Theorem 3(i) in \cite{sabot-tarres-zeng2017} 
shows that VRJP on $G_n$ starting from $\delta$ is a mixture of the laws 
$Q_{\delta,\beta}^{G_n}$ when $\beta$ is drawn randomly with respect to the 
mixing measure $\rho_\infty$. 
The next lemma provides an analogous result for VRJP on $G_n$ starting 
from $o$ rather than from $\delta$: 

\begin{lemma}
\label{le:mixing-measure-on-finite-graph}
VRJP on $G_n$ starting from $o$ is a mixture of the laws $Q_{o,\beta}^{G_n}$
when $\beta$ is drawn randomly with respect to the mixing measure $\rho^n_o$.  
\end{lemma}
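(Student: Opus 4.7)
The plan is to verify $P^n_o(A) = \int Q_{o,\beta}^{G_n}(A)\,\rho_o^n(d\beta)$ on cylinder events by comparing annealed Laplace transforms in the waiting times, reducing to the start-at-$\delta$ representation of Theorem 3(i) in \cite{sabot-tarres-zeng2017}. A preliminary observation is that $\rho_o^n$ is a probability measure, which follows from the Sabot-Zeng martingale identity $\E_{\rho_\infty}[e^{u_o^{(n)}}]=1$ from \cite{sabot-zeng15}; this is also what makes the family $(\rho_o^n)_n$ consistent along the filtration $(\F_n)_n$.

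I would fix $J\in\N$, a path $(x_0=o,x_1,\ldots,x_J)$ in $\tilde V_n$ along edges of $\tilde E_n$, and Laplace parameters $\lambda_0,\ldots,\lambda_{J-1}\geq 0$. From \eqref{def:Q-o-beta-G-n}, the waiting times are conditionally independent exponentials of rate $\beta_{x_k}$ under $Q_{o,\beta}^{G_n}$, so telescoping the $u^{(n)}$-factors (using $u_{o,o}^{(n)}=0$) yields
\begin{align*}
\E_{Q_{o,\beta}^{G_n}}\Bigl[e^{-\sum_k\lambda_k l(k)}\mathbf{1}\{w(k)=x_k\,\forall k\}\Bigr]
=\prod_{k=0}^{J-1}\frac{C^{(n)}_{x_k x_{k+1}}}{2}\cdot e^{u_{x_J}^{(n)}-u_o^{(n)}}\cdot\prod_{k=0}^{J-1}\frac{\beta_{x_k}}{\beta_{x_k}+\lambda_k}.
\end{align*}
Multiplying by the Radon-Nikodym derivative $e^{u_o^{(n)}}$ from \eqref{eq:def-nu-o-n} and integrating against $\rho_\infty|_{\F_n}$ collapses the $u_o^{(n)}$-factors to produce the candidate right-hand side
\begin{align*}
\prod_{k=0}^{J-1}\frac{C^{(n)}_{x_k x_{k+1}}}{2}\int_{\R_+^V}e^{u_{x_J}^{(n)}}\prod_{k=0}^{J-1}\frac{\beta_{x_k}}{\beta_{x_k}+\lambda_k}\,\rho_\infty(d\beta).
\end{align*}

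For the left-hand side I would compute the annealed Laplace transform of the same cylinder under $P^n_o$ directly from the explicit joint density of VRJP in exchangeable time scale on the finite graph $G_n$ given in \cite{sabot-tarres2012}. That density factors as $\prod_k C^{(n)}_{x_k x_{k+1}}/2$ times a function depending only on the local-time profile of the visited vertices plus a boundary factor at the starting vertex, and carrying out the same calculation with starting vertex $\delta$ in place of $o$ is exactly what proves Theorem 3(i) in \cite{sabot-tarres-zeng2017}: it produces the same integral against $\rho_\infty$ as above but with $u_\delta^{(n)}=0$ replacing $u_o^{(n)}$. The discrepancy between the two outputs is a single boundary factor at the starting vertex, and this factor is exactly what the Radon-Nikodym derivative $e^{u_o^{(n)}}$ between $\rho_o^n$ and $\rho_\infty$ cancels; a standard monotone class argument then extends the cylinder identity to arbitrary measurable events. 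The main obstacle is this precise bookkeeping: one must track how the starting-vertex dependence of the VRJP density interacts with the choice of reference vertex $o$ in $u_{o,\cdot}^{(n)}$ and verify that these two effects combine to exactly $e^{u_o^{(n)}}$. The cleanest route is probably to rerun the proof of Theorem 3(i) in \cite{sabot-tarres-zeng2017} with starting vertex $o$, using the Laplace-transform identity \eqref{eq:laplace-rho-infty} for $\rho_\infty$ but with a shifted argument producing $e^{u_o^{(n)}}$ as the only new prefactor.
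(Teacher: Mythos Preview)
Your approach is correct and lands in essentially the same place as the paper's proof, but by a more computational route. The paper argues structurally: (i) the law of $u^{(n)}$ under $\rho_\infty$ is the $H^{2|2}$ measure pinned at $\delta$ (theorem~2 of \cite{sabot-tarres-zeng2017}); (ii) the known change-of-reference-point formula for $H^{2|2}$ (theorem~2 and section~6 of \cite{sabot-tarres2012}) says that passing from reference $\delta$ to reference $o$ multiplies the measure by $e^{u_o^{(n)}}$ and replaces $u^{(n)}$ by $u_{o,\cdot}^{(n)}$; (iii) hence $\rho_o^n$ is exactly the $H^{2|2}$ measure pinned at $o$, and Theorem~3(i) of \cite{sabot-tarres-zeng2017} applied with starting vertex $o$ gives the claim. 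Your Laplace-transform computation is an explicit re-derivation of step~(ii): the telescoping of the $u$-factors together with the Radon--Nikodym density $e^{u_o^{(n)}}$ is precisely the change-of-reference-point identity, so your ``cleanest route'' at the end---rerunning Theorem~3(i) with starting vertex $o$---is exactly what the paper does, only the paper cites the relevant $H^{2|2}$ fact rather than recomputing it.

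One minor algebraic slip: in your displayed quenched Laplace transform, the factor $\beta_{x_k}$ in the numerator of $\beta_{x_k}/(\beta_{x_k}+\lambda_k)$ is already cancelled by the $1/(2\beta_{x_k})$ in the transition probability \eqref{def:Q-o-beta-G-n}, so the correct product is $\prod_{k=0}^{J-1}\frac{1}{\beta_{x_k}+\lambda_k}$. This does not affect the argument.
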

\begin{proof}
Formula (3) in theorem 2 of \cite{sabot-tarres-zeng2017}
shows that the distribution of $u^{(n)}$ with respect to $\rho_\infty$ equals the 
distribution of the supersymmetric hyperbolic nonlinear sigma model
introduced first in formulas (1.2) and (1.5) of 
\cite{disertori-spencer-zirnbauer2010}. Note that in that paper, the 
point $\delta$ is not explicitly mentioned. The pinning strengths
$\varepsilon_x$ of that paper correspond to the weights $C_{x\delta}^{(n)}$. 

The effect of changing the reference point in the $H^{2|2}$-model on $G_n$ 
from $\delta$ to $o$ consists of two steps: First the underlying measure, 
here $\rho_\infty|_{\F_n}$, 
gets an additional Radon-Nikodym-derivative $e^{u^{(n)}_o}$. Second, the 
transformation $u^{(n)}\mapsto u_{o,\cdot}^{(n)}$ given in \eqref{eq:def-u} 
changes the normalization from $u^{(n)}_\delta=0$ to $u^{(n)}_{o,o}=0$; cf.\ 
theorem 2 and section 6 of \cite{sabot-tarres2012}. Using Theorem 3(i) in 
\cite{sabot-tarres-zeng2017} again, this time with starting point $o$ 
rather than $\delta$, the claim follows. 
\end{proof}

The most important case for the vertex $x$ in the following lemma 
is $x=o$. 
\begin{lemma}
\label{le:nu-o-n-consistent}
For any vertex $x\in V$, the collection $(\rho_x^n)_{n\in\N}$ is a consistent family of probability
measures, i.e.\ $\rho_x^{n+1}|_{\F_n}=\rho_x^n$ for all $n\in\N$. 
\end{lemma}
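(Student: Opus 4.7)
The claim $\rho_x^{n+1}|_{\F_n} = \rho_x^n$ is a statement about Radon-Nikodym derivatives: by the definition \eqref{eq:def-nu-o-n} and the tower property, it is equivalent to
\begin{align}
\E_{\rho_\infty}\!\left[e^{u_x^{(n+1)}}\,\Big|\,\F_n\right] = e^{u_x^{(n)}} \qquad \rho_\infty\text{-a.s.}
\end{align}
Thus the plan is to reduce the lemma to this martingale identity for the random variables $(\psi^{(n)}(x))_n = (e^{u_x^{(n)}})_n$ under the reference measure $\rho_\infty$, with respect to the filtration $(\F_n)_n$ defined in \eqref{eq:def-filtration-F-n}.

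First I would check the trivial case $x \notin V_n$: then $u_x^{(n)} = 0$, so the claim reduces to $\E_{\rho_\infty}[e^{u_x^{(n+1)}}\,|\,\F_n] = 1$ on the event where $\cH_\beta^{(n+1)}$ is positive definite, which fits into the martingale identity with initial value $1$. Second, for the main case $x \in V_n \subset V_{n+1}$, I would invoke the martingale property of $(\psi^{(n)}(x))_n$ under $\rho_\infty$ discovered by Sabot and Zeng in \cite{sabot-zeng15} (see their lemma 2 and the ensuing discussion of $\psi^{(n)}$), which is exactly the content of the Radon-Nikodym identity above. This directly yields $\rho_x^{n+1}|_{\F_n} = \rho_x^n$ for all $n \in \N$.

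Finally, to conclude that each $\rho_x^n$ is a probability measure I would iterate: since $\rho_\infty$ is a probability measure, $\rho_x^n|_{\F_{n-1}} = \rho_x^{n-1}$, and on the initial segment of $n$ where $x \notin V_n$ one has $\rho_x^n = \rho_\infty|_{\F_n}$, which has total mass $1$. Propagating upward via the consistency relation just established gives $\rho_x^n(\R_+^V) = 1$ for all $n$.

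The only genuine difficulty is the martingale property itself, which is however not proved here but imported from \cite{sabot-zeng15}; the combinatorial/analytic core is a Schur-complement-type identity for $(\cH_\beta^{(n)})^{-1}$ together with the specific form \eqref{eq:laplace-rho-infty} of the Laplace transform of $\rho_\infty$. Once this black-box is in place, the lemma is essentially a tautological translation between Radon-Nikodym derivatives and conditional expectations.
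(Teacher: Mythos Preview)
Your approach is essentially the same as the paper's: both reduce consistency to the martingale identity $E_{\rho_\infty}[e^{u_x^{(n+1)}}\mid\F_n]=e^{u_x^{(n)}}$ and import this as a black box from \cite{sabot-zeng15}. Two minor points worth flagging. First, the precise reference in \cite{sabot-zeng15} is Proposition~9 (the paper also points to Theorem~2.5 of \cite{disertori-merkl-rolles2017}); Lemma~2 there concerns the definition of $\psi^{(n)}$ rather than the martingale property. Second, the paper obtains $\rho_x^n(\R_+^V)=1$ directly from formula~(5.26) in \cite{disertori-merkl-rolles2017}, whereas your iterative argument for total mass relies on an initial segment of indices with $x\notin V_n$ to anchor the induction; if $x\in V_1$ that segment is empty and you still need $E_{\rho_\infty}[e^{u_x^{(1)}}]=1$ from somewhere. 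This is easily patched (e.g.\ by adjoining a trivial $V_0=\emptyset$, or by citing the same identity the paper uses), but as written the base case is missing.
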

\begin{proof}
By formula (5.26) in \cite{disertori-merkl-rolles2017}, 
\begin{align}
\rho_x^n(\R^V_+)=\int_{\R^V_+}e^{u_x^{(n)}}\, d\rho_\infty=1.
\end{align}
Hence, $\rho_x^n$ is a probability measure. In order to show consistency, 
take an event $A\in\F_n$. We calculate
\begin{align}
\label{eq:int-test-fn-m-n+1}
& \rho_x^{n+1}(A)
=\int_Ae^{u_x^{(n+1)}}\, d\rho_\infty
=\int_AE_{\rho_\infty}\left[\left. e^{u_x^{(n+1)}}\right|\F_n\right]d\rho_\infty.
\end{align}
By Proposition 9 in 
\cite{sabot-zeng15}, $(e^{u_x^{(n)}})_{n\in\N}$ is a martingale with respect 
to $\rho_\infty$ and $(\F_n)_{n\in\N}$; see also Theorem 2.5 of 
\cite{disertori-merkl-rolles2017} 
for a formulation in a notation which is closer to the one used in the 
present paper. This yields $\rho_\infty$-a.s.
\begin{align}
E_{\rho_\infty}\left[\left. e^{u_x^{(n+1)}}\right|\F_n\right]
=e^{u_x^{(n)}}. 
\end{align}
Inserting this in \eqref{eq:int-test-fn-m-n+1} yields the consistency 
as follows:
\begin{align}
\rho_x^{n+1}(A)
=\int_A e^{u_x^{(n)}}\, d\rho_\infty
=\rho_x^n(A).
\end{align}
\end{proof}

\begin{definition}
\label{def:rho-o}
For $x\in V$, let $\rho_x$ denote the unique probability measure on 
$(\R^V_+,\F_\infty)$ with 
restrictions $\rho_x|_{\F_n}=\rho_x^n$ for all $n\in\N$ given by Kolmogorov's 
consistency theorem. 
\end{definition}

For all $o,x\in V$ and $n\in\N$, it follows from \eqref{eq:def-nu-o-n} 
and $u_{o,x}^{(n)}=u_x^{(n)}-u_o^{(n)}$ that 
\begin{align}
\label{eq:radon-nikodym-nu-finite-volume}
\frac{d\rho_x|_{\F_n}}{d\rho_o|_{\F_n}}=
\frac{d\rho_x^n}{d\rho_o^n}=e^{u_{o,x}^{(n)}}.
\end{align}
Recall that $\rho_\infty$ is supported on the set $B$ defined in \eqref{eq:def-B}
so that $\rho_o$ is also supported on the same set $B$.
Indeed, for any fixed $n$, the restriction $\rho_o|_{\F_n}$ is absolutely 
continuous with respect to $\rho_\infty|_{\F_n}$. 

\begin{lemma}
\label{le:mgs-wrt-mu-o}
For all $o,x\in V$, the process $\left(e^{u_{o,x}^{(n)}}\right)_{n\in\N}$ is
a martingale with respect to 
the filtration $(\F_n)_{n\in\N}$ and the measure $\rho_o$. It fulfills 
$E_{\rho_o}[e^{u_{o,x}^{(n)}}]=1$. 
\end{lemma}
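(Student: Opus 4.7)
The plan is to reduce everything to the martingale property of $(e^{u_x^{(n)}})_{n\in\N}$ under $\rho_\infty$, which is already cited (Proposition 9 of \cite{sabot-zeng15}), via a Bayes-type change-of-measure argument using the Radon-Nikodym derivative $d\rho_o|_{\F_n}/d\rho_\infty|_{\F_n} = e^{u_o^{(n)}}$ from \eqref{eq:def-nu-o-n}.

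First, for the mean, I would compute directly:
\begin{align*}
E_{\rho_o}[e^{u_{o,x}^{(n)}}] = \int e^{u_x^{(n)}-u_o^{(n)}}\,d\rho_o^n = \int e^{u_x^{(n)}-u_o^{(n)}}e^{u_o^{(n)}}\,d\rho_\infty = \int e^{u_x^{(n)}}\,d\rho_\infty,
\end{align*}
which equals $1$ by the normalization $\rho_x^n(\R_+^V)=1$ established in the proof of Lemma \ref{le:nu-o-n-consistent} (itself a consequence of the $\rho_\infty$-martingale property). This also shows that $e^{u_{o,x}^{(n)}}$ is $\rho_o$-integrable, which is the prerequisite for the martingale claim.

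For the martingale property, I would fix $n$ and use the standard Bayes formula for conditional expectations under a change of measure: for any $\F_{n+1}$-measurable non-negative $Y$,
\begin{align*}
E_{\rho_o}[Y\mid \F_n] = \frac{E_{\rho_\infty}[Y\,e^{u_o^{(n+1)}}\mid \F_n]}{E_{\rho_\infty}[e^{u_o^{(n+1)}}\mid \F_n]} = \frac{E_{\rho_\infty}[Y\,e^{u_o^{(n+1)}}\mid \F_n]}{e^{u_o^{(n)}}},
\end{align*}
where in the denominator I apply the $\rho_\infty$-martingale property of $(e^{u_o^{(n)}})_{n\in\N}$. Substituting $Y=e^{u_{o,x}^{(n+1)}}=e^{u_x^{(n+1)}-u_o^{(n+1)}}$ collapses the numerator to $E_{\rho_\infty}[e^{u_x^{(n+1)}}\mid \F_n] = e^{u_x^{(n)}}$, again by the $\rho_\infty$-martingale property applied to $x$. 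Dividing gives $e^{u_x^{(n)}-u_o^{(n)}} = e^{u_{o,x}^{(n)}}$, as required.

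I don't foresee a genuine obstacle; the only point to handle with care is ensuring that the Radon-Nikodym derivative is used at level $\F_{n+1}$ (not $\F_\infty$) when computing the conditional expectation under $\rho_o$, since only the finite-level derivatives are given explicitly by \eqref{eq:radon-nikodym-nu-finite-volume}. This is purely bookkeeping and presents no real difficulty.
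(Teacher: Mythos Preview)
Your argument is correct. Both the mean computation and the Bayes-formula reduction to the $\rho_\infty$-martingale property are valid, and your remark about working at level $\F_{n+1}$ is the right bookkeeping.

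The paper takes a slightly different and more compressed route: instead of computing the $\rho_o$-conditional expectation via Bayes' formula, it simply invokes \eqref{eq:radon-nikodym-nu-finite-volume}, i.e.\ the identification $e^{u_{o,x}^{(n)}}=d\rho_x|_{\F_n}/d\rho_o|_{\F_n}$, together with the fact that $\rho_x$ and $\rho_o$ are probability measures. The martingale property then drops out from the general principle that the Radon--Nikodym derivatives of one fixed probability measure with respect to another, restricted to an increasing filtration, always form a martingale (indeed, for $A\in\F_n$ one has $\int_A e^{u_{o,x}^{(n+1)}}\,d\rho_o=\rho_x(A)=\int_A e^{u_{o,x}^{(n)}}\,d\rho_o$), and the expectation-$1$ statement is just $\rho_x(\R_+^V)=1$. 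Your approach unpacks the same content by hand, making the dependence on the $\rho_\infty$-martingale of $(e^{u_x^{(n)}})_n$ explicit; the paper's approach hides that dependence inside the prior construction of $\rho_x$ (Lemma~\ref{le:nu-o-n-consistent} and Definition~\ref{def:rho-o}). Neither approach gains anything substantive over the other; the paper's version is just terser.
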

\begin{proof}
The claims are consequences of \eqref{eq:radon-nikodym-nu-finite-volume} and 
the fact that $\rho_o$ and $\rho_x$ are probability measures. 
\end{proof}

Being a positive martingale, the process $\left(e^{u_{o,x}^{(n)}}\right)_{n\in\N}$
converges $\rho_o$-almost surely to a limit taking values in $[0,\infty)$. 
We define 
\begin{align}
\label{eq:def-u-o-x}
u_{o,x}:=\lim_{n\to\infty}u_{o,x}^{(n)},
\end{align}
whenever this limit exists in $\R$, and $u_{o,x}:=0$ otherwise. 

\begin{definition}
\label{def:B-prime}
Let $B'$ denote the set of all $\beta\in B$ such that 
$u^{(n)}_{o,x}\to u_{o,x}\in\R$ as $n\to\infty$ for all $x\in V$ 
and the Markov jump process on the infinite 
graph $G$ in the environment $\beta$ with distribution $Q_{o,\beta}^G$ is 
transient. 
\end{definition}

\subsection{Comparison with the approach in \cite{sabot-tarres-zeng2017}}
\label{sec:coupling-gamma}

In this section, we explain the connection between the measure $\rho_o$ 
and the construction of the mixing 
measure used by Sabot and Zeng \cite{sabot-zeng15}, which uses an additional 
gamma random variable. We use this connection to deduce uniform integrability
of $(e^{u_{o,x}^{(n)}})_{n\in\N}$ with respect to $\rho_o$. 

Recall that the random variables $\beta_x$, $x\in V$, denote the canonical
projections on $\R_+^V$, and $o\in V$ is fixed. We enlarge the underlying space 
$\R_+^V$ by an additional 
component, taking $\R_+^V\times\R_+$. The projection to the last coordinate 
is denoted by $\gamma_o$, while the projections to the other components are 
again denoted by $\beta_x$, slightly abusing the notation. We endow 
$\R_+^V\times\R_+$ with the sigma field generated by the projections and with 
the probability measure $\rho_o\times\Gamma$, where $\Gamma$ denotes the 
$\Gamma(\frac12,1)$-distribution. 

Fix $n\in\N$. We define 
\begin{align}
\label{eq:def-beta-new}
\beta_x^\new:=&\beta_x+\delta_{xo}\gamma_o 
\qquad           \text{ for }x\in V.
\end{align}
Recall the definition \eqref{eq:def-beta-delta} of $\beta_\delta^{\new,n}$. 
For $x\in V_n$, we use $\beta_x^{\new,n}$ to be a synonym for $\beta_x^\new$,
and abbreviate $\beta^{\new,n}=(\beta_x^{\new,n})_{x\in\tilde V_n}$.

Let $\nu^{C^{(n)}}_{\tilde V_n}$ denote the measure on $\R_+^{\tilde V_n}$ with 
Laplace transform given by formula \eqref{eq:laplace-rho-infty} with the 
weighted graph $(V,E,C)$ in \eqref{eq:laplace-rho-infty} replaced by 
$(\tilde V_n,\tilde E_n,C^{(n)})$. This measure was introduced in 
\cite{sabot-tarres-zeng2017}.

\begin{lemma}
\label{le:beta-new}
The distribution of $\beta^{\new,n}$ 
with respect to $\rho_o\times\Gamma$ equals the measure $\nu^{C^{(n)}}_{\tilde V_n}$. 
In particular, $\cH_{\beta^{\new,n}}\in\R^{\tilde V_n\times\tilde V_n}$
is $\rho_o\times\Gamma$-a.s.\ positive definite. Moreover, the random vector 
$(u_{o,x}^{(n)})_{x\in\tilde V_n}$ can be  $\rho_o\times\Gamma$-a.s.\ recovered from 
$\beta^{\new,n}$ via $u_{o,o}^{(n)}=0$ and  
\begin{align}
\label{eq:u-in-terms-of-beta-new}
(e^{u_{o,x}^{(n)}})_{x\in\tilde V_n\setminus\{o\}}=((\cH_{\beta^{\new,n}})_{\tilde V_n\setminus\{o\},\tilde V_n\setminus\{o\}})^{-1}C^{(n)}_{\tilde V_n\setminus\{o\},o},
\end{align}
or equivalently, 
\begin{align}
\label{eq:def-u-o1}
&   \beta_x^{\new,n}=\frac12\sum_{y\in \tilde V_n}C_{xy}^{(n)}
e^{u_{o,y}^{(n)}-u_{o,x}^{(n)}},\quad (x\in\tilde V_n\setminus\{o\}) .
\end{align}
\end{lemma}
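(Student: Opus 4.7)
The plan is to verify the claimed distributional identity by Laplace-transform comparison, together with a linear-algebra argument for the recovery formulas \eqref{eq:u-in-terms-of-beta-new} and \eqref{eq:def-u-o1}. For the latter equivalence, rewriting \eqref{eq:def-u-o1} for $x \in \tilde V_n \setminus \{o\}$ as a linear system in the vector $(e^{u^{(n)}_{o,y}})_{y \in \tilde V_n}$ with the normalization $e^{u^{(n)}_{o,o}} = 1$ yields
\begin{align*}
(\cH_{\beta^{\new,n}})_{\tilde V_n \setminus \{o\}, \tilde V_n \setminus \{o\}}\, (e^{u^{(n)}_{o,y}})_{y \in \tilde V_n \setminus \{o\}} = C^{(n)}_{\tilde V_n \setminus \{o\}, o},
\end{align*}
which is exactly \eqref{eq:u-in-terms-of-beta-new}. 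Once $\cH_{\beta^{\new,n}}$ is shown to be positive definite almost surely, its principal submatrix is invertible and the two formulations determine the same vector. The row $x = o$ of \eqref{eq:def-u-o1} then holds by construction of $\beta^{\new,n}_o$ via \eqref{eq:def-beta-new} once the distributional identity is in place.

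For the distributional identity, the strategy is to compute the Laplace transform of $\beta^{\new,n}$ under $\rho_o \times \Gamma$ and to match it with the Laplace transform defining $\nu^{C^{(n)}}_{\tilde V_n}$. Integrating first over the independent variable $\gamma_o$ with $\Gamma(1/2,1)$-distribution produces the factor $(\lambda_o + 1)^{-1/2}$, matching one of the prefactors in the target Laplace transform. The remaining integral under $\rho_o|_{\F_n}$ can be rewritten using $d\rho_o|_{\F_n} = e^{u^{(n)}_o}\, d\rho_\infty|_{\F_n}$. The key observation is that $\beta^{\new,n}_\delta$, as defined in \eqref{eq:def-beta-delta}, coincides with the quantity $\tilde\beta_\delta$ of \cite{sabot-tarres-zeng2017} (as noted in the remark following \eqref{eq:def-beta-delta}); thus under the decomposition $\beta_\delta = \tilde\beta_\delta + \gamma_\delta$ valid in $\nu^{C^{(n)}}_{\tilde V_n}$ with $\gamma_\delta \sim \Gamma(1/2,1)$ independent of $(\beta_x)_{x \in V_n}$, the marginal of $(\beta_x)_{x \in V_n}$ under $\nu^{C^{(n)}}_{\tilde V_n}$ equals $\rho_\infty|_{\F_n}$ (as can be checked by setting $\lambda_\delta = 0$ in the two Laplace transforms and using $C^{(n)}_{x\delta} = \sum_{y \in V \setminus V_n} C_{xy}$). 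The pushforward identity then reduces to an algebraic Laplace-transform identity encoding the change of reference vertex from $\delta$ (normalization $u^{(n)}_\delta = 0$) to $o$ (normalization $u^{(n)}_{o,o} = 0$), mediated precisely by the Radon-Nikodym factor $e^{u^{(n)}_o}$.

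The main obstacle is the verification of this change-of-reference Laplace-transform identity. It is structurally natural, since $\rho_o$ was defined via \eqref{eq:def-nu-o-n} precisely to implement this change of reference (cf.\ Lemma~\ref{le:mixing-measure-on-finite-graph}), but its verification requires careful bookkeeping of the positive-definiteness domain $B$ from \eqref{eq:def-B}, the conventions on double sums over ordered pairs in \eqref{eq:laplace-rho-infty}, and the explicit expression $(e^{u^{(n)}_x})_{x \in V_n} = (\cH_\beta^{(n)})^{-1} C^{(n)}_{V_n\delta}$ from \eqref{eq:def-psi-n}. Once the Laplace-transform identity is established, positive-definiteness of $\cH_{\beta^{\new,n}}$ almost surely follows from the corresponding property of the Hessian under $\nu^{C^{(n)}}_{\tilde V_n}$, which is built into the definition of the latter measure on the finite graph $G_n$.
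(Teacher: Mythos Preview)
Your linear-algebra argument for the equivalence of \eqref{eq:u-in-terms-of-beta-new} and \eqref{eq:def-u-o1} matches the paper's. However, your logical ordering is off in one place: equation \eqref{eq:def-u-o1} holds \emph{deterministically} for every $\beta\in B$, simply by combining \eqref{eq:beta-finite-volume} (for $x\in V_n\setminus\{o\}$, where $\beta_x^{\new,n}=\beta_x$) with the definition \eqref{eq:def-beta-delta} (for $x=\delta$). No distributional input is needed, and there is no ``row $x=o$'' of \eqref{eq:def-u-o1} to verify --- the lemma only asserts it for $x\neq o$, and indeed the analogue at $x=o$ is false precisely because of the added $\gamma_o$.

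For the distributional identity itself the paper takes a much shorter route than your Laplace-transform plan. Once \eqref{eq:def-u-o1} is in hand and Lemma~\ref{le:mixing-measure-on-finite-graph} identifies the law of $(u^{(n)}_{o,x})_{x\in\tilde V_n}$ under $\rho_o$ as the $H^{2|2}$ measure on $G_n$ pinned at $o$, the paper simply invokes Corollary~2 of \cite{sabot-tarres-zeng2017}. That corollary states exactly that building the $\beta$-variables from the $H^{2|2}$ field via \eqref{eq:def-u-o1} and then adding an independent $\Gamma(\tfrac12,1)$ at the reference vertex produces the law $\nu^{C^{(n)}}_{\tilde V_n}$. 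Your ``main obstacle'' --- the change-of-reference identity --- is therefore precisely the content of that corollary; your route would end with either citing it or reproving it. Moreover, a direct Laplace-transform verification is harder than your sketch suggests: the factors $e^{u_o^{(n)}}$ and $e^{-\lambda_\delta\beta_\delta^{\new,n}}$ depend on $(\beta_x)_{x\in V_n}$ through $(\cH_\beta^{(n)})^{-1}$, so the resulting integral against $\rho_\infty$ is not a mere evaluation of \eqref{eq:laplace-rho-infty} but essentially the density-level computation underlying the cited results of \cite{sabot-tarres-zeng2017}.
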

\begin{proof}
Using the definition \eqref{eq:def-beta-new} of $\beta^{\new,n}$, the 
claim \eqref{eq:def-u-o1} is just a combination of the expression 
\eqref{eq:beta-finite-volume} for $\beta_x$, $x\in V_n$, and the definition
\eqref{eq:def-beta-delta} of $\beta_\delta^{\new,n}$.
 
By Lemma~\ref{le:mixing-measure-on-finite-graph}, $\rho_o^n=\rho_o|_{\F_n}$ 
describes 
the mixing measure for VRJP on $G_n$ starting from $o$, with random transition 
rates expressed in terms of the variables $(u_{o,x}^{(n)})_{x\in\tilde V_n}$; cf.\ 
\eqref{def:Q-o-beta-G-n}. 
Since these variables satisfy the equations 
\eqref{eq:def-u-o1}, Corollary 2 in \cite{sabot-tarres-zeng2017} and the fact $u_{o,o}^{(n)}=0$ imply that $(\beta^{\new,n}_x)_{x\in\tilde V_n}$ 
has distribution $\nu^{C^{(n)}}_{\tilde V_n}$ with respect to $\rho_o\times\Gamma$. 

The measure $\nu^{C^{(n)}}_{\tilde V_n}$ is supported 
on $\{\beta\in\R_+^{\tilde V_n}:\cH_\beta\text{ is positive definite}\}$
by its definition, i.e., Definition 1 in \cite{sabot-tarres-zeng2017}. 
Given invertibility of $(\cH_{\beta^{\new,n}})_{\tilde V_n\setminus\{o\},\tilde V_n\setminus\{o\}}$, 
formula \eqref{eq:u-in-terms-of-beta-new} is just another way of writing 
\eqref{eq:def-u-o1}. 
\end{proof}

We remark that the martingale property of $(e^{u_{o,x}^{(n)}})_{n\in\N}$ stated in 
Lemma \ref{le:mgs-wrt-mu-o} is written with respect to the measure
$\rho_o$ without using $\Gamma$, because $u_{o,x}^{(n)}$ does not depend on 
$\gamma_o$. 

\begin{lemma}
\label{le:uniform-integrability}
For all $o,x\in V$, the sequence $(e^{u_{o,x}^{(n)}})_{n\in\N}$
is uniformly integrable with respect to $\rho_o$. 
\end{lemma}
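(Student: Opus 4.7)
By Lemma~\ref{le:mgs-wrt-mu-o}, the sequence $(e^{u_{o,x}^{(n)}})_{n\in\N}$ is a non-negative $(\F_n)$-martingale of mean $1$ with respect to $\rho_o$. The plan is to obtain uniform integrability by establishing a uniform $L^2$-bound, and for this the key tool is the coupling provided by Lemma~\ref{le:beta-new}.

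Since $e^{u_{o,x}^{(n)}}$ is $\F_n$-measurable and, in particular, independent of the auxiliary gamma variable $\gamma_o$, its law under $\rho_o$ coincides with its law under $\rho_o\times\Gamma$. By Lemma~\ref{le:beta-new}, under $\rho_o\times\Gamma$ the vector $\beta^{\new,n}$ is distributed according to $\nu_{\tilde V_n}^{C^{(n)}}$, and formula~\eqref{eq:u-in-terms-of-beta-new} realizes $e^{u_{o,x}^{(n)}}$ as an explicit function $F_n$ of $\beta^{\new,n}$. Consequently,
\begin{align*}
E_{\rho_o}\bigl[(e^{u_{o,x}^{(n)}})^2\bigr]
=E_{\nu_{\tilde V_n}^{C^{(n)}}}\bigl[F_n^2\bigr],
\end{align*}
and it suffices to show that this last quantity is bounded uniformly in $n$. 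The description of $\nu_{\tilde V_n}^{C^{(n)}}$ through its Laplace transform~\eqref{eq:laplace-rho-infty} is symmetric in the vertices of $\tilde V_n$, and under this measure $F_n$ is precisely the $\psi$-field of the $H^{2|2}$ model on $G_n$ normalized at the reference vertex~$o$ rather than at $\delta$.

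The crucial point is that the reference vertex $o$ is fixed and independent of $n$, so the ``pinning'' conductances $(C_{yo})_{y\sim o}$ entering the Laplace transform do not depend on $n$. This is in sharp contrast with the approach of \cite{sabot-zeng15}, where the pinning at $\delta$ goes through the conductances $C_{x\delta}^{(n)}=\sum_{y\in V\setminus V_n}C_{xy}$, which vanish as $n\to\infty$; it is precisely this vanishing that causes the uniform integrability of $\psi^{(n)}(x)$ under $\rho_\infty$ to be unresolved in that setting. Here, the uniform pinning at $o$ is what provides the uniform-in-$n$ second-moment control.

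To turn this qualitative observation into a quantitative bound, the plan is to differentiate the Laplace transform~\eqref{eq:laplace-rho-infty} twice (equivalently, to invoke the Ward-type identity of the $H^{2|2}$ model used in \cite{sabot-tarres-zeng2017}) in order to express $E_{\nu_{\tilde V_n}^{C^{(n)}}}[F_n^2]$ as an integral of a Green's function of the restricted Schr\"odinger operator $(\cH_\beta)_{\tilde V_n\setminus\{o\},\tilde V_n\setminus\{o\}}$, which is then dominated by the Green's function of the transient jump process $Q_{o,\beta}^G$ on $G$; the latter is finite $\rho_o$-a.s.\ by the standing transience assumption. The $L^p$-bound $\sup_n E_{\rho_o}[(e^{u_{o,x}^{(n)}})^2]<\infty$ thus obtained gives uniform integrability. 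The main obstacle is making the comparison of Green's functions uniform in $n$ after integration against $\rho_o$, as one needs to exchange a pointwise-in-$\beta$ Green's function inequality with the $\rho_o$-integration while controlling the tail behaviour arising from the rare environments $\beta$ on which the inequality degenerates.
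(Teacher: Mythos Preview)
The paper's proof is a two-line citation: Lemma~\ref{le:beta-new} identifies the joint law of $(u^{(n)}_{o,x})_{x\in\tilde V_n}$ under $\rho_o$ with that of the variables $(u^{(n)}(o,x))_{x\in\tilde V_n}$ constructed in \cite{sabot-zeng15}, and Corollary~2 of that reference asserts precisely that $(e^{u^{(n)}(o,x)})_n$ is uniformly integrable. No moment estimate is derived from scratch.

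Your plan aims instead at a uniform $L^2$ bound, and as written it does not go through. First, differentiating the Laplace transform~\eqref{eq:laplace-rho-infty} in $\lambda$ produces moments of the $\beta$-variables, not of $e^{u_{o,x}^{(n)}}$; the link between second moments of $\psi$-type quantities and Green-function entries comes from separate supersymmetric identities, not from that formula. Second, and more seriously, a uniform $L^2$ bound for these martingales is not known to hold under the bare transience hypothesis of the paper, and you do not establish it: passing from a $\rho_o$-a.s.\ finite Green's function of $Q^G_{o,\beta}$ to an \emph{expectation} that is finite uniformly in $n$ is exactly the ``main obstacle'' you name at the end, and the proposal leaves it unresolved. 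Your observation that pinning at the fixed vertex $o$, rather than at $\delta_n$, is what distinguishes this situation from the open $\psi^{(n)}$-problem under $\rho_\infty$ is correct---but that is precisely why Corollary~2 of \cite{sabot-zeng15} applies directly, and invoking it is the argument the paper actually uses.
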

\begin{proof}
The claimed uniform integrability is essentially contained in Corollary 2 
in \cite{sabot-zeng15}. Indeed, Sabot and Zeng define a family of 
random variables $(u^{(n)}(o,x))_{x\in\tilde V_n}$. As a consequence of 
Lemma \ref{le:beta-new}, its joint law equals 
$\cL_{\rho_o}((u^{(n)}_{o,x})_{x\in\tilde V_n})$. 
Corollary 2 of \cite{sabot-zeng15} implies that for any $x\in V$ the 
sequence $(e^{u^{(n)}(o,x)})_{n\in\N}$ is uniformly integrable, which allows
us to conclude.
\end{proof}

\subsection{The random environment for VRJP on an infinite graph}

\begin{lemma}
\label{le:lim-u-n-finite}
For all $o,x\in V$, the limit of $u_{o,x}^{(n)}$ as $n\to\infty$ exists 
$\rho_o$-almost surely in $\R$. In other words, $u_{o,x}$ is $\rho_o$-almost 
surely given by formula \eqref{eq:def-u-o-x}. 
Moreover, the measures $\rho_x$ and $\rho_o$ are mutually absolutely continuous 
with the Radon-Nikodym derivative
\begin{align}
\label{eq:radon-nikodym-nu-infinite}
\frac{d\rho_x}{d\rho_o}=e^{u_{o,x}}\quad\rho_o\text{-a.s.} 
\end{align}
Furthermore, 
\begin{align}
\label{eq:relation-eu-finite-vol}
e^{u_{o,x}^{(n)}}=E_{\rho_o}[ e^{u_{o,x}}|\F_n] \quad\text{ for all }n
\quad\text{ and }\quad
E_{\rho_o}[ e^{u_{o,x}}]=1.
\end{align}
\end{lemma}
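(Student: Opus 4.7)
The plan is to combine the martingale convergence theorem with the uniform integrability supplied by Lemma~\ref{le:uniform-integrability}, and then to use a symmetry argument (swapping the roles of $o$ and $x$) to rule out that the limit vanishes.

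First I would apply the martingale convergence theorem. By Lemma~\ref{le:mgs-wrt-mu-o}, the nonnegative process $(e^{u_{o,x}^{(n)}})_{n\in\N}$ is an $(\F_n,\rho_o)$-martingale of mean one, so it converges $\rho_o$-a.s.\ to some $\F_\infty$-measurable random variable $W\in[0,\infty)$. The uniform integrability asserted in Lemma~\ref{le:uniform-integrability} upgrades this to $L^1(\rho_o)$-convergence, which yields $E_{\rho_o}[W]=1$ and $e^{u_{o,x}^{(n)}}=E_{\rho_o}[W\,|\,\F_n]$ for every $n$. Combined with the finite-volume identity \eqref{eq:radon-nikodym-nu-finite-volume}, this shows that for any $n$ and any $A\in\F_n$ one has $\rho_x(A)=\int_A e^{u_{o,x}^{(n)}}\,d\rho_o=\int_A W\,d\rho_o$; since $\bigcup_n\F_n$ is a $\pi$-system generating $\F_\infty$, the identity $\rho_x=W\rho_o$ holds on $\F_\infty$.

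The main obstacle is to show that $W>0$ holds $\rho_o$-a.s., so that $\log W$ is real-valued. For this, my plan is to apply the same argument after interchanging the roles of $o$ and $x$. Since $u_{x,o}^{(n)}=-u_{o,x}^{(n)}$, Lemma~\ref{le:uniform-integrability} with $(o,x)$ replaced by $(x,o)$ yields uniform integrability of $(e^{-u_{o,x}^{(n)}})_{n\in\N}$ under $\rho_x$, and Lemma~\ref{le:mgs-wrt-mu-o} says this sequence is an $(\F_n,\rho_x)$-martingale. By the preceding paragraph applied symmetrically, it converges $\rho_x$-a.s.\ and in $L^1(\rho_x)$ to some $W'\in[0,\infty)$ with $E_{\rho_x}[W']=1$ and $\rho_o=W'\rho_x$ on $\F_\infty$. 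Mutual absolute continuity of $\rho_x$ and $\rho_o$ then forces $W\cdot W'=1$ $\rho_o$-a.s., and in particular $W>0$ $\rho_o$-a.s.

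Setting $u_{o,x}:=\log W$, I would then conclude that $u_{o,x}^{(n)}\to u_{o,x}$ in $\R$ holds $\rho_o$-a.s., recovering the definition \eqref{eq:def-u-o-x}. The identity $d\rho_x/d\rho_o=e^{u_{o,x}}$ of \eqref{eq:radon-nikodym-nu-infinite} is then the content of $\rho_x=W\rho_o$, while \eqref{eq:relation-eu-finite-vol} restates the conditional expectation formula and the mean-one property already established in the first step.
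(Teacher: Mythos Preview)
Your proof is correct and follows essentially the same route as the paper: martingale convergence plus the uniform integrability of Lemma~\ref{le:uniform-integrability} to identify $\lim_n e^{u_{o,x}^{(n)}}$ as $d\rho_x/d\rho_o$, followed by the swap $o\leftrightarrow x$ to obtain $\rho_o\ll\rho_x$ and hence strict positivity of the derivative. The only cosmetic difference is that you spell out the $\pi$-system step and the product relation $WW'=1$, whereas the paper argues directly from mutual absolute continuity that $d\rho_x/d\rho_o>0$.
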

\begin{proof}
Taking the limit as $n\to\infty$ in \eqref{eq:radon-nikodym-nu-finite-volume} 
with the help of the martingale convergence theorem and the uniform 
integrability from Lemma \ref{le:uniform-integrability}, we know that 
\begin{align}
\lim_{n\to\infty}\frac{d\rho_x|_{\F_n}}{d\rho_o|_{\F_n}}=
\lim_{n\to\infty}e^{u_{o,x}^{(n)}}
\end{align}
exists $\rho_o$-almost surely in $\R$ and in $L^1$, and that   
$\lim_{n\to\infty}u_{o,x}^{(n)}\in\R\cup\{-\infty\}$ holds $\rho_o$-almost surely.
We need to exclude $\rho_o$-a.s.\ the value $-\infty$.
Since $\bigcup_{n\in\N}\F_n$ generates $\sigma(\beta_x,x\in V)$, we conclude
$\rho_x\ll\rho_o$ with the Radon-Nikodym-derivation 
$d\rho_x/d\rho_o=\lim_{n\to\infty}e^{u_{o,x}^{(n)}}$. Interchanging $x$ and $o$ it 
follows also that $\rho_o\ll\rho_x$ and $d\rho_x/d\rho_o>0$ holds $\rho_o$-a.s.
Hence, $\rho_o$-a.s., $\lim_{n\to\infty}u_{o,x}^{(n)}>-\infty$. This shows 
that indeed $u_{o,x}$ is given by formula \eqref{eq:def-u-o-x} $\rho_o$-a.s.\
and that the claim \eqref{eq:radon-nikodym-nu-infinite} is valid. 

We conclude that $(e^{u_{o,x}^{(n)}})_{n\in\N}$ is a uniformly integrable 
martingale converging to $e^{u_{o,x}}$ in $L^1(\rho_o)$ and $\rho_o$-a.s.
The first equation in \eqref{eq:relation-eu-finite-vol} follows.
$L^1$-convergence and Lemma \ref{le:mgs-wrt-mu-o} imply 
the last equation in \eqref{eq:relation-eu-finite-vol}.
\end{proof} 

\begin{remark}
\label{rem:beta-in-terms-of-u}
We remark that formula \eqref{eq:beta-x-in-terms-of-u} is 
a consequence of \eqref{eq:beta-finite-volume} and the $\rho_o$-almost
sure convergence of $u_{o,x}^{(n)}$, $x\in V$, as $n\to\infty$ to $u_{o,x}$
stated in Lemma \ref{le:lim-u-n-finite}.  In particular, one has 
$\rho_o(B')=1$ and the equation in
\eqref{eq:beta-x-in-terms-of-u} holds for all $\beta\in B'$ with 
$B'$ given in Definition~\ref{def:B-prime}. 
\end{remark}

The VRJP in exchangeable time scale on the finite graph $G_n$ can not only 
be described as a mixture of Markov jump processes with respect to $\rho_o^n$,
but also with respect to its extension $\rho_o$:

\begin{lemma}
\label{le:representation-mu-o-n}
For any event 
$A\subseteq\tilde V_n^{\N_0}\times\R_+^{\N_0}$, one has 
\begin{align}
\label{eq:claim-ST2012-mixture-finite-volume}
P_o^n(A)=\int_{\R_+^V} Q_{o,\beta}^{G_n}(A)\,\rho_o(d\beta).
\end{align}
\end{lemma}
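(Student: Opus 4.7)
The plan is to reduce to Lemma \ref{le:mixing-measure-on-finite-graph}, which already gives the mixture representation
\begin{align*}
P_o^n(A)=\int_{\R_+^V}Q_{o,\beta}^{G_n}(A)\,\rho_o^n(d\beta)
\end{align*}
with the finite-volume measure $\rho_o^n=\rho_o|_{\F_n}$ in place of $\rho_o$. The only thing to check is that the integrand $\beta\mapsto Q_{o,\beta}^{G_n}(A)$ is $\F_n$-measurable, so that integrating it against $\rho_o$ is the same as integrating it against its restriction $\rho_o|_{\F_n}$.

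For the $\F_n$-measurability, I would inspect the explicit formula \eqref{def:Q-o-beta-G-n} defining $Q_{o,\beta}^{G_n}$. The transition probabilities and holding time densities depend on $\beta$ only through:
(i) the numbers $(u_{o,x}^{(n)})_{x\in\tilde V_n}$, which by their construction in \eqref{eq:def-psi-n} and \eqref{eq:def-u} are measurable functions of $(\beta_x)_{x\in V_n}$, hence $\F_n$-measurable;
(ii) the quantities $\beta_{w(k)}$ for $w(k)\in V_n$, which are trivially $\F_n$-measurable; and
(iii) $\beta_\delta=\beta_\delta^{\new,n}$, which is defined in \eqref{eq:def-beta-delta} as a function of the $u_{o,y}^{(n)}$ and is therefore also $\F_n$-measurable.
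Since the law $Q_{o,\beta}^{G_n}$ on the countably generated space $\tilde V_n^{\N_0}\times\R_+^{\N_0}$ is determined by the joint law of all $(w(k),l(k))_{k\le K}$, a monotone-class argument then gives that $\beta\mapsto Q_{o,\beta}^{G_n}(A)$ is $\F_n$-measurable for every measurable $A$.

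Given this, the conclusion follows immediately: by Lemma \ref{le:mixing-measure-on-finite-graph} together with Definition \ref{def:rho-o},
\begin{align*}
P_o^n(A)
=\int_{\R_+^V}Q_{o,\beta}^{G_n}(A)\,\rho_o^n(d\beta)
=\int_{\R_+^V}Q_{o,\beta}^{G_n}(A)\,\rho_o|_{\F_n}(d\beta)
=\int_{\R_+^V}Q_{o,\beta}^{G_n}(A)\,\rho_o(d\beta),
\end{align*}
where the last equality is the definition of the restriction of a measure applied to an $\F_n$-measurable function. There is no substantive obstacle here; the only point requiring care is bookkeeping the $\F_n$-measurability of $\beta_\delta^{\new,n}$ and the $u_{o,x}^{(n)}$, which follows directly from their defining formulas on the finite graph $G_n$.
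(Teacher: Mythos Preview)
Your proof is correct and follows essentially the same approach as the paper: both reduce to Lemma~\ref{le:mixing-measure-on-finite-graph}, observe that $\beta\mapsto Q_{o,\beta}^{G_n}(A)$ is $\F_n$-measurable, and use $\rho_o^n=\rho_o|_{\F_n}$ to pass from $\rho_o^n$ to $\rho_o$. You supply more detail on the $\F_n$-measurability (checking $u_{o,\cdot}^{(n)}$, $\beta_x$ for $x\in V_n$, and $\beta_\delta^{\new,n}$ explicitly), which the paper simply asserts.
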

\begin{proof}
By Lemma~\ref{le:mixing-measure-on-finite-graph}, the claim 
holds with $\rho_o$ replaced by $\rho_o^n$. Since 
$Q_{o,\beta}^{G_n}(A)$ is $\F_n$-measurable and $\rho_o^n=\rho_o|_{\F_n}$, 
the claim follows. 
\end{proof}

\medskip\noindent\begin{proof}[Proof of Fact \ref{fact:ST2012-mixture}]
It suffices to show 
\begin{align}
E_{P_o^n}[F]
\konv{n\to\infty}\int_{\R_+^V}E_{Q_{o,\beta}^G}[F]\,\rho_o(d\beta)
\end{align}
for functions $F(\hat w):=f(\hat w|_{[0,J]})$ with any $J\in\N$ and any 
bounded measurable function $f:\hat W([0,J])\to\R$. 
Let $\Pi_J$ denote the set of all paths 
$\pi=(\pi_0,\pi_1,\ldots,\pi_{J+1})\in V^{[0,J+1]}$ in $G$ which start at $o$. 
Clearly, $\Pi_J$ is a finite set. Take $N$ large enough that 
any path in $\Pi_J$ does not leave $V_N$. Let $\beta\in B'$. Because 
$\{u^{(n)}_{o,x}(\beta):x\in V_N, n\ge N\}$ for the given $\beta$ is bounded, 
dominated convergence yields for $n\ge N$
\begin{align}
E_{Q_{o,\beta}^{G_n}}[F]
= & \sum_{\pi\in\Pi_J}\int\limits_{\R_+^{[0,J]}} f(\pi|_{[0,J]},\ell)
\prod_{j=0}^J \frac{C_{\pi_j\pi_{j+1}}}{2}e^{u^{(n)}_{o,\pi_{j+1}}-u^{(n)}_{o,\pi_j}}
e^{-\beta_{\pi_j}\ell(j)}
\, d\ell \konv{n\to\infty}E_{Q_{o,\beta}^G}[F].
\end{align}
Hence, using Lemma \ref{le:representation-mu-o-n} and dominated convergence
again, we obtain 
\begin{align}
E_{P_o^n}[F]=\int_{\R_+^V}E_{Q_{o,\beta}^{G_n}}[F]\,\rho_o(d\beta)
\konv{n\to\infty}\int_{\R_+^V}E_{Q_{o,\beta}^G}[F]\,\rho_o(d\beta).
\end{align}
\end{proof}

\section{Proof of the main result}
\label{sec:proof-main-thm}
\subsection{Transition rates of various reductions}

For $\hat w=(w,l)\in\hat W^\to$ or $\hat w\in\hat W^\to_n$ for some $n$, 
the hitting time and the return time of a set $A$ are defined by 
\begin{align}
\label{def-H-A}
\hit_A(\hat w)= & \inf\{k\ge 0 :\; w(k)\in A\},\\
\widetilde\hit_A(\hat w)=& \inf\{k\ge 1 :\; w(k)\in A\},
\end{align}
respectively. 
If $A=\{y\}$ is a singleton, we write $\hit_y=\hit_{\{y\}}$ and 
$\widetilde\hit_y=\widetilde\hit_{\{y\}}$. 

Let $K\subset V$ be a finite set with $o\in K$. Consider $n$ large 
enough so that $K\subseteq V_n$. We define for $x,y\in K$ 
\begin{align}
\label{eq:def-e-K-n}
e_K^n(x) = & e_{K,\beta}^n(x) 
:=1_{\{x\in K\}}e^{2u_{o,x}^{(n)}}Q_{x,\beta}^{G_n}(\hit_\delta<\widetilde\hit_K),\\
q_K^n(x,y) = & q_{K,\beta}^n(x,y) 
:=1_{\{x\in K\}}Q_{x,\beta}^{G_n}(1<\widetilde\hit_K=\widetilde\hit_y<\hit_\delta).
\end{align}
Note that $\{\hit_\delta<\widetilde\hit_K\}$ means the event to 
exit $K$ immediately and reach $\delta$ before returning to $K$ and
the event $\{1<\widetilde\hit_K=\widetilde\hit_y<\hit_\delta\}$ means that the 
walk exits $K$ immediately and reenters it at $y$ before hitting $\delta$. 
The corresponding quantities in infinite volume are given by
\begin{align}
\label{eq:def-e-K-u-infinite-volume}
e_K(x) = & e_{K,\beta}(x) :=
1_{\{x\in K\}}e^{2u_{o,x}}Q_{x,\beta}^G(\widetilde\hit_K=\infty), \\
q_K(x,y) = & q_{K,\beta}(x,y) := 
1_{\{x\in K\}}Q_{x,\beta}^G(1<\widetilde\hit_K=\widetilde\hit_y<\infty).
\end{align}
Similarly to the above, the event $\{\widetilde\hit_K=\infty\}$ means 
that the walk exits $K$ immediately and never returns to it, and 
the event $\{1<\widetilde\hit_K=\widetilde\hit_y<\infty\}$ means that 
the walk exits $K$ immediately and reenters it at $y$. 

Recall that for any fixed $n\in\N$, the expression $\beta_\delta$ is a 
synonym for $\beta_\delta^{\new,n}$, which does not display the dependence
on $n$. 

\begin{lemma}
For all $\beta\in B$, all finite $\emptyset\neq K\subset V$, $x\in K$, 
and all $n\in\N$, one has 
\begin{align}
\label{eq:const-intensity}
\beta_\delta e^{2u_{o,\delta}^{(n)}}
Q_{\delta,\beta}^{G_n}[\text{first excursion hits }K \text{ first in }x]
=& \beta_x e_K^n(x).
\end{align}
Summing over $x\in K$, we have 
\begin{align}
\beta_\delta e^{2u_{o,\delta}^{(n)}}
Q_{\delta,\beta}^{G_n}[\text{first excursion hits }K]
= & \sum_{x\in K}\beta_x e_K^n(x).
\label{eq:const-intensity-summed}
\end{align}
\end{lemma}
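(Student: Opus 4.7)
The approach is to read \eqref{eq:const-intensity} as a path-by-path time-reversal identity for the embedded jump chain of $Q_{\cdot,\beta}^{G_n}$. From \eqref{def:Q-o-beta-G-n}, this jump chain has one-step transition probabilities
\begin{align}
p(a,b)=\frac{C_{ab}^{(n)}e^{u_{o,b}^{(n)}-u_{o,a}^{(n)}}1_{\{\{a,b\}\in\tilde E_n\}}}{2\beta_a},\qquad a,b\in\tilde V_n.
\end{align}
Multiplying by $\beta_ae^{2u_{o,a}^{(n)}}$ on both sides and using the symmetry of $C_{ab}^{(n)}e^{u_{o,a}^{(n)}+u_{o,b}^{(n)}}$ in $a,b$ gives the detailed balance relation
\begin{align}
\beta_ae^{2u_{o,a}^{(n)}}p(a,b)=\beta_be^{2u_{o,b}^{(n)}}p(b,a),
\end{align}
so $\pi(a):=\beta_ae^{2u_{o,a}^{(n)}}$ is a reversing measure for the jump chain.

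Next I would decompose each side of \eqref{eq:const-intensity} as a sum over finite nearest-neighbor paths in $G_n$. The two events in question depend only on the skeleton $w$ and not on the waiting times $l$, so the exponential waiting-time factors integrate to one and can be discarded. The event on the left is the disjoint union, over $m\ge 1$ and paths $\pi=(\delta=\pi_0,\pi_1,\ldots,\pi_m=x)$ with $\pi_1,\ldots,\pi_{m-1}\in\tilde V_n\setminus\tilde K$, of the cylinders $\{w|_{[0,m]}=\pi\}$, each having $Q_{\delta,\beta}^{G_n}$-probability $\prod_{i=0}^{m-1}p(\pi_i,\pi_{i+1})$. Similarly, the event $\{\hit_\delta<\widetilde\hit_K\}$ under $Q_{x,\beta}^{G_n}$ is the disjoint union, over the reversed paths $\pi^{\rm rev}=(x=\pi_m,\pi_{m-1},\ldots,\pi_0=\delta)$ with the same intermediate vertices constrained to $\tilde V_n\setminus\tilde K$, of cylinders with probability $\prod_{i=0}^{m-1}p(\pi_{i+1},\pi_i)$. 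Reversal $\pi\leftrightarrow\pi^{\rm rev}$ is a bijection between the two index sets.

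Applying the detailed balance relation edge by edge and telescoping yields, for each such path,
\begin{align}
\beta_\delta e^{2u_{o,\delta}^{(n)}}\prod_{i=0}^{m-1}p(\pi_i,\pi_{i+1})
=\beta_xe^{2u_{o,x}^{(n)}}\prod_{i=0}^{m-1}p(\pi_{i+1},\pi_i).
\end{align}
Summing over all admissible paths gives precisely \eqref{eq:const-intensity}, after recognizing the right-hand side as $\beta_xe^{2u_{o,x}^{(n)}}Q_{x,\beta}^{G_n}(\hit_\delta<\widetilde\hit_K)=\beta_xe_K^n(x)$. The second identity \eqref{eq:const-intensity-summed} then follows immediately by summing \eqref{eq:const-intensity} over $x\in K$ and decomposing the event that the first excursion from $\delta$ meets $K$ according to its first entry point. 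I do not anticipate any substantive obstacle: the whole argument is a standard reversibility/time-reversal computation, and the only care required is to identify the reversing measure of the discrete jump chain, which carries the additional factor $\beta_a$ relative to the reversing measure $e^{2u_{o,a}^{(n)}}$ of the continuous-time process.
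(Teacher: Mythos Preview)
Your proof is correct and follows essentially the same approach as the paper: decompose the event into finite paths from $\delta$ to $x$ avoiding $\tilde K$ in between, apply reversibility path by path, and sum. The only cosmetic difference is that the paper invokes the reversibility identity \eqref{eq:reversibility-single-path} from Lemma~\ref{le:reversibility} in the appendix, whereas you rederive the detailed balance relation for the jump chain directly.
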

\begin{proof}
We calculate 
\begin{align}
Q_{\delta,\beta}^{G_n}[\text{first excursion hits }K\text{ first in }x]
=&\sum_{\pi\in\Pi_x}Q_{\delta,\beta}^{G_n}(\underline{\pi}),
\end{align}
where we sum over the set $\Pi_x$ of all finite paths $\pi$ from 
$\delta$ to $x$ hitting $K$ for the first time in $x$ and 
visiting $\delta$ only at the start; the event that 
the process initially follows $\pi$ is denoted by $\underline\pi$. 
For 
$\pi=(\pi_0,\pi_1,\ldots,\pi_m)\in\Pi_x$, one has by the reversibility 
\eqref{eq:reversibility-single-path} from the appendix 
\begin{align}
\beta_\delta e^{2u_{o,\delta}^{(n)}} Q_{\delta,\beta}^{G_n}(\underline\pi)
=\beta_x e^{2u_{o,x}^{(n)}}
Q_{x,\beta}^{G_n}(\underline{\pi^\leftrightarrow}),
\end{align}
where $\pi^\leftrightarrow=(\pi_m,\pi_{m-1},\ldots,\pi_0)$ denotes the reversed
path. Consequently, we conclude
\begin{align}
& \beta_\delta e^{2u_{o,\delta}^{(n)}} Q_{\delta,\beta}^{G_n}[\text{first excursion hits }K\text{ first in }x]\nonumber\\
=&\beta_xe^{2u_{o,x}^{(n)}}\sum_{\pi\in\Pi_x}Q_{x,\beta}^{G_n}(\underline{\pi^\leftrightarrow})
=\beta_x e_K^n(x);
\end{align}
in the last step we replaced the sum over $Q_{x,\beta}^{G_n}(\underline{\pi^\leftrightarrow})$
by the sum of $Q_{x,\beta}^{G_n}(\underline{\pi})$, where $\pi$ runs 
over all paths from $x$ to $\delta$ which hit $\delta$ only at the end and 
reach $\delta$ before returning 
to $K$. 
\end{proof}

\begin{lemma} {\bf(Modified $K^+$-reduction of Markov jump processes - finite volume)}
\label{le:transition-rates-K-reduction-finite-volume}
Let $n\in\N$ and consider a given $\beta\in B$. 
Let $K\subset V_n$ with $o\in K$. 
We define a modified $K^+$-reduction 
$\hat w_\mod^K=(w^K(m),l_\mod^K(m))_{m\in\N_0}$ of the 
Markov jump process $\hat w$ on the finite graph $G_n$ in the 
environment $\beta$, described by the probability measure $Q^{G_n}_{o,\beta}$
just as the $K^+$-reduction in formulas 
\eqref{eq:def-j-m}--\eqref{eq:def-l-super-K}
except that the local time in $\delta$, which was ignored in 
\eqref{eq:def-l-super-K}, is now counted, but rescaled:
\begin{align}
\label{eq:def-l-mod-rescaled}
l_\mod^K(m)=\sum_{j=j_m}^{j_{m+1}-1} l(k_j)
\left(e^{-2u_{o,\delta}^{(n)}}1_{\{w(k_j)=\delta\}}+1_{\{w(k_j)\neq\delta\}}\right). 
\end{align}
Then, $\hat w_\mod^K$ is a Markov jump process on $\tilde K$ with 
respect to $Q^{G_n}_{o,\beta}$. 
Its rates for transitions $x\to y$ with different $x,y\in\tilde K$ 
are given by
\begin{align}
& Q^{G_n}_{o,\beta}[w^K(k+1)=y,l_\mod^K(k)<\ell+d\ell\,|\, 
w^K(k)=x,l_\mod^K(k)\ge\ell]
\nonumber\\
=&\left\{\begin{array}{ll}
\left(\frac12 C_{xy}e^{u_{o,y}^{(n)}-u_{o,x}^{(n)}}+\beta_xq_{K,\beta}^n(x,y)\right)
d\ell+o(d\ell) & \;\text{ for }x,y\in K,\\
\beta_xe^{-2u_{o,x}^{(n)}}e_{K,\beta}^n(x)d\ell+o(d\ell) & \;\text{ for }x\in K, y=\delta, \\
\beta_y e_{K,\beta}^n(y)d\ell+o(d\ell) & \;\text{ for }x=\delta, y\in K
\end{array}\right.
\label{eq:transition-rates-finite-volume}
\end{align}
as $d\ell\downarrow 0$. 
\end{lemma}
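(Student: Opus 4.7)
The plan is to recognize $\hat w^K_\mod$ as the classical ``trace'' of the continuous-time Markov chain with law $Q^{G_n}_{o,\beta}$ on the set $\tilde K = K\cup\{\delta\}$, followed by two cosmetic adjustments: merging consecutive visits to the same state of $\tilde K$, and rescaling the local time at $\delta$ by the factor $e^{-2u^{(n)}_{o,\delta}}$. Because the successive entrance times of $\tilde K$ are stopping times of the underlying chain, the strong Markov property shows directly that $\hat w^K_\mod$ is Markov on $\tilde K$. For a state $x\in\tilde K$, the unmerged holding time at $x$ is $\mathrm{Exp}(\beta_x)$, and after each such period an independent excursion returns to $x$ (without touching $\tilde K\setminus\{x\}$) with probability $p_x := Q^{G_n}_{x,\beta}[\widetilde\hit_{\tilde K}=\widetilde\hit_x]$. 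A geometric-in-$N$ sum of $\mathrm{Exp}(\beta_x)$ variables is $\mathrm{Exp}(\beta_x(1-p_x))$, independent of the final jump destination; the rescaling at $\delta$ just multiplies the corresponding rate by $e^{2u^{(n)}_{o,\delta}}$. So $\hat w^K_\mod$ is a genuine continuous-time Markov chain on $\tilde K$ with exponential holding times, and only the identification of the individual jump rates remains.

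The transition rate $x\to z$, for $x\in\tilde K$ and $z\in\tilde K\setminus\{x\}$, is $\beta_x\,Q^{G_n}_{x,\beta}[\widetilde\hit_{\tilde K}=\widetilde\hit_z]$ (multiplied by $e^{2u^{(n)}_{o,\delta}}$ when $x=\delta$, because of the rescaling). I decompose according to the first step $w(1)$. For $x,y\in K$ with $x\ne y$, the event $\{\widetilde\hit_{\tilde K}=\widetilde\hit_y\}$ splits into a direct jump $\{w(1)=y\}$, contributing the rate $\tfrac12 C^{(n)}_{xy}e^{u^{(n)}_{o,y}-u^{(n)}_{o,x}}$, and an excursion piece $\{w(1)\in V_n\setminus K,\,\widetilde\hit_K=\widetilde\hit_y<\hit_\delta\}$. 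The latter agrees with the defining event $\{1<\widetilde\hit_K=\widetilde\hit_y<\hit_\delta\}$ of $q^n_{K,\beta}(x,y)$, because $w(1)=\delta$ would give $\hit_\delta=1$, which is incompatible with $\widetilde\hit_y<\hit_\delta$; so this piece contributes $\beta_x q^n_{K,\beta}(x,y)$. For $x\in K$ and $y=\delta$, grouping the direct jump to $\delta$ together with the excursions through $V_n\setminus K$ that reach $\delta$ before $K$ gives simply $\beta_x\,Q^{G_n}_{x,\beta}[\hit_\delta<\widetilde\hit_K]$, which by the very definition \eqref{eq:def-e-K-n} equals $\beta_x e^{-2u^{(n)}_{o,x}}e^n_{K,\beta}(x)$.

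The only case that requires more than bookkeeping is $x=\delta$, $y\in K$. Before rescaling, the rate is $\beta_\delta\,Q^{G_n}_{\delta,\beta}[\text{first excursion hits } K \text{ first in } y]$, again by the same first-step decomposition. Multiplication by the rescaling factor $e^{2u^{(n)}_{o,\delta}}$ turns this into exactly the left-hand side of the identity \eqref{eq:const-intensity}, which collapses it to $\beta_y e^n_{K,\beta}(y)$. This is where reversibility of the underlying chain (baked into \eqref{eq:const-intensity}) really enters, and I expect it to be the only technically delicate step; everything else is pure unpacking of the definitions of $e^n_K$ and $q^n_K$ combined with the strong Markov property. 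As a sanity check at the end I would verify that the computed outgoing rates at each state sum to the holding-time rate found in the first paragraph, which follows from $\sum_{z\in\tilde K\setminus\{x\}}Q^{G_n}_{x,\beta}[\widetilde\hit_{\tilde K}=\widetilde\hit_z]=1-p_x$ together with \eqref{eq:const-intensity-summed} in the case $x=\delta$.
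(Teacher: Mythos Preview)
Your proposal is correct and follows essentially the same approach as the paper's proof: decompose transitions from $x\in K$ into direct jumps plus excursions through $V_n\setminus K$, identify these pieces with the definitions of $q_K^n$ and $e_K^n$, and handle the $\delta\to y$ case via the rescaling and the reversibility identity \eqref{eq:const-intensity}. Your version is somewhat more explicit than the paper's---you spell out the strong Markov property and the geometric-sum argument for the exponential holding times, whereas the paper simply states the rates informally---but the logical structure and the key inputs are identical.
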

\begin{proof}
The jumps from $x\in K$ to $y\in K$ originate from two sources. Either 
the original walk jumps along an edge directly from $x$ to $y$, which it 
does at rate $\frac12 C_{xy}e^{u_{o,y}^{(n)}-u_{o,x}^{(n)}}$, or it leaves $K$ at 
$x$ and reenters at $y$. 
Conditionally on jumping away from $x$, which occurs at rate $\beta_x$, 
the random walker leaves $K$ and reenters $K$ at $y$ before hitting 
$\delta$ with probability $q_{K,\beta}^n(x,y)$. This explains the second 
summand in the first line on the right-hand side of 
\eqref{eq:transition-rates-finite-volume}. The argument for transitions 
$K\ni x\to\delta$ is similar: The vertex $x\in K$ is left at rate 
$\beta_x$, and conditionally on leaving it, the probability to 
exit $K$ immediately and hitting $\delta$ before reentering $K$ equals 
$e^{-2u_{o,x}^{(n)}}e_{K,\beta}^n(x)$; the factor
$e^{-2u_{o,x}^{(n)}}$ removes the normalization $e^{2u_{o,x}^{(n)}}$ in the definition 
\eqref{eq:def-e-K-n} of $e_{K,\beta}^n(x)$. Finally, the rate of the 
original walk to leave $\delta$, without rescaling local times at $\delta$,
equals $\beta_\delta$. The rescaling with the factor $e^{-2u_{o,\delta}^{(n)}}$
yields the modified rate $\beta_\delta e^{2u_{o,\delta}^{(n)}}$.
Multiplying 
it with the probability that the first excursion from $\delta$ hits 
$K$ first in $y$, formula \eqref{eq:const-intensity} yields the 
rate $\beta_y e^n_{K,\beta}(y)$ for transitions from $\delta$ to $y$. 
\end{proof}

\begin{lemma}[$K$-reduction of Markov jump processes -- infinite volume]
\label{le:rates-reduction-path}\mbox{}\\
Consider $\beta\in B'$, cf.\ Definition \ref{def:B-prime}.  
Take a finite subset $K\subset V$ with $o\in K$. Consider a
Markov jump process with absorption having state space $K\cup\{\bot\}$, 
where $\bot$ means absorption, with the following jump rates 
\begin{align}
\label{eq:transition-rates-short1}
& \frac12 C_{xy}e^{u_{o,y}-u_{o,x}}+\beta_xq_{K,\beta}(x,y)
\quad\text{ for transitions }x\to y\text{ with } x,y\in K,\\
& \beta_xe^{-2u_{o,x}}e_{K,\beta}(x)\quad\text{ for transitions }
x\to\bot\text{ with }x\in K.
\label{eq:transition-rates-short2}
\end{align}
The law $Q_{z,\beta}^{K^+}$ of this Markov jump process started in any $z\in K$
and stopped 
immediately before being absorbed equals the law of the $K$-reduction 
$\hat w^K=(w^K,l^K)$ with respect to $Q_{z,\beta}^G$.
\end{lemma}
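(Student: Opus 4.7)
The plan is to verify directly that the $K$-reduction of the underlying Markov jump process under $Q^G_{z,\beta}$, regarded as a sequence of (vertex, holding-time) pairs in $K\cup\{\bot\}$, has the law of the Markov jump process with the stated rates, stopped the instant it would transition to $\bot$. Transience (from $\beta\in B'$) ensures that $w$ visits $K$ only finitely often almost surely, so the $K$-reduction is a finite list, and the absorption event corresponds to the original walker leaving $K$ for the last time.

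The central tool is the strong Markov property of $Q^G_{z,\beta}$, applied at the successive times when the walker starts a new maximal $K$-stay or finishes an excursion returning to the same vertex of $K$. Fixing $x\in K$ and considering the segment during which $w^K(m)=x$, I would decompose that segment as a geometric number of i.i.d.\ trials: each trial consists of an $\mathrm{Exp}(\beta_x)$ waiting time at $x$, followed by a jump whose outcome is (i) a direct move to some $y\in K\setminus\{x\}$ with probability $C_{xy}e^{u_{o,y}-u_{o,x}}/(2\beta_x)$, (ii) an excursion out of $K$ returning to some $y\in K$ (possibly $y=x$) with probability $q_{K,\beta}(x,y)$, or (iii) an excursion never returning to $K$ with probability $e^{-2u_{o,x}}e_{K,\beta}(x)$. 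The total probability of all outcomes is $1$ by a direct computation using $\beta_x=\tfrac12\sum_{y\in V}C_{xy}e^{u_{o,y}-u_{o,x}}$. Trials whose outcome is (ii) with $y=x$ continue the segment by the strong Markov property applied at the excursion's return time, yielding a geometric number of i.i.d.\ trials and an overall $\mathrm{Exp}(\beta_x(1-q_{K,\beta}(x,x)))$ holding time in the $K$-reduction, independent of the terminating outcome.

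Rephrasing this decomposition as a continuous-time Markov chain yields jump rates equal to $\beta_x$ times the probability of each terminating outcome, namely $\tfrac12 C_{xy}e^{u_{o,y}-u_{o,x}}+\beta_x q_{K,\beta}(x,y)$ to $y\in K\setminus\{x\}$ and $\beta_x e^{-2u_{o,x}}e_{K,\beta}(x)$ to $\bot$, matching \eqref{eq:transition-rates-short1} and \eqref{eq:transition-rates-short2}. Iterating across successive segments via the strong Markov property at each effective transition time shows that the $K$-reduction is Markov with the claimed law, and that its restriction to times before absorption coincides with $Q^{K^+}_{z,\beta}$. The main subtlety is to justify that, within a single segment, the i.i.d.\ geometric-sum structure cleanly separates the exponential holding time from the identity of the next vertex; this requires the strong Markov property to be applied at the random return times of excursions and relies on $q_{K,\beta}(x,x)<1$, which is guaranteed by the transience assumption.
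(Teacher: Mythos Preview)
Your proposal is correct and follows essentially the same approach as the paper's own proof, which is a terse one-paragraph sketch: identify the two sources of a transition $x\to y$ (a direct edge-jump at rate $\tfrac12 C_{xy}e^{u_{o,y}-u_{o,x}}$, or an excursion out of $K$ returning at $y$ contributing $\beta_x q_{K,\beta}(x,y)$), and similarly for $x\to\bot$. Your version is more detailed---in particular you explicitly treat the self-return excursions ($y=x$) via a geometric sum and verify the resulting holding time is $\mathrm{Exp}(\beta_x(1-q_{K,\beta}(x,x)))$, a point the paper leaves implicit---but the underlying idea is the same.
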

\begin{proof}
The proof is almost the same as the proof of Lemma 
\ref{le:transition-rates-K-reduction-finite-volume}. 
The jumps from $x$ to $y$ originate from two sources. Either the original walk
jumps along an edge directly from $x$ to $y$, which it does at rate
$\frac12 C_{xy}e^{u_{o,y}-u_{o,x}}$, or it leaves $K$ at $x$ and reenters at $y$. 
Conditionally on jumping away from $x$, which occurs at rate $\beta_x$, 
the random walker leaves $K$ and reenters $K$ at $y$ with probability
$q_{K,\beta}(x,y)$. This explains the second summand in 
\eqref{eq:transition-rates-short1}. 
The argument for \eqref{eq:transition-rates-short2} is similar. The factor
$e^{-2u_{o,x}}$ removes the normalization $e^{2u_{o,x}}$ in the definition 
\eqref{eq:def-e-K-u-infinite-volume} of $e_{K,\beta}(x)$. 
\end{proof}

\medskip
In order to phrase a slightly stronger version of the main 
Theorem \ref{thm:convergence-interlacement}, 
we define also a modified $K^+$-reduction for interlacements, which 
gives rise to the following transition probabilities described in 
Lemma \ref{le:transition-rates-K-reduction}.

\begin{lemma}[Modified $K^+$-reduction of interlacements]
\label{le:transition-rates-K-reduction}
Let $\beta\in B'$ and let 
$K\subset V$ be finite with $o\in K$. Define
a modified $K^+$-reduction $\omega^K_\mod=(w^K(k),l^K_\mod(k))_{k\in\N_0}$ 
for interlacements
$\omega$ as in \eqref{eq:typical-omega1} by the following modified version of 
\eqref{eq:def-omega-K-interlacement}, where the local time at $\delta$ is now 
counted and equals the increment of the $t$-parameter of the interlacement: 
$\omega_\mod^K$ is defined  
to be the concatenation of $\omega_\start^K$ and all 
$(\delta,t_{i_j}-t_{i_{j-1}})$, $(\hat w_{i_j}|_{\N_0})^K$ with $j$ running through 
$1,2,3,\ldots$, where we use the convention $t_{i_0}:=0$. 

The modified $K^+$-reduction $\omega^K_\mod$ is a Markov jump process 
on $\tilde K$ with respect to the law 
$\Q_{o,\beta}$ of the interlacement process in the environment $\beta$.
Its rates for transitions $x\to y$ with different 
$x,y\in\tilde K=K\cup\{\delta\}$ are given by
\begin{align}
& \Q_{o,\beta}[w^K(k+1)=y,l^K_\mod(k)<\ell+d\ell|\, w^K(k)=x,l^K_\mod(k)\ge\ell]
\nonumber\\
=&\left\{\begin{array}{ll}
\left(\frac12 C_{xy}e^{u_{o,y}-u_{o,x}}+\beta_xq_{K,\beta}(x,y)\right)
d\ell+o(d\ell) & \;\text{ for }x,y\in K,\\
\beta_xe^{-2u_{o,x}}e_{K,\beta}(x)d\ell+o(d\ell) & \;\text{ for }x\in K, y=\delta, \\
\beta_ye_{K,\beta}(y)d\ell+o(d\ell) & \;\text{ for }x=\delta, y\in K
\end{array}\right.
\label{eq:transition-rates}
\end{align}
as $d\ell\downarrow 0$. 
\end{lemma}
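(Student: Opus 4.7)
I would decompose the interlacement $\omega$ into Markovian pieces separated by ``visits'' to $\delta$, and then glue them. Concretely, the initial one-sided path $\omega_\start\sim Q^G_{o,\beta}$ and each selected loop's forward part $\hat w_{i_j}|_{\N_0}$ (with law $Q^G_{y,\beta}$ given its first-entrance point $y$) are independent Markov jump processes to which Lemma~\ref{le:rates-reduction-path} applies, producing the rates in \eqref{eq:transition-rates-short1} (for $x,y\in K$) and absorption rate \eqref{eq:transition-rates-short2} (at $\bot=\delta$). Identifying $\bot$ with $\delta$ already delivers the first two cases of \eqref{eq:transition-rates}, and the remaining task is to handle the excursions at $\delta$, i.e.\ the third case.

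\textbf{Step 1 (Poisson structure of loops hitting $K$).} By the restriction property of Poisson point processes, $1_{\hat W^*_K\times\R^0_+}\omega^\leftrightarrow$ is a Poisson point process with intensity $(1_{\hat W^*_K}\hat\nu_\beta)\times dt=\pi^*[\hat Q_{K,\beta}]\times dt$ thanks to Theorem~\ref{thm:intensity-measure}. Setting $\lambda:=\hat\nu_\beta(\hat W^*_K)=\hat Q_{K,\beta}(\hat W)$, the arrival times $(t_{i_j})_{j\in\N}$ form a rate-$\lambda$ Poisson process independent of the marks $\hat w^*_{i_j}$. Hence the successive holding times $t_{i_j}-t_{i_{j-1}}$ (which are precisely the values of $l^K_\mod$ at $\delta$) are i.i.d.\ $\mathrm{Exp}(\lambda)$ and independent of everything else.

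\textbf{Step 2 (law of a selected loop).} Setting $\ell=0$, $B_1=B_2=\hat W(\N)$ in \eqref{eq:def-Q-K} and integrating gives
\begin{align*}
\hat Q_{K,\beta}(w(0)=y)
=\beta_y e^{2u_{o,y}} Q^G_{y,\beta}(\widetilde\hit_K=\infty)
=\beta_y e_{K,\beta}(y),
\end{align*}
so summing over $y\in K$ identifies $\lambda=\sum_{y\in K}\beta_y e_{K,\beta}(y)$. Moreover, the factorized form of \eqref{eq:def-Q-K} shows that, conditional on $w(0)=y$, the forward part $(\hat w(k))_{k\ge 0}$ has law $Q^G_{y,\beta}$, independent of the backward part. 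The representative of $\hat w^*_{i_j}$ chosen in the construction of $\omega^K_\mod$ satisfies $w_{i_j}(0)\in K$ and $w_{i_j}(k)\notin K$ for $k<0$, which is exactly the representative realized by $\hat Q_{K,\beta}$ (the condition $A_K$ applied to the backward half forces $w(0)$ to be the first entrance of the loop into $K$). So the conditional joint distribution of (first entrance, forward part) of a selected loop is $(\beta_y e_{K,\beta}(y)/\lambda)\otimes Q^G_{y,\beta}$.

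\textbf{Step 3 (assembly and Markov property).} Combining Steps 1 and 2, each $\delta$-excursion in $\omega^K_\mod$ is an $\mathrm{Exp}(\lambda)$ holding time followed by a jump to $y$ with probability $\beta_y e_{K,\beta}(y)/\lambda$, giving rate $\delta\to y$ equal to $\beta_y e_{K,\beta}(y)$; this is the third line of \eqref{eq:transition-rates}. Between $\delta$-visits, the reduced path is the $K$-reduction of an independent Markov jump process with law $Q^G_{o,\beta}$ (for the initial segment) or $Q^G_{y,\beta}$ (inside a loop), to which Lemma~\ref{le:rates-reduction-path} supplies the first two lines of \eqref{eq:transition-rates}. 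The independence of the pieces from one another, together with the strong Markov property of each, implies that the concatenation $\omega^K_\mod$ is a Markov jump process on $\tilde K$ with the announced rates.

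\textbf{Main obstacle.} The delicate point is Step 2: one must unpack \eqref{eq:def-Q-K} to recognize that the event $A_K$ is attached to the \emph{backward} half of the loop, so that under $\hat Q_{K,\beta}$ the time $0$ is the first entrance to $K$, exactly as in the chosen representative for $\omega^K_\mod$; and that, in the same representation, the forward half is unconditioned with law $Q^G_{y,\beta}$. This bookkeeping is what converts the abstract total mass $\hat\nu_\beta(\hat W^*_K)$ into the explicit rate of exiting $\delta$ and the first-entrance distribution $\beta_y e_{K,\beta}(y)/\lambda$.
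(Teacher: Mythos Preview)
Your proposal is correct and follows essentially the same route as the paper's proof: decompose $\omega_\mod^K$ into the initial $K$-reduced piece $\omega_\start^K$ (handled by Lemma~\ref{le:rates-reduction-path}) and an i.i.d.\ sequence of pairs $\big((\delta,t_{i_j}-t_{i_{j-1}}),(\hat w_{i_j}|_{\N_0})^K\big)$ coming from the marked Poisson structure, identify the total mass $\lambda=\sum_{y\in K}\beta_y e_{K,\beta}(y)$ and the entrance law $\beta_y e_{K,\beta}(y)/\lambda$ from \eqref{eq:def-Q-K}, and multiply to get the $\delta\to y$ rate. Your explicit verification that the representative of $\hat w^*_{i_j}$ used in the construction coincides with the support $\hat W_K$ of $\hat Q_{K,\beta}$, and that the forward half is unconditioned $Q^G_{y,\beta}$, is exactly the bookkeeping the paper also relies on (there via \eqref{eq:total-mass-hat-Q-K} and the factorized form of \eqref{eq:def-Q-K}).
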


\paragraph{Remark.}
If we do not rescale the time in \eqref{eq:def-l-mod-rescaled}, the rate 
to jump from $\delta$ to $y$ in the last line of 
\eqref{eq:transition-rates-finite-volume} gets an additional factor 
$e^{-2u_{o,\delta}^{(n)}}$, which 
has no counterpart in the infinite volume version \eqref{eq:transition-rates}. 
We do not know almost sure convergence of this factor $e^{-2u_{o,\delta}^{(n)}}$ 
with respect to $\rho_o$ as it is an open question whether this measure is 
absolutely continuous
with respect to $\rho_\infty$. For this reason, 
we have ignored the local time at $\delta$ in the $K^+$-reduction. 

Under the assumption that $(e^{u_o^{(n)}})_{n\in\N}$ is uniformly integrable
with respect to $\rho_\infty$, which is unknown to hold, the measure $\rho_o$ is 
absolutely continuous with respect to $\rho_\infty$. In that case, 
we have a $\rho_o$-a.s.\ limit $e^{2u_{o,\delta}}$ of $(e^{2u_{o,\delta}^{(n)}})_{n\in\N}$. 
Changing then the intensity measure described in 
\eqref{eq:def-intensity-measure}
with the corresponding Radon-Nikodym derivative one could also prove not 
only convergence of $K^+$-reductions, but also of the 
\emph{modified} $K^+$-reductions, where one takes into account 
the time spent at $\delta$. 
Because all this relies on the unknown uniform integrability assumption,
we do not work out the details here. 

\medskip\noindent
\begin{proof}[Proof of Lemma \ref{le:transition-rates-K-reduction}]
A typical path of a Markov jump process on $\tilde K$ starting in $o$ 
consists of an initial piece running from $o$ to $\delta$ and then, 
independent of it, a concatenation of a sequence of i.i.d.\ pairs,
each consisting 
of an exponential waiting time at $\delta$ and, again independent of it, a 
Markovian loop around $\delta$. The $K^+$-reduction $\omega^K$ 
is indeed constructed in this way:  
\begin{itemize}
\item The initial piece is the $K$-reduction of $\omega_\start$, 
which is independent of $\omega^\leftrightarrow$. According to Lemma 
\ref{le:rates-reduction-path} it is a Markov jump process with 
transition rates given 
by \eqref{eq:transition-rates-short1} and \eqref{eq:transition-rates-short2}.
These rates coincide with the rates claimed in the first two lines of 
the right-hand side of \eqref{eq:transition-rates}.
\item The pairs $((\delta,t_{i_j}-t_{i_{j-1}}), (\hat w_{i_j}|_{\N_0})^K)$,
$j\in\N$, are i.i.d.\ as they are obtained from a decorated Poisson 
process. Being functions of $\omega^\leftrightarrow$, they are independent
of the initial piece. Consider a given $j\in\N$. Since the intensity measure of 
$1_{\hat W^*_K\times\R_+^0}\omega^\leftrightarrow$ is the product measure 
$\pi^*[\hat Q_{K,\beta}]\times dt$, the components $t_{i_j}-t_{i_{j-1}}$ and 
$(\hat w_{i_j}|_{\N_0})^K$ are independent.
\begin{itemize}
\item The waiting time $t_{i_j}-t_{i_{j-1}}$ is exponential with the total mass of 
$\pi^*[\hat Q_{K,\beta}]$ as its parameter, i.e.\ with parameter 
$\sum_{x\in K}\beta_xe_{K,\beta}(x)$, see formula \eqref{eq:total-mass-hat-Q-K} in 
the appendix.
\item The two-sided infinite path $\hat w_{i_j}$ has the law
$\hat Q_{K,\beta}/\hat Q_{K,\beta}(\hat W)$. Hence, by \eqref{eq:def-Q-K},
the law of $\hat w_{i_j}|_{\N_0}$ is given by 
\begin{align}
\frac{1}{\hat Q_{K,\beta}(\hat W)}\sum_{x\in K}\beta_xe^{2u_{o,x}}Q_{x,\beta}^G(A_K)Q_{x,\beta}^G
=\frac{\sum_{x\in K}\beta_xe_{K,\beta}(x)Q_{x,\beta}^G}{\sum_{x\in K}\beta_xe_{K,\beta}(x)}.
\end{align}
Recall the definition of the measure $Q_{x,\beta}^{K^+}$ given in Lemma 
\ref{le:rates-reduction-path}. 
The $K$-reduction of $\hat w_{i_j}|_{\N_0}$, which describes the $j$-th
excursion from $\delta$, therefore has the law 
\begin{align}
\label{eq:sum-Q-x-u-K}
\frac{\sum_{x\in K}\beta_xe_{K,\beta}(x)Q_{x,\beta}^{K^+}}{\sum_{x\in K}\beta_xe_{K,\beta}(x)}.
\end{align}
Conditionally on the starting point $x\in K$, this is just $Q_{x,\beta}^{K^+}$. 
According to Lemma \ref{le:rates-reduction-path}, it describes a Markov jump 
process with rates 
\eqref{eq:transition-rates-short1}--\eqref{eq:transition-rates-short2} 
stopped before being absorbed. Note that these transition rates do not 
depend on the starting point $x$. They coincide with the ones claimed in the 
first two lines of the right-hand side in \eqref{eq:transition-rates}.
Consequently, the law \eqref{eq:sum-Q-x-u-K} 
describes also a Markov jump process with the same transition rates, 
but with a random starting point having the law
$\sum_{x\in K}\beta_xe_{K,\beta}(x)\delta_x/\sum_{x\in K}\beta_xe_{K,\beta}(x)$.
\end{itemize}
Summarizing, jumps away from $\delta$ occur with the total rate 
$\sum_{x\in K}\beta_xe_{K,\beta}(x)$. Any such jump arrives in a given $y\in K$ with 
probability $\beta_ye_{K,\beta}(y)/\sum_{x\in K}\beta_xe_{K,\beta}(x)$. Multiplying these 
two quantities the transition rate from $\delta$ to $y$ is given by
$\beta_ye_{K,\beta}(y)$, as claimed. 
\end{itemize}
\end{proof}

\subsection{Convergence of transition rates}

\begin{theorem}[Infinite-volume limits]
\label{thm:infinite-volume-limit-rates}
For all finite subsets $K\subset V$ and all $x,y\in K$, one has 
$\rho_o$-a.s.
\begin{align}
\lim_{n\to\infty}e_K^n(x)=e_K(x), \quad 
\lim_{n\to\infty}q_K^n(x,y)=q_K(x,y),\quad 
\lim_{n\to\infty}e^{u_{o,x}^{(n)}}=e^{u_{o,x}}.
\end{align}
In particular, the finite-volume transition rates given in 
\eqref{eq:transition-rates-finite-volume} for the modified $K^+$-reduction 
of the Markov jump process in the environment $\beta$ converge $\rho_o$-a.s.\
to the corresponding infinite-volume quantities: 
\begin{align}
\frac12 C_{xy}e^{u^{(n)}_{o,y}-u^{(n)}_{o,x}}+\beta_xq^n_{K,\beta}(x,y)\konv{n\to\infty} 
&\frac12 C_{xy}e^{u_{o,y}-u_{o,x}}+\beta_xq_{K,\beta}(x,y), \\
\beta_xe^{-2u^{(n)}_{o,x}}e_{K,\beta}^n(x)\konv{n\to\infty}
&\beta_xe^{-2u_{o,x}}e_{K,\beta}(x), \\
\beta_ye_{K,\beta}^n(y)\konv{n\to\infty} 
&\beta_ye_{K,\beta}(y). 
\end{align}
\end{theorem}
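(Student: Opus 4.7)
The convergence $e^{u^{(n)}_{o,x}}\to e^{u_{o,x}}$ is exactly Lemma \ref{le:lim-u-n-finite}. Since $\rho_o(B')=1$ by Remark \ref{rem:beta-in-terms-of-u}, it suffices to prove the remaining two convergences deterministically for each fixed $\beta\in B'$; for such $\beta$ one has $u^{(n)}_{o,y}\to u_{o,y}$ for every $y\in V$ and the jump process with law $Q^G_{\cdot,\beta}$ is transient. The plan is to handle $q^n_K(x,y)$ by a direct telescoping path expansion, then to obtain $e^n_K(x)$ indirectly via a complementary-probabilities identity, after which the claimed convergence of rates follows by continuity of the relevant algebraic combinations.

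For $q^n_K(x,y)$, I would expand the event $\{1<\widetilde\hit_K=\widetilde\hit_y<\hit_\delta\}$ as a disjoint union over nearest-neighbor paths $\pi=(x,x_1,\dots,x_{m-1},y)$ with $x_1,\dots,x_{m-1}\in V_n\setminus K$ (the prohibition of visiting $\delta$ forces the interior to stay in $V_n$). The $Q^{G_n}_{x,\beta}$-probability of $\pi$ telescopes to
\[
\frac{e^{u^{(n)}_{o,y}-u^{(n)}_{o,x}}}{\prod_j 2\beta_{x_j}}\prod_j C_{x_j x_{j+1}},
\]
which differs from the corresponding $Q^G_{x,\beta}$-probability only by the endpoint factor $e^{u^{(n)}_{o,y}-u^{(n)}_{o,x}-u_{o,y}+u_{o,x}}$, \emph{independent of the interior of} $\pi$. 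As $n\to\infty$ the collection of admissible paths lying in $V_n$ increases to all admissible paths in $V$, so monotone convergence combined with $u^{(n)}_{o,\cdot}\to u_{o,\cdot}$ yields $q^n_K(x,y)\to q_K(x,y)$.

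For $e^n_K(x)$ the analogous direct expansion would involve the factor $e^{u^{(n)}_{o,\delta}}=e^{-u^{(n)}_o}$ at the last step, and the behavior of $u^{(n)}_o$ under $\rho_o$ is precisely the open issue flagged in the remark following Lemma \ref{le:transition-rates-K-reduction}; this is the main obstacle. I would bypass it by observing that on the finite graph $G_n$ the three events $\{\widetilde\hit_K=1\}$, $\{1<\widetilde\hit_K<\hit_\delta\}$, $\{\hit_\delta<\widetilde\hit_K\}$ form a $Q^{G_n}_{x,\beta}$-a.s.\ partition (since on the finite state space $\tilde V_n$ both hitting times are a.s.\ finite), while on $G$ the events $\{\widetilde\hit_K=1\}$, $\{1<\widetilde\hit_K<\infty\}$, $\{\widetilde\hit_K=\infty\}$ form a $Q^G_{x,\beta}$-a.s.\ partition (by transience at $\beta\in B'$). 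This gives
\[
Q^{G_n}_{x,\beta}(\hit_\delta<\widetilde\hit_K)=1-\sum_{y\in K}\frac{C_{xy}e^{u^{(n)}_{o,y}-u^{(n)}_{o,x}}}{2\beta_x}-\sum_{y\in K}q^n_K(x,y),
\]
with the analogous identity for $Q^G_{x,\beta}(\widetilde\hit_K=\infty)$ (using $C_{xy}=0$ when $y\not\sim x$). Every term on the right converges to its infinite-volume counterpart by the previous step and by Lemma \ref{le:lim-u-n-finite}, so $Q^{G_n}_{x,\beta}(\hit_\delta<\widetilde\hit_K)\to Q^G_{x,\beta}(\widetilde\hit_K=\infty)$; multiplying by $e^{2u^{(n)}_{o,x}}\to e^{2u_{o,x}}$ yields $e^n_K(x)\to e_K(x)$. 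The three rate-convergence statements are then immediate, since each rate in \eqref{eq:transition-rates-finite-volume} is a continuous algebraic combination of $u^{(n)}_{o,\cdot}$, $q^n_{K,\beta}$, $e^n_{K,\beta}$ and the deterministic quantities $C_{xy}$, $\beta_x$.
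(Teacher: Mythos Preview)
Your proof is correct, but it takes a genuinely different route from the paper's argument.

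The paper does not work pathwise for fixed $\beta\in B'$. Instead, it develops two conditional-expectation identities (Lemmas~\ref{le:proba-path-finite-infinite} and~\ref{le:proba-path-finite-to-delta}): for suitable path events $A$ one has $Q^{G_n}_{x,\beta}(A)\,e^{u^{(n)}_{o,x}}=E_{\rho_o}[Q^G_{x,\beta}(A')\,e^{u_{o,x}}\mid\F_n]$, where $A'$ is the natural infinite-volume analogue of $A$ (in particular, a path to $\delta$ on $G_n$ corresponds to the union of paths to $V\setminus V_n$ on $G$, using $E_{\rho_o}[e^{u_{o,y}}\mid\F_n]=e^{u^{(n)}_{o,\delta}}$ for $y\notin V_n$). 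This represents both $q_K^n(x,y)\,e^{u^{(n)}_{o,x}}$ and $e_K^n(x)\,e^{-u^{(n)}_{o,x}}$ as conditional expectations of monotone sequences, and a general super-/submartingale lemma (Lemma~\ref{eq:le-mg-convergence}) then yields a.s.\ and $L^1$ convergence simultaneously.

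Your approach is more elementary. For $q_K^n$ you use exactly the telescoping identity the paper records in \eqref{eq:proba-wrt-Q-n-pi}, but you exploit it pointwise rather than through its conditional-expectation consequence; the argument is clean because the endpoint factor $e^{u^{(n)}_{o,y}-u^{(n)}_{o,x}}$ is common to all paths and the residual path sum is monotone with a finite limit. For $e_K^n$ you sidestep the issue with $e^{u^{(n)}_{o,\delta}}$ entirely via the partition identity, whereas the paper confronts it head-on through Lemma~\ref{le:proba-path-finite-to-delta}. What the paper's route buys is a uniform mechanism that handles both quantities symmetrically and also delivers $L^1(\rho_o)$-convergence for free; what your route buys is that no martingale machinery is needed at all once $\rho_o(B')=1$ is known, and the complementary-probabilities trick makes transparent why the problematic $\delta$-factor never has to be controlled.
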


Note that just as in the last line of the transition law described in 
\eqref{eq:transition-rates-finite-volume}, 
there is no factor $e^{-2u_{o,\delta}^{(n)}}$ on the left-hand side of 
the last equation. 

The proof needs some preliminary lemmas and is given in the 
remainder of this subsection.
Recall the filtration $\F_n=\sigma(\beta_x,x\in V_n)$, $n\in\N$.

\begin{lemma}
\label{le:proba-path-finite-infinite}
Let $n\in\N$, $x,y\in V_n$, and let $\pi=(\pi_0,\pi_1,\ldots,\pi_m)$ be 
a finite path in $G_n$ from $x$ to $y$ with $\pi_k\in V_n$ for all 
$k$. Then, writing $\underline{\pi}$ for the event that the process follows 
the path $\pi$ initially, one has $\rho_o$-a.s.
\begin{align}
\label{eq:proba-path-pi}
Q_{x,\beta}^{G_n}(\underline{\pi}) e^{u_{o,x}^{(n)}}=E_{\rho_o}[ Q_{x,\beta}^G(\underline{\pi})e^{u_{o,x}}|\F_n].
\end{align}
Consequently, if $A$ is the union of countably many such events $\underline{\pi}$, 
one has $\rho_o$-a.s.
\begin{align}
\label{eq:proba-event-A}
Q_{x,\beta}^{G_n}(A) e^{u_{o,x}^{(n)}}=E_{\rho_o}[ Q_{x,\beta}^G(A)e^{u_{o,x}}|\F_n].
\end{align}
\end{lemma}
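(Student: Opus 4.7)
The plan is to exploit the explicit product formula for the discrete-skeleton probability $Q_{x,\beta}^{G_n}(\underline{\pi})$ coming from \eqref{def:Q-o-beta-G-n}, observe that multiplying by $e^{u_{o,x}^{(n)}}$ makes the $u^{(n)}$-terms telescope, and then invoke the martingale identity \eqref{eq:relation-eu-finite-vol} from Lemma \ref{le:lim-u-n-finite}.

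First I would integrate out the waiting times to obtain, for the path $\pi=(\pi_0,\ldots,\pi_m)$,
\[
Q_{x,\beta}^{G_n}(\underline{\pi})=\prod_{k=0}^{m-1}\frac{C^{(n)}_{\pi_k\pi_{k+1}}\,e^{u_{o,\pi_{k+1}}^{(n)}-u_{o,\pi_k}^{(n)}}}{2\beta_{\pi_k}},
\]
together with its infinite-volume analogue for $Q_{x,\beta}^G(\underline{\pi})$ coming from \eqref{def:Q-o-beta-G}. Since $\pi$ stays inside $V_n$, each conductance $C^{(n)}_{\pi_k\pi_{k+1}}$ equals $C_{\pi_k\pi_{k+1}}$. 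Multiplying by $e^{u_{o,\pi_0}^{(n)}}=e^{u_{o,x}^{(n)}}$ and telescoping the $u^{(n)}$-factors produces
\[
Q_{x,\beta}^{G_n}(\underline{\pi})\,e^{u_{o,x}^{(n)}}=e^{u_{o,\pi_m}^{(n)}}\prod_{k=0}^{m-1}\frac{C_{\pi_k\pi_{k+1}}}{2\beta_{\pi_k}},\qquad Q_{x,\beta}^{G}(\underline{\pi})\,e^{u_{o,x}}=e^{u_{o,\pi_m}}\prod_{k=0}^{m-1}\frac{C_{\pi_k\pi_{k+1}}}{2\beta_{\pi_k}}.
\]
The product on the right depends only on $\beta_{\pi_0},\ldots,\beta_{\pi_{m-1}}$, all of which are $\F_n$-measurable because $\pi_0,\ldots,\pi_{m-1}\in V_n$. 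Hence I can pull this product out of the conditional expectation $E_{\rho_o}[\cdot\,|\F_n]$, and \eqref{eq:proba-path-pi} reduces to the identity $e^{u_{o,\pi_m}^{(n)}}=E_{\rho_o}[e^{u_{o,\pi_m}}|\F_n]$, which is precisely the first equality of \eqref{eq:relation-eu-finite-vol} applied with target vertex $\pi_m\in V_n$.

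For the extension \eqref{eq:proba-event-A}, I would decompose $A$ as a countable \emph{disjoint} union of cylinder events $\underline{\pi_j}$ (after refining by path length if necessary). Both sides of \eqref{eq:proba-event-A} are $\sigma$-additive in $A$: on the left because $Q_{x,\beta}^{G_n}$ is a probability measure and the prefactor $e^{u_{o,x}^{(n)}}$ is independent of $A$; on the right by monotone convergence applied inside the conditional expectation, which is justified by the integrable dominating bound $e^{u_{o,x}}Q_{x,\beta}^G(A)\le e^{u_{o,x}}\in L^1(\rho_o)$ from \eqref{eq:relation-eu-finite-vol}. Summing the path-wise identity \eqref{eq:proba-path-pi} over $j$ then yields \eqref{eq:proba-event-A}. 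I do not anticipate a real obstacle here: the lemma is a bookkeeping identity marrying the product structure of the jump-process path probabilities to the martingale property of $e^{u_{o,x}^{(n)}}$, and the only subtle point is the measurability of the $\beta$-factors in the telescoped product, which is automatic from the assumption that $\pi$ lies in $V_n$.
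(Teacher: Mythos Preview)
Your proposal is correct and follows essentially the same route as the paper: the paper also computes the telescoped product formula for $Q_{x,\beta}^{G_n}(\underline{\pi})$ and $Q_{x,\beta}^{G}(\underline{\pi})$, pulls the $\F_n$-measurable factor $\prod_{k}\tfrac{C_{\pi_k\pi_{k+1}}}{2\beta_{\pi_k}}$ out of the conditional expectation, and reduces \eqref{eq:proba-path-pi} to the martingale identity $e^{u_{o,y}^{(n)}}=E_{\rho_o}[e^{u_{o,y}}\mid\F_n]$. For \eqref{eq:proba-event-A} the paper makes the disjoint decomposition explicit by discarding any $\pi^{(i)}$ that has another $\pi^{(j)}$ as an initial segment, which is a concrete realization of your ``refining'' step, and then sums via monotone convergence just as you do.
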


Note that $\underline\pi$ on the left-hand side in \eqref{eq:proba-path-pi}
is understood as an event in $\hat W_n^\to$, while on the right-hand side in 
\eqref{eq:proba-path-pi} it is understood as an event in $\hat W^\to$.

\medskip\noindent
\begin{proof}[Proof of Lemma \ref{le:proba-path-finite-infinite}]
Using $C^{(n)}_{ab}=C_{ab}$ for all $a,b\in V_n$, we calculate 
\begin{align}
\label{eq:proba-wrt-Q-n-pi}
Q_{x,\beta}^{G_n}(\underline{\pi})=\prod_{k=0}^{m-1}\frac{C^{(n)}_{\pi_k\pi_{k+1}}}{2\beta_{\pi_k}}
e^{u_{o,\pi_{k+1}}^{(n)}-u_{o,\pi_k}^{(n)}}
=e^{u_{o,y}^{(n)}-u_{o,x}^{(n)}}\prod_{k=0}^{m-1}\frac{C_{\pi_k\pi_{k+1}}}{2\beta_{\pi_k}}.
\end{align}
Similarly, we obtain 
\begin{align}
Q_{x,\beta}^G(\underline{\pi})
=e^{u_{o,y}-u_{o,x}}\prod_{k=0}^{m-1}\frac{C_{\pi_k\pi_{k+1}}}{2\beta_{\pi_k}}.
\end{align}
Since all $\beta_{\pi_k}$ are $\F_n$-measurable, the claim 
\eqref{eq:proba-path-pi} follows from 
the equation $e^{u_{o,y}^{(n)}}=E_{\rho_o}[ e^{u_{o,y}}|\F_n]$ given in 
\eqref{eq:relation-eu-finite-vol}.

Taking a countable union $A=\bigcup_{i\in I}\underline{\pi^{(i)}}$ with different $\pi^{(i)}$
and a countable index set
$I$, we may drop all $\pi^{(i)}$ for which there is another $\pi^{(j)}$, 
$j\neq i$, 
being an initial piece of $\pi^{(i)}$. Let $J\subseteq I$ denote the set of 
all remaining indices. Then, $A=\bigcup_{i\in J}\underline{\pi^{(i)}}$ is a 
countable
union of pairwise disjoint events. The claim \eqref{eq:proba-event-A} then 
follows from \eqref{eq:proba-path-pi} and monotone convergence.
\end{proof}

\begin{lemma}
\label{le:proba-path-finite-to-delta}
Let $n\in\N$, $x\in V_n$, and let $\pi=(\pi_0,\pi_1,\ldots,\pi_m)$ be 
a finite path in $G_n$ from $x$ to $\delta$ with $\pi_k\in V_n$ for 
all $0\le k\le m-1$. Let $\Pi_\pi$ denote the set of finite paths in 
the infinite graph $G$ of the form $(\pi_0,\pi_1,\ldots,\pi_{m-1},y)$ 
with $y\not\in V_n$. Let $\underline{\Pi_\pi}$ denote the event that 
the process follows a path in $\Pi_\pi$ initially. 
Then, one has $\rho_o$-a.s.
\begin{align}
Q_{x,\beta}^{G_n}(\underline{\pi}) e^{u_{o,x}^{(n)}}
=E_{\rho_o}[ Q_{x,\beta}^G(\underline{\Pi_\pi})e^{u_{o,x}}|\F_n].
\end{align}
\end{lemma}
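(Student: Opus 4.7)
The plan is to compute both sides of the identity explicitly via the product formula for the transition kernel, analogously to the previous lemma, and then apply the martingale identity \eqref{eq:relation-eu-finite-vol} coupled with the boundary convention $u_y^{(n)}=0$ for $y\in V\setminus V_n$.

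First I would expand the left-hand side. By the same computation as in \eqref{eq:proba-wrt-Q-n-pi}, and using the telescoping $\sum_{k=0}^{m-1}(u_{o,\pi_{k+1}}^{(n)}-u_{o,\pi_k}^{(n)})=u_{o,\delta}^{(n)}-u_{o,x}^{(n)}$ together with the fact that $C^{(n)}_{\pi_k\pi_{k+1}}=C_{\pi_k\pi_{k+1}}$ for $0\le k\le m-2$ and $C^{(n)}_{\pi_{m-1}\delta}=\sum_{y\in V\setminus V_n}C_{\pi_{m-1}y}$, one obtains $\rho_o$-a.s.
\begin{align*}
Q_{x,\beta}^{G_n}(\underline{\pi})\,e^{u_{o,x}^{(n)}}
= e^{u_{o,\delta}^{(n)}}\left[\prod_{k=0}^{m-2}\frac{C_{\pi_k\pi_{k+1}}}{2\beta_{\pi_k}}\right]
\frac{1}{2\beta_{\pi_{m-1}}}\sum_{y\in V\setminus V_n}C_{\pi_{m-1}y}.
\end{align*}

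Next I would decompose $\underline{\Pi_\pi}$ as the disjoint union over $y\in V\setminus V_n$ of the events $\underline{(\pi_0,\dots,\pi_{m-1},y)}$ and apply the analogue of \eqref{eq:proba-wrt-Q-n-pi} on the infinite graph $G$, yielding
\begin{align*}
Q_{x,\beta}^{G}(\underline{\Pi_\pi})\,e^{u_{o,x}}
=\left[\prod_{k=0}^{m-2}\frac{C_{\pi_k\pi_{k+1}}}{2\beta_{\pi_k}}\right]
\frac{1}{2\beta_{\pi_{m-1}}}\sum_{y\in V\setminus V_n}C_{\pi_{m-1}y}\,e^{u_{o,y}}.
\end{align*}
Since all $\beta_{\pi_k}$ are $\F_n$-measurable, the pre-sum factor is $\F_n$-measurable and can be pulled out of $E_{\rho_o}[\,\cdot\,|\F_n]$. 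Monotone convergence then lets me interchange the conditional expectation with the (possibly infinite) sum over $y$.

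The final step is to identify $E_{\rho_o}[e^{u_{o,y}}|\F_n]$ for $y\in V\setminus V_n$. By \eqref{eq:relation-eu-finite-vol} this equals $e^{u_{o,y}^{(n)}}=e^{u_y^{(n)}-u_o^{(n)}}$, and since $u_y^{(n)}=0$ by the convention introduced after \eqref{eq:def-psi-n} together with $u_\delta^{(n)}=0$, this common value equals $e^{u_{o,\delta}^{(n)}}$, independently of $y$. Factoring this out of the sum reproduces exactly the expression derived for the left-hand side, completing the proof. There is no serious obstacle: the only subtle point is noting that the boundary convention $u_y^{(n)}=u_\delta^{(n)}=0$ makes the conditional expectation constant in $y\in V\setminus V_n$, which is precisely what allows the sum $\sum_{y\in V\setminus V_n}C_{\pi_{m-1}y}$ to collapse to the wired weight $C^{(n)}_{\pi_{m-1}\delta}$.
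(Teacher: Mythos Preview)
Your proof is correct and follows essentially the same route as the paper: both compute each side via the product formula for transition probabilities, pull out the $\F_n$-measurable factor $\prod_k(2\beta_{\pi_k})^{-1}$, apply the martingale identity \eqref{eq:relation-eu-finite-vol} to get $E_{\rho_o}[e^{u_{o,y}}\mid\F_n]=e^{u_{o,y}^{(n)}}=e^{u_{o,\delta}^{(n)}}$ for $y\notin V_n$, and then collapse the sum to $C^{(n)}_{\pi_{m-1}\delta}$. The only cosmetic difference is that you invoke monotone convergence for the sum over $y$, whereas the paper simply writes a finite sum---since $G$ is locally finite, $\pi_{m-1}$ has only finitely many neighbors outside $V_n$, so no limit argument is actually needed.
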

\begin{proof}
Similarly to \eqref{eq:proba-wrt-Q-n-pi}, we obtain 
\begin{align}
\label{eq:aux-equ}
Q_{x,\beta}^{G_n}(\underline{\pi})e^{u_{o,x}^{(n)}}= & 
\left(\prod_{k=0}^{m-1}\frac{1}{2\beta_{\pi_k}}\right)
\left(\prod_{k=0}^{m-2} C_{\pi_k\pi_{k+1}}\right)
\cdot C_{\pi_{m-1}\delta}^{(n)}\, e^{u_{o,\delta}^{(n)}}
\end{align}
and for any path $\zeta\in\Pi_\pi$ from $x$ to $y\notin V_n$
\begin{align}
Q_{x,\beta}^G(\underline{\zeta})e^{u_{o,x}}= & 
\left(\prod_{k=0}^{m-1}\frac{1}{2\beta_{\pi_k}}\right)
\left(\prod_{k=0}^{m-2} C_{\pi_k\pi_{k+1}}\right)
\cdot C_{\pi_{m-1}y}\, e^{u_{o,y}}.
\end{align}
Since $\prod_{k=0}^{m-1}\beta_{\pi_k}$ is $\F_n$-measurable, it follows 
\begin{align}
E_{\rho_o}[Q_{x,\beta}^G(\underline{\Pi_\pi})e^{u_{o,x}}|\F_n]
= & \sum_{\zeta\in\Pi_\pi}E_{\rho_o}[Q_{x,\beta}^G(\underline{\zeta})e^{u_{o,x}}|\F_n]
\nonumber\\
=&\left(\prod_{k=0}^{m-1}\frac{1}{2\beta_{\pi_k}}\right)
\left(\prod_{k=0}^{m-2} C_{\pi_k\pi_{k+1}}\right)
\sum_{y\in V\setminus V_n} C_{\pi_{m-1}y}\, 
E_{\rho_o}\left[e^{u_{o,y}}|\F_n\right].
\label{eq:cond-exp-Q-Pi-pi}
\end{align}
Let $y\in V\setminus V_n$. Using the martingale representation
\eqref{eq:relation-eu-finite-vol} and the definition \eqref{eq:def-u} of 
$u_{o,y}^{(n)}$ and $u_{o,\delta}^{(n)}$ together with $u_y^{(n)}=u_\delta^{(n)}=0$, 
cf.\ \eqref{eq:def-psi-n}, yields $\rho_o$-a.s.
\begin{align}
E_{\rho_o}\left[e^{u_{o,y}}|\F_n\right]
=e^{u_{o,y}^{(n)}}=e^{u_y^{(n)}-u_o^{(n)}}=
e^{u_\delta^{(n)}-u_o^{(n)}}=e^{u_{o,\delta}^{(n)}}.
\end{align}
Using the definition of $C^{(n)}_{\pi_{m-1}\delta}$ described above 
\eqref{eq:def-W-n-to}, we obtain $\rho_o$-a.s.
\begin{align}
\sum_{y\in V\setminus V_n} C_{\pi_{m-1}y}\, 
E_{\rho_o}\left[e^{u_{o,y}}|\F_n\right]
=\sum_{y\in V\setminus V_n} C_{\pi_{m-1}y}\, e^{u_{o,\delta}^{(n)}}
=C^{(n)}_{\pi_{m-1}\delta}\, e^{u_{o,\delta}^{(n)}}.
\end{align}
Inserting this in \eqref{eq:cond-exp-Q-Pi-pi} and comparing the result with 
\eqref{eq:aux-equ} the claim follows. 
\end{proof}

The following general lemma on conditional expectations of monotone sequences
is needed in the sequel. 

\begin{lemma}
\label{eq:le-mg-convergence}
On some probability space, let $L^1\ni X_n\ge 0$, $n\in\N$, be a decreasing 
or an increasing sequence with the pointwise limit $\lim_{n\to\infty}X_n=X\in L^1$. 
Let $(\G_n)_{n\in\N}$ be a filtration such that all $X_n$
are measurable with respect to $\sigma(\bigcup_n\G_n)$. Then, one has 
\begin{align}
\lim_{n\to\infty}E[X_n|\G_n]=X\quad\text{ a.s.\ and in }L^1. 
\end{align}
\end{lemma}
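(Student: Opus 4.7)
The plan is to split $E[X_n|\G_n]$ into a main part and an error part, handle the main part by Lévy's upward martingale convergence theorem, and control the error by exploiting monotonicity.

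Write $\G_\infty:=\sigma(\bigcup_n\G_n)$. By assumption $X$ is $\G_\infty$-measurable (as a pointwise limit of the $X_n$). The decomposition I would use is
\begin{align}
E[X_n|\G_n]=E[X|\G_n]+E[X_n-X|\G_n].
\end{align}
By Lévy's upward theorem, since $X\in L^1$ is $\G_\infty$-measurable, the first summand converges to $X$ both almost surely and in $L^1$. So it remains to prove that $E[X_n-X|\G_n]\to 0$ a.s.\ and in $L^1$.

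For the a.s.\ statement, treat the two monotone cases in parallel by setting $Y_n:=|X_n-X|$; in the decreasing case $Y_n=X_n-X$ and in the increasing case $Y_n=X-X_n$, and in both cases $Y_n\downarrow 0$ pointwise with $Y_n\in L^1$. By Jensen's inequality for conditional expectations, $|E[X_n-X|\G_n]|\le E[Y_n|\G_n]$. Fix $m\in\N$. For every $n\ge m$, monotonicity gives $Y_n\le Y_m$ and hence $E[Y_n|\G_n]\le E[Y_m|\G_n]$. Taking $\limsup_{n\to\infty}$ and using Lévy's upward theorem applied to the fixed random variable $Y_m$ (which is $\G_\infty$-measurable because each $X_k$ is),
\begin{align}
\limsup_{n\to\infty} E[Y_n|\G_n]\le \lim_{n\to\infty}E[Y_m|\G_n]=E[Y_m|\G_\infty]=Y_m\quad\text{a.s.}
\end{align}
Letting $m\to\infty$ and using $Y_m\downarrow 0$ yields $E[Y_n|\G_n]\to 0$ a.s., hence also $E[X_n-X|\G_n]\to 0$ a.s.

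For the $L^1$ statement, I would simply estimate
\begin{align}
\|E[X_n|\G_n]-X\|_1\le \|E[X_n-X|\G_n]\|_1+\|E[X|\G_n]-X\|_1\le \|X_n-X\|_1+\|E[X|\G_n]-X\|_1,
\end{align}
where the first term tends to $0$ by monotone convergence (in the increasing case) respectively dominated convergence with dominant $X_1$ (in the decreasing case), and the second term tends to $0$ by the $L^1$ part of Lévy's theorem.

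The only mildly delicate point is the error term $E[X_n-X|\G_n]$: one cannot directly invoke a martingale convergence statement because the integrand depends on $n$. The key trick is the two-parameter argument, freezing $m$ while sending $n\to\infty$ via Lévy, then sending $m\to\infty$; monotonicity is exactly what makes the bound $Y_n\le Y_m$ available for $n\ge m$. Apart from this, the proof is routine.
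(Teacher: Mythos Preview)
Your proof is correct. The $L^1$ argument is essentially identical to the paper's. For the a.s.\ part, however, the paper takes a different route: it simply observes that $(E[X_n\mid\G_n])_{n\in\N}$ is a non-negative supermartingale (decreasing case) or an $L^1$-bounded submartingale (increasing case), hence converges a.s., and then uses the already established $L^1$ convergence to identify the limit as $X$. Your two-parameter ``freeze $m$, send $n\to\infty$, then $m\to\infty$'' argument bypasses the super/submartingale observation entirely. The paper's approach is a bit shorter once one notices the super/submartingale structure; your approach has the advantage of identifying the a.s.\ limit directly without relying on the $L^1$ part, and it avoids the small extra check of $L^1$-boundedness in the increasing case.
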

\begin{proof}
We have $\| X_n-X\|_1\konv{n\to\infty}0$ by dominated convergence, 
and hence 
\begin{align}
\| E[X_n|\G_n]-E[X|\G_n]\|_1\le \| X_n-X\|_1\konv{n\to\infty}0.
\end{align}
Moreover, 
\begin{align}
\| E[X|\G_n]-X\|_1\konv{n\to\infty}0
\end{align}
by the martingale convergence theorem. Together, it follows 
\begin{align}
\| E[X_n|\G_n]-X\|_1
\konv{n\to\infty}0.
\end{align}
This proves convergence in $L^1$. Finally, $(E[X_n|\G_n])_{n\in\N}$
is a non-negative super- or submartingale, given that $(X_n)_{n\in\N}$
is decreasing or increasing, respectively. Hence it converges a.s.\
as well. 
\end{proof}

\medskip\noindent
\begin{proof}[Proof of Theorem \ref{thm:infinite-volume-limit-rates}]
Fix a finite set $K\subset V$ and $x,y\in K$. 
Recall that $u_{o,x}=\lim_{n\to\infty} u_{o,x}^{(n)}\in\R$ holds 
$\rho_o$-a.s.\ by Lemma \ref{le:lim-u-n-finite}. 
In particular, $\lim_{n\to\infty}e^{u_{o,x}^{(n)}}=e^{u_{o,x}}$
$\rho_o$-a.s.

Given $n\in\N$, the event 
$A=\{1<\widetilde\hit_K=\widetilde\hit_y<\hit_\delta\}\subseteq\hat W^\to_n$ 
of returning to $K$ at $y$ restricted to decorated paths starting at $x$ 
can be written as a countable union of events $\underline\pi$ with 
finite paths $\pi$ from $x$ to $y$ which do not hit $\delta$.   
In particular, equation \eqref{eq:proba-event-A} holds for it. 
This yields 
\begin{align}
q_{K,\beta}^n(x,y) 
= & Q_{x,\beta}^{G_n}(1<\widetilde\hit_K=\widetilde\hit_y<\hit_\delta)\nonumber\\
= & e^{-u_{o,x}^{(n)}}
E_{\rho_o}[ Q_{x,\beta}^G(1<\widetilde\hit_K=\widetilde\hit_y<\hit_{V\setminus V_n})e^{u_{o,x}}|\F_n]. 
\end{align}
Consider the increasing sequence 
$X_n=Q_{x,\beta}^G(1<\widetilde\hit_K=\widetilde\hit_y<\hit_{V\setminus V_n})e^{u_{o,x}}\ge 0$, $n\in\N$. 
Its pointwise limit as $n\to\infty$ is given by 
\begin{align}
X=Q_{x,\beta}^G(1<\widetilde\hit_K=\widetilde\hit_y<\infty)e^{u_{o,x}}=q_{K,\beta}(x,y)e^{u_{o,x}}.
\end{align} 
Clearly, all $u_{o,z}$, $z\in V$, are $\F_\infty$-measurable
and hence the same is true for all $X_n$. Furthermore, 
$E_{\rho_o}[e^{u_{o,x}}]=1<\infty$ by \eqref{eq:relation-eu-finite-vol}.
Hence, $X_n,X\in L^1(\rho_o)$. An application of Lemma 
\ref{eq:le-mg-convergence} yields $\rho_o$-a.s.
\begin{align}
\lim_{n\to\infty}q_{K,\beta}^n(x,y) 
= & e^{-u_{o,x}} X = q_{K,\beta}(x,y).
\end{align}

Similarly, the event $\{\hit_\delta<\widetilde\hit_K\}\subseteq\hat W^\to_n$ 
of hitting $\delta$ 
before returning to $K$ restricted to decorated paths starting at $x$ can 
be written as a countable union of finite paths from $x$ to $\delta$. 
Hence, inserting the definition \eqref{eq:def-e-K-n} of $e_K^n(x)$ and 
applying Lemma \ref{le:proba-path-finite-to-delta} and monotone
convergence, we obtain 
\begin{align}
e_K^n(x) 
=e^{2u_{o,x}^{(n)}}Q_{x,\beta}^{G_n}(\hit_\delta<\widetilde\hit_K)
=e^{u_{o,x}^{(n)}}E_{\rho_o}[ Q_{x,\beta}^G(\hit_{V\setminus V_n}<\widetilde\hit_K)e^{u_{o,x}}|\F_n]. 
\end{align}
We apply Lemma \ref{eq:le-mg-convergence} with the decreasing sequence
\begin{align}
X_n=Q_{x,\beta}^G(\hit_{V\setminus V_n}<\widetilde\hit_K)e^{u_{o,x}}
\konv{n\to\infty}Q_{x,\beta}^G(\widetilde\hit_K=\infty)e^{u_{o,x}}
\end{align}
in $L^1(\rho_o)$. This yields the following $\rho_o$-a.s., using the definition 
\eqref{eq:def-e-K-u-infinite-volume} of $e_k(x)$:
\begin{align}
\lim_{n\to\infty}e_K^n(x) 
=\lim_{n\to\infty}e^{u_{o,x}^{(n)}}\cdot Q_{x,\beta}^G(\widetilde\hit_K=\infty)e^{u_{o,x}}
=e_K(x).
\end{align}
\end{proof}

\subsection{Proof of Theorem \ref{thm:convergence-interlacement}}

The following theorem shows that the finite-dimensional distributions of the 
modified $K^+$-reduction of VRJP on $G_n$ converge weakly as $n\to\infty$
to the finite-dimensional distributions of the modified $K^+$-reduction of the 
random interlacement.

\begin{theorem}{\bf (Convergence of modified $K^+$-reductions)}
\label{thm:convergence-interlacement-modified}
For any finite $K\subset V$ with $o\in K$ and $J\in\N$, one has 
\begin{align}
\cL_{P^n_o}\left(\hat w_\mod^K|_{[0,J]}\right)
\stackrel{\text{w}}{\longrightarrow}
\cL_{\pstrich_o}\left(\omega_\mod^K|_{[0,J]}\right)
\quad\text{ as }n\to\infty. 
\end{align}
\end{theorem}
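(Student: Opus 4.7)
My plan is to condition on the random environment and reduce the theorem to a deterministic convergence for Markov jump processes on the finite set $\tilde K=K\cup\{\delta\}$. By Lemma~\ref{le:representation-mu-o-n} and the definition of $\pstrich_o$, for any bounded continuous test function $F$ on $(\tilde K\times\R_+)^{J+1}$ I can write
\begin{align*}
E_{P^n_o}\bigl[F(\hat w_\mod^K|_{[0,J]})\bigr]
&=\int_{\R_+^V} E_{Q^{G_n}_{o,\beta}}\bigl[F(\hat w_\mod^K|_{[0,J]})\bigr]\,\rho_o(d\beta),\\
E_{\pstrich_o}\bigl[F(\omega_\mod^K|_{[0,J]})\bigr]
&=\int_{\R_+^V} E_{\Q_{o,\beta}}\bigl[F(\omega_\mod^K|_{[0,J]})\bigr]\,\rho_o(d\beta).
\end{align*}
Hence it suffices to prove convergence of the two integrands for $\rho_o$-almost every $\beta$ and then invoke dominated convergence using the uniform bound $\|F\|_\infty$.

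For $\rho_o$-a.e.\ $\beta$ (so in particular $\beta\in B'$), Lemma~\ref{le:transition-rates-K-reduction-finite-volume} identifies $\hat w_\mod^K$ under $Q^{G_n}_{o,\beta}$ as a Markov jump process on $\tilde K$ started at $o$ with rates $\lambda_n(\beta;x,y)$ given by \eqref{eq:transition-rates-finite-volume}, and Lemma~\ref{le:transition-rates-K-reduction} identifies $\omega_\mod^K$ under $\Q_{o,\beta}$ as the analogous Markov jump process with rates $\lambda(\beta;x,y)$ given by \eqref{eq:transition-rates}. The joint law of the first $J+1$ decorated states of a Markov jump process on a finite state space has an explicit density, a product of jump probabilities $\lambda(x,y)/\lambda(x)$ and exponential holding-time densities $\lambda(x)e^{-\lambda(x)\ell}$ with $\lambda(x)=\sum_{y\neq x}\lambda(x,y)$, which is jointly continuous in the finitely many rates $\lambda(x,y)$ at every fixed decorated path.

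Theorem~\ref{thm:infinite-volume-limit-rates} supplies exactly the required $\rho_o$-a.s.\ convergence $\lambda_n(\beta;\cdot,\cdot)\to\lambda(\beta;\cdot,\cdot)$; the non-degeneracy $\lambda(\beta;x)>0$ for every $x\in\tilde K$ is clear from the explicit formulas in \eqref{eq:transition-rates} (for $x=\delta$ this uses $\sum_{y\in K}\beta_y e_{K,\beta}(y)>0$, guaranteed by \eqref{eq:non-triviality-nu-beta}). Consequently
\begin{align*}
E_{Q^{G_n}_{o,\beta}}\bigl[F(\hat w_\mod^K|_{[0,J]})\bigr]
\konv{n\to\infty}
E_{\Q_{o,\beta}}\bigl[F(\omega_\mod^K|_{[0,J]})\bigr]
\end{align*}
for $\rho_o$-a.e.\ $\beta$, and dominated convergence applied with the probability measure $\rho_o$ concludes the proof.

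I do not expect a substantive obstacle at this final step: the heavy lifting is already done by the mixture representations (notably with the \emph{same} measure $\rho_o$ in Lemma~\ref{le:representation-mu-o-n}), the Markov characterizations of the two reductions, and the pointwise convergence of rates of Theorem~\ref{thm:infinite-volume-limit-rates} (which itself rests on the uniform integrability of $e^{u_{o,x}^{(n)}}$ from Lemma~\ref{le:uniform-integrability}). The only routine verification is the continuity in the rates of the finite-dimensional density on $\tilde K$, which is immediate since $\tilde K$ is finite and the limiting holding rates are strictly positive.
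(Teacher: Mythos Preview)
Your proof is correct and follows essentially the same approach as the paper: both condition on the environment via the common mixing measure $\rho_o$, identify the two reductions as Markov jump processes on $\tilde K$ with the rates from Lemmas~\ref{le:transition-rates-K-reduction-finite-volume} and~\ref{le:transition-rates-K-reduction}, invoke Theorem~\ref{thm:infinite-volume-limit-rates} for $\rho_o$-a.s.\ convergence of the rates, and then apply dominated convergence in $\rho_o$. The only cosmetic difference is that the paper restricts to compactly supported test functions, provides the explicit uniform bound $r^n_{\pi_j\pi_{j+1}}e^{-r^n_{\pi_j,*}\ell(j)}\le Me^{-1}$ for the inner integral, and upgrades vague to weak convergence at the end, whereas you pass directly through continuity of the finite-dimensional law of a finite-state Markov jump process in its rates.
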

\begin{proof}
Let $\beta\in B'$ be an infinite-volume environment. 
For $x,y\in\tilde K=K\cup\{\delta\}$, let $r_{x,y}^n=r_{x,y}^n(\beta)$ denote 
the rate given in 
\eqref{eq:transition-rates-finite-volume} for the modified $K^+$-reduction 
of the Markov jump process on $G_n$ and let 
$r_{x,y}^\infty=r_{x,y}^\infty(\beta)$ be the 
corresponding rate in infinite volume given in \eqref{eq:transition-rates}
for the modified $K^+$-reduction of the interlacement. 
Let $r_{x,*}^n=\sum_{y\in\tilde K}r_{x,y}^n$, $n\in\N\cup\{\infty\}$, denote 
the total rate to jump away from $x$. 
By Theorem \ref{thm:infinite-volume-limit-rates} the transition rates converge
$\rho_o$-a.s.: $\lim_{n\to\infty}r_{x,y}^n=r_{x,y}^\infty$ for all $x,y$ in the finite
set $\tilde K$. Let $\Pi_J^{\tilde K}$ denote the set of all paths 
$\pi=(\pi_0,\pi_1,\ldots,\pi_{J+1})\in\tilde K^{[0,J+1]}$ in $\tilde K$ which 
start at $o$. Clearly, $\Pi_J^{\tilde K}$ is a finite set. 
Hence, using Lemma \ref{le:representation-mu-o-n}, 
for any continuous function $f:(\tilde K\times\R_+)^{[0,J]}\to\R$ 
with compact support $\supp f\subseteq(\tilde K\times [M^{-1},M])^{[0,J]}$
for some $M>0$, we have 
\begin{align}
& E_{P^n_o}\left[f(\hat w_\mod^K|_{[0,J]})\right]
=\int_{\R^V_+}\int_{\hat W^\to} f(\hat w_\mod^K|_{[0,J]})\, Q_{o,\beta}^{G_n}(d\hat w) \, d\rho_o
\nonumber\\
= & \int_{\R^V_+}\sum_{\pi\in\Pi_J^{\tilde K}}\int_{\R_+^{[0,J]}} f(\pi|_{[0,J]},\ell)
\prod_{j=0}^J r^n_{\pi_j\pi_{j+1}}e^{-r^n_{\pi_j,*}\ell(j)}
\, d\ell\, d\rho_o\konv{n\to\infty}E_{\pstrich_o}\left[f(\omega_\mod^K|_{[0,J]})\right];
\end{align}
the used dominated convergence is justified by the fact that 
$f$ is compactly supported together with the following bound on the support 
of $f$:
\begin{align}
0\le & r^n_{\pi_j\pi_{j+1}}e^{-r^n_{\pi_j*}\ell(j)}
\le r^n_{\pi_j*}e^{-r^n_{\pi_j*}\ell(j)}\nonumber\\
\le & \ell(j)^{-1}\sup_{x>0}xe^{-x}
\le (e\ell(j))^{-1}\le Me^{-1}. 
\end{align}
In other words, $\cL_{P^n_o}\left(\hat w^K_\mod|_{[0,J]}\right)$
converges vaguely to $\cL_{\pstrich_o}\left(\omega^K_\mod|_{[0,J]}\right)$
as $n\to\infty$. Because vague convergence of a sequence of 
probability measures to a \emph{probability} measure implies 
weak convergence, the claim follows. 
\end{proof}

\medskip\noindent
\begin{proof}[Proof of Theorem \ref{thm:convergence-interlacement}]
Because the original $K^+$-reduction is obtained from its modified 
version just by ignoring the local times at $\delta$, Theorem 
\ref{thm:convergence-interlacement} is an immediate consequence of 
Theorem \ref{thm:convergence-interlacement-modified}. 
\end{proof}

\begin{appendix}
\section{Poisson point process in a fixed environment}
\label{sec:appendix-ppp}

In this appendix, we give a proof of Theorem \ref{thm:intensity-measure}.

\begin{lemma}[Reversibility]
\label{le:reversibility}
For all $\beta\in B'$, $m\in\N$, and all measurable sets 
$A\subseteq(V\times\R_+)^{[0,m]}$, one has
\begin{align}
\sum_{x\in V}\beta_xe^{2u_{o,x}}Q_{x,\beta}^G[(\hat w(k))_{k\in[0,m]}\in A]
=\sum_{x'\in V}\beta_{x'}e^{2u_{o,x'}}Q_{x',\beta}^G[(\hat w(m-k))_{k\in[0,m]}\in A].
\end{align}
In particular, for $x,x'\in V$ and 
$A\subseteq(\{x\}\times\R_+)\times(V\times\R_+)^{[1,m-1]}\times(\{x'\}\times\R_+)$,
one has 
\begin{align}
\label{eq:reversibility-reduced-claim}
\beta_xe^{2u_{o,x}}Q_{x,\beta}^G[(\hat w(k))_{k\in[0,m]}\in A]
=\beta_{x'}e^{2u_{o,x'}}Q_{x',\beta}^G[(\hat w(m-k))_{k\in[0,m]}\in A].
\end{align}
An analogous result holds for $\beta\in B$ and the Markov jump process 
with the law
$Q^{G_n}_{x,\beta}$ on the graph $G_n$ with weights $C^{(n)}$. In particular, 
for any $x,x'\in\tilde V_n$ and any path $\pi=(\pi_0,\pi_1,\ldots,\pi_m)$ 
in $G_n$ from $x$ to $x'$, one has 
\begin{align}
\label{eq:reversibility-single-path}
\beta_{x'} e^{2u_{o,x'}^{(n)}} Q_{x',\beta}^{G_n}(\underline\pi)
=\beta_x e^{2u_{o,x}^{(n)}}
Q_{x,\beta}^{G_n}(\underline{\pi^\leftrightarrow}).
\end{align}
\end{lemma}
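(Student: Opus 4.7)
The plan is to read off reversibility directly from an explicit formula for the path density and show that multiplication by $\beta_x e^{2u_{o,x}}$ produces a quantity that is symmetric under time reversal of $\hat w$. Concretely, I will first work in infinite volume under $Q_{x,\beta}^G$. Using \eqref{def:Q-o-beta-G} together with the fact that the waiting time $l(k)$ at vertex $w(k)$ is $\mathrm{Exp}(\beta_{w(k)})$-distributed conditional on the skeleton, the joint density of $(w(k),l(k))_{0\le k\le m}$ at a configuration $\pi=(\pi_0,\dots,\pi_m)\in V^{[0,m]}$, $\ell=(\ell_0,\dots,\ell_m)\in\R_+^{[0,m]}$ (with $\{\pi_k,\pi_{k+1}\}\in E$) is
\begin{align*}
\prod_{k=0}^{m-1}\frac{C_{\pi_k\pi_{k+1}}e^{u_{o,\pi_{k+1}}-u_{o,\pi_k}}}{2}e^{-\ell_k\beta_{\pi_k}}\cdot\beta_{\pi_m}e^{-\ell_m\beta_{\pi_m}},
\end{align*}
since one factor $\beta_{\pi_k}$ from the exponential law cancels the factor $\beta_{\pi_k}$ in the denominator of the transition probability for $k<m$.

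Next I would multiply by $\beta_{\pi_0}e^{2u_{o,\pi_0}}$. The telescoping product $\prod_{k=0}^{m-1}e^{u_{o,\pi_{k+1}}-u_{o,\pi_k}}=e^{u_{o,\pi_m}-u_{o,\pi_0}}$ combines with the prefactor to give
\begin{align*}
\beta_{\pi_0}\beta_{\pi_m}\,e^{u_{o,\pi_0}+u_{o,\pi_m}}\prod_{k=0}^{m-1}\frac{C_{\pi_k\pi_{k+1}}}{2}\prod_{k=0}^{m}e^{-\ell_k\beta_{\pi_k}},
\end{align*}
which is manifestly invariant under the reversal $(\pi_k,\ell_k)\mapsto(\pi_{m-k},\ell_{m-k})$ because $C_{xy}=C_{yx}$ and all remaining factors are symmetric in the endpoints $\pi_0,\pi_m$. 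Summing over starting points $x=\pi_0$ and integrating this joint density against $1_A$ yields the main equality; restricting the sum and integration to the event where $\pi_0=x$ and $\pi_m=x'$ gives \eqref{eq:reversibility-reduced-claim} with only one surviving term on each side.

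For the discrete finite-volume assertion \eqref{eq:reversibility-single-path}, I will apply exactly the same bookkeeping but without the waiting-time densities: the probability of the single skeleton path $\underline\pi$ under $Q^{G_n}_{\pi_0,\beta}$ is $\prod_{k=0}^{m-1}\frac{C^{(n)}_{\pi_k\pi_{k+1}}e^{u^{(n)}_{o,\pi_{k+1}}-u^{(n)}_{o,\pi_k}}}{2\beta_{\pi_k}}$, whose telescoping leaves a product of conductances $\prod_k C^{(n)}_{\pi_k\pi_{k+1}}$ in the numerator and $\prod_{k=0}^{m-1}\beta_{\pi_k}$ in the denominator. Multiplying by $\beta_{\pi_0}e^{2u^{(n)}_{o,\pi_0}}$ one reshuffles the denominator to $\prod_{k=1}^{m-1}\beta_{\pi_k}$, so that the resulting expression is symmetric in $\pi_0$ and $\pi_m$, i.e.\ equals the analogous expression for $\underline{\pi^\leftrightarrow}$ with prefactor $\beta_{\pi_m}e^{2u^{(n)}_{o,\pi_m}}$.

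There is essentially no obstacle beyond careful bookkeeping; the only point that needs a brief remark is that the assumption $\beta\in B'$ enters only to ensure that the $u_{o,\cdot}$ are well-defined real numbers (so that $Q^G_{x,\beta}$ makes sense via \eqref{def:Q-o-beta-G}), while for the finite-volume claim one merely needs $\beta\in B$ so that $u^{(n)}_{o,\cdot}$ is defined through \eqref{eq:def-psi-n}. The symmetry of $C_{xy}$ and the telescoping structure of $e^{u_{o,\pi_{k+1}}-u_{o,\pi_k}}$ are doing all of the real work; once the explicit path density has been written down, both assertions are immediate algebraic identities followed by a summation or integration.
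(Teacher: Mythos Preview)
Your proposal is correct and follows essentially the same route as the paper: both compute the explicit probability/density of a fixed decorated path segment, multiply by $\beta_{\pi_0}e^{2u_{o,\pi_0}}$, telescope the factors $e^{u_{o,\pi_{k+1}}-u_{o,\pi_k}}$, and observe that the resulting expression is symmetric under reversal. The only cosmetic difference is that the paper works with a $\pi$-system of cylinder sets $A=\prod_{k=0}^m(\{x_k\}\times(l_k,\infty))$ and integrates, whereas you differentiate first and write the joint density; the algebra and the key identity are identical.
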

Recall that $\underline\pi$ denotes the event that the process initially
follows $\pi$. 

\medskip\noindent\begin{proof}
The argument is the same for the infinite volume version and the finite 
volume version. For this reason we describe it only for infinite volume. 

It suffices to consider measurable sets of the form 
$A=\prod_{k=0}^m(\{x_k\}\times(l_k,\infty))$ with given $x_k\in V$, $l_k\ge 0$
fulfilling $x=x_0$ and ${x'}=x_m$. Then, the claim boils down to 
\eqref{eq:reversibility-reduced-claim} for this special~$A$. 
We express the probability on the left-hand side as follows. Writing 
$B=\prod_{k=0}^m(l_k,\infty)$, it holds 
\begin{align}
&Q_{x,\beta}^G[(\hat w(k))_{k\in[0,m]}\in A]
= \int_B\left(\prod_{k=0}^{m-1}e^{-\beta_{x_k}l(k)}\frac{C_{x_k x_{k+1}}}{2}
e^{u_{o,x_{k+1}}-u_{o,x_k}}\right)
\beta_{x_m}e^{-\beta_{x_m}l(m)}\prod_{k=0}^mdl(k)\nonumber\\
&= e^{-2u_{o,x}}\cdot\beta_{x'}e^{u_{o,{x'}}+u_{o,x}}\int_B 
\left(\prod_{k=0}^{m-1}\frac{C_{x_k x_{k+1}}}{2}\right)
\prod_{k=0}^m e^{-\beta_{x_k}l(k)}\, dl(k).
\label{eq:equation-1-rev}
\end{align}
Indeed, $e^{-\beta_{x_k}l(k)}$ for $k\in[0,m]$ is the probability to remain at
$x_k$ at least a time span of length $l(k)$ after arrival at $x_k$. 
Moreover, $\tfrac{C_{x_k x_{k+1}}}{2}e^{u_{o,x_{k+1}}-u_{o,x_k}}\, dl(k)$ for 
$k\in[0,m-1]$ denotes the probability to jump
from $x_k$ to $x_{k+1}$ in an infinitesimal time span of length $dl(k)$ 
given that the particle is at $x_k$. Similarly, 
$\beta_{x_m}\, dl(m)=\sum_{z\in V} \tfrac{C_{x_m z}}{2}e^{u_{o,z}-u_{o,x_m}}\, dl(m)$
equals the probability to jump from $x_m$ to another site in an infinitesimal 
time span of length $dl(m)$ given that the particle is at $x_m$. 
Using the same argument and the set $B^\leftrightarrow=\prod_{k=0}^m(l_{m-k},\infty)$, 
we obtain 
\begin{align}
& Q_{{x'},\beta}^G[(\hat w(m-k))_{k\in[0,m]}\in A]\nonumber\\
&= e^{-2u_{o,{x'}}}\cdot\beta_xe^{u_{o,{x'}}+u_{o,x}}\int_{B^\leftrightarrow} 
\left(\prod_{k=0}^{m-1}\frac{C_{x_{m-k} x_{m-k-1}}}{2}\right)
\prod_{k=0}^m e^{-\beta_{x_{m-k}}l(k)}\, dl(k)\nonumber\\
&=e^{-2u_{o,{x'}}}\cdot\beta_xe^{u_{o,{x'}}+u_{o,x}}\int_B 
\left(\prod_{k=0}^{m-1}\frac{C_{x_k x_{k+1}}}{2}\right)
\prod_{k=0}^m e^{-\beta_{x_k}l(k)}\, dl(k). 
\end{align}
Comparing this with \eqref{eq:equation-1-rev} finishes the proof of 
the reversibility claim \eqref{eq:reversibility-reduced-claim}, which enables 
us to conclude. 
\end{proof}

Let $\beta\in B'$ and let $K\subseteq V$ be finite with $o\in K$. 
Recall the definitions \eqref{eq:def-Q-K} of $\hat Q_{K,\beta}$,
\eqref{def-H-A} of $\hit_K$, and 
\eqref{eq:def-e-K-u-infinite-volume} of $e_{K,\beta}$. 
The total mass of $\hat Q_{K,\beta}$ equals
\begin{align}
\label{eq:total-mass-hat-Q-K}
\hat Q_{K,\beta}(\hat W)
=\sum_{x\in K}\beta_xe^{2u_{o,x}} Q_{x,\beta}^G[A_K]
=\sum_{x\in K}\beta_xe_{K,\beta}(x). 
\end{align}

\begin{lemma}[Consistency]
\label{le:consistency}
Let $\beta\in B'$ and let $\emptyset\neq K\subseteq K'\subseteq V$ be non-empty finite subsets 
of $V$. For $x\in K$, $x'\in K'$, 
$\ell,\ell'\ge 0$, $m\in\N_0$, $B_1\in \hat\W(\N)$, $B_2\in\hat\W([1,m-1])$,
and $B_3\in \hat\W(\N)$, one has 
\begin{align}
&\hat Q_{K',\beta}[(\hat w(-n))_{n\in\N}\in B_1,\, w(0)=x',\, l(0)\ge \ell',\,
\hit_K(\hat w|_{\N_0})=m,\nonumber\\ 
& \qquad
\hat w|_{[1,m-1]}\in B_2,\, w(m)=x,\, l(m)\ge\ell,
(\hat w(n+m))_{n\in\N}\in B_3]\nonumber\\ 
= & \hat Q_{K,\beta}[(\hat w(-n-m))_{n\in\N_0}\in A_{K'},\,
(\hat w(-n-m))_{n\in\N}\in B_1,\, w(-m)=x',\, l(-m)\ge \ell',\nonumber\\  
& \qquad(\hat w(n-m))_{n\in[1,m-1]}\in B_2,\, w(0)=x,\, l(0)\ge\ell,
\hat w|_\N\in B_3].
\label{eq:claim-consistency}
\end{align}
\end{lemma}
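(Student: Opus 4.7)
The plan is to expand both sides using the defining formula~\eqref{eq:def-Q-K} for $\hat Q_{K',\beta}$ and $\hat Q_{K,\beta}$ respectively, apply the strong Markov property of $Q^G_{\cdot,\beta}$ at time $m$ to peel off the $m$-step piece joining $x'$ and $x$, and finally match the two resulting expressions using reversibility (Lemma~\ref{le:reversibility}).

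First, applying~\eqref{eq:def-Q-K} with $K',x'$ to the left-hand side of~\eqref{eq:claim-consistency} yields
\[
\beta_{x'}e^{-\ell'\beta_{x'}}e^{2u_{o,x'}}\cdot Q^G_{x',\beta}[\hat w|_\N\in B_1,\,\hat w|_{\N_0}\in A_{K'}]\cdot Q^G_{x',\beta}[\hat w|_\N\in\tilde B],
\]
where $\tilde B\in\hat\W(\N)$ bundles the remaining constraints on $(\hat w(k))_{k\in\N}$, namely $\hit_K=m$, $\hat w|_{[1,m-1]}\in B_2$, $w(m)=x$, $l(m)\ge\ell$, and $(\hat w(n+m))_{n\in\N}\in B_3$. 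The strong Markov property at time $m$ conditioned on $w(m)=x$, together with the independence of $l(m)\sim\operatorname{Exp}(\beta_x)$ from the post-$m$ trajectory, factorizes the last factor as $Q^G_{x',\beta}[\mathcal A]\cdot e^{-\ell\beta_x}\cdot Q^G_{x,\beta}[\hat w|_\N\in B_3]$, where $\mathcal A:=\{w(k)\notin K\text{ for }1\le k\le m-1,\,\hat w|_{[1,m-1]}\in B_2,\,w(m)=x\}$. For the right-hand side, applying~\eqref{eq:def-Q-K} with $K,x$ and translating the past constraints on the negative indices $k<0$ of the bi-infinite path into a single event on $\hat w|_\N$ of the forward process from $x$ (via the identification $n\leftrightarrow -n$) produces: $w(m)=x'$, $l(m)\ge\ell'$, $(\hat w(m+n))_{n\in\N}\in B_1$, $w(k)\notin K'$ for $k>m$, and $\hat w|_{[1,m-1]}\in R(B_2)$, where $R:\hat W([1,m-1])\to\hat W([1,m-1])$ denotes the order-reversal $R\phi(n)=\phi(m-n)$. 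Applying the strong Markov property at time $m$ once more factorizes this past factor as
\[
Q^G_{x,\beta}[\mathcal B]\cdot e^{-\ell'\beta_{x'}}\cdot Q^G_{x',\beta}[\hat w|_\N\in B_1,\,\hat w|_{\N_0}\in A_{K'}],
\]
with $\mathcal B:=\{w(k)\notin K\text{ for }1\le k\le m-1,\,\hat w|_{[1,m-1]}\in R(B_2),\,w(m)=x'\}$; the inclusion $K\subseteq K'$ ensures the $A_K$-condition is automatically satisfied outside the interior $[1,m-1]$.

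Comparing the two sides, the exponential waiting-time factors and the outer pieces $Q^G_{x',\beta}[\hat w|_\N\in B_1,\hat w|_{\N_0}\in A_{K'}]$ and $Q^G_{x,\beta}[\hat w|_\N\in B_3]$ appear identically, so~\eqref{eq:claim-consistency} reduces to the identity $\beta_{x'}e^{2u_{o,x'}}Q^G_{x',\beta}[\mathcal A]=\beta_x e^{2u_{o,x}}Q^G_{x,\beta}[\mathcal B]$. This is a direct instance of~\eqref{eq:reversibility-reduced-claim} applied to the set $A=\{(\hat w(k))_{k\in[0,m]}:w(0)=x,\,w(m)=x',\,w(k)\notin K\text{ for }k\in[1,m-1],\,\hat w|_{[1,m-1]}\in R(B_2)\}$, since the time-reversal $k\mapsto m-k$ preserves interior $K$-avoidance and, because $R$ is an involution, sends $R(B_2)$ back to $B_2$. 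The main obstacle lies in the careful bookkeeping of the index reversal in the past factor, notably the appearance of the reflected set $R(B_2)$ and the verification that the conditions translate correctly across the time reversal; once this translation is spelled out, the identity follows from a single invocation of Lemma~\ref{le:reversibility}, and the degenerate case $m=0$ (which forces $x=x'$ and uses $A_{K'}\subseteq A_K$) is handled by the same scheme with trivial Markov and reversal steps.
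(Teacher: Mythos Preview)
Your approach is essentially the same as the paper's: expand both sides via the defining formula~\eqref{eq:def-Q-K}, apply the Markov property at time $m$, and then invoke reversibility~\eqref{eq:reversibility-reduced-claim} on the finite $m$-step bridge between $x'$ and $x$. The paper carries out the same steps, only with a different organization (it transforms the left-hand side step by step into the right-hand side rather than expanding both sides and matching).

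Two small points where the paper is more careful than your write-up: (i) The case $m\ge 1$ with $x'\in K$ is treated separately in the paper; you do not mention it. In that case $\hit_K(\hat w|_{\N_0})=0\neq m$ under $Q^G_{x',\beta}$, so the left-hand side vanishes, and on the right-hand side the built-in $A_K$ condition forces $w(m)\notin K$, contradicting $w(m)=x'\in K$, so it vanishes too. Your sentence ``the inclusion $K\subseteq K'$ ensures the $A_K$-condition is automatically satisfied outside the interior $[1,m-1]$'' is only correct at index $m$ when $x'\notin K$; it is precisely the failure at index $m$ that kills the right-hand side when $x'\in K$. (ii) For $m=0$ you write $A_{K'}\subseteq A_K$, which is not literally true; what holds (and what the paper uses) is $\{w(0)=x\}\cap A_{K'}\subseteq A_K$ when $x\in K$. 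Neither point is a genuine gap, but both deserve one explicit sentence.
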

We remark that the case $m\in\{0,1\}$, where $[1,m-1]=\emptyset$, is 
included. 

\medskip\noindent\begin{proof}[Proof of Lemma \ref{le:consistency}]
We consider the case $m=0$ first. If $x\neq x'$, then both sides of 
\eqref{eq:claim-consistency} vanish, being measures of the empty set. 
Assume $x=x'$. The case $B_2=\emptyset$ is trivial. Otherwise, 
using that $\{w(0)=x\}\subseteq\{\hit_K(\hat w|_{\N_0})=0\}$ in the 
first equality and 
$\{w(0)=x,\hat w|_{\N_0}\in A_{K'}\}\subseteq\{\hat w|_{\N_0}\in A_K\}$
in the third equality, we obtain
\begin{align}
&\text{l.h.s.\eqref{eq:claim-consistency}}\nonumber\\
=&\hat Q_{K',\beta}[(\hat w(-n))_{n\in\N}\in B_1,\, w(0)=x,\, 
l(0)\ge \max\{\ell,\ell'\},\, \hat w|_\N\in B_3]\nonumber\\ 
=& \beta_xe^{-\max\{\ell,\ell'\}\beta_x}e^{2u_{o,x}} 
Q^G_{x,\beta}[\hat w|_\N\in B_1,\, \hat w|_{\N_0}\in A_{K'}]\,
Q^G_{x,\beta}[\hat w|_\N\in B_3]\nonumber\\
= & \beta_xe^{-\max\{\ell,\ell'\}\beta_x}e^{2u_{o,x}} 
Q^G_{x,\beta}[\hat w|_\N\in B_1,\, \hat w|_{\N_0}\in A_{K'},\,\hat w|_{\N_0}\in A_K]\,
Q^G_{x,\beta}[\hat w|_\N\in B_3]\nonumber\\
= & \hat Q_{K,\beta}[(\hat w(-n))_{n\in\N_0}\in A_{K'},\,
(\hat w(-n))_{n\in\N}\in B_1,\, w(0)=x,\, l(0)\ge\max\{\ell,\ell'\},\,
\hat w|_\N\in B_3]\nonumber\\
= & \text{r.h.s.\eqref{eq:claim-consistency}}
\end{align}
Next, we treat the remaining case $m\ge 1$. Assume $x'\in K$. Then, 
$\{w(0)=x',\,\hit_K(\hat w|_{\N_0})=m\}=\emptyset$ implies 
$\text{l.h.s.\eqref{eq:claim-consistency}}=0$. Furthermore, the event
$\{w(-m)=x'\}$ is contained in $\{(\hat w(-n))_{n\in\N_0}\notin A_K\}$.
Together with the definition of $\hat Q_{K,\beta}$ this implies 
$\text{r.h.s.\eqref{eq:claim-consistency}}=0$, which proves 
\eqref{eq:claim-consistency} in the case $m\ge 1$, $x'\in K$. 

Finally, assume $x'\in K'\setminus K$. 
Using the definition of $\hat Q_{K',\beta}$, we obtain 
\begin{align}
&\text{l.h.s.\eqref{eq:claim-consistency}}
=\beta_{x'}e^{-\ell'\beta_{x'}}e^{2u_{o,x'}} 
Q^G_{x',\beta}[\hat w|_\N\in B_1,\, \hat w|_{\N_0}\in A_{K'}]\cdot 
\nonumber\\
& Q^G_{x',\beta}[\hit_K(\hat w|_{\N_0})=m,\, 
\hat w|_{[1,m-1]}\in B_2,\, w(m)=x,\, l(m)\ge\ell,
(\hat w(n+m))_{n\in\N}\in B_3].
\label{eq:step1}
\end{align}
Note that given $x'\not\in K$, up to modification on the $Q^G_{x',\beta}$-null set
$\{w(0)\neq x'\}$, 
the event $\{\hit_K(\hat w|_{\N_0})=m\}$ is measurable with respect to 
$\sigma(\hat w|_\N)$ and hence enters only in the last factor on the 
right-hand side in \eqref{eq:step1}. We apply the Markov property at time 
$m$ to the last probability in \eqref{eq:step1}:
\begin{align}
& \text{last factor in \eqref{eq:step1}} \nonumber\\
= & Q^G_{x',\beta}[\hit_K(\hat w|_{\N_0})=m,\, \hat w|_{[1,m-1]}\in B_2,\, w(m)=x] 
e^{-\ell\beta_{x}}Q^G_{x,\beta}[ \hat w|_\N\in B_3].
\label{eq:step17}
\end{align}
An application of the reversibility formula 
\eqref{eq:reversibility-reduced-claim} in the case 
\begin{align}
& \{(\hat w(k))_{k\in[0,m]}\in A\}
=\{w(0)=x',\hit_K(\hat w|_{\N_0})=m,\, \hat w|_{[1,m-1]}\in B_2,\, w(m)=x\}\nonumber\\
=&\{w(0)=x',w(k)\not\in K\text{ for }k\in[1,m-1],\, 
(\hat w(n))_{n\in[1,m-1]}\in B_2,\, w(m)=x\}
\end{align}
yields 
\begin{align}
&Q^G_{x',\beta}[\hit_K(\hat w|_{\N_0})=m,\, \hat w|_{[1,m-1]}\in B_2,\, w(m)=x] \\
=& \frac{\beta_{x}}{\beta_{x'}}
e^{2(u_{o,x}-u_{o,x'})}Q^G_{x,\beta}[w(k)\not\in K\text{ for }k\in[1,m-1],\, 
(\hat w(m-n))_{n\in[1,m-1]}\in B_2,\, w(m)=x']. \nonumber
\end{align}
We insert this in \eqref{eq:step17} and then the result in \eqref{eq:step1}. 
Afterwards, we use the Markov property again. This yields 
\begin{align}
&\text{l.h.s.\eqref{eq:claim-consistency}}
=\beta_{x'}e^{-\ell'\beta_{x'}}e^{2u_{o,x'}} 
Q^G_{x',\beta}[\hat w|_\N\in B_1, \hat w|_{\N_0}\in A_{K'}] \nonumber\\
& \cdot \frac{\beta_{x}}{\beta_{x'}}
e^{2(u_{o,x}-u_{o,x'})} Q^G_{x,\beta}[w(k)\not\in K\text{ for }k\in[1,m-1],
(\hat w(m-n))_{n\in[1,m-1]}\in B_2, w(m)=x']\nonumber\\
&\cdot 
e^{-\ell\beta_{x}}Q^G_{x,\beta}[ \hat w|_\N\in B_3]\nonumber\\
= &\beta_{x}e^{-\ell\beta_{x}}e^{2u_{o,x}}
Q^G_{x,\beta}[w(k)\not\in K\text{ for }k\in[1,m-1],
(\hat w(m-n))_{n\in[1,m-1]}\in B_2,w(m)=x']\nonumber\\
&\cdot e^{-\ell'\beta_{x'}} 
Q^G_{x',\beta}[\hat w|_\N\in B_1,\hat w|_{\N_0}\in A_{K'}] 
Q^G_{x,\beta}[ \hat w|_\N\in B_3]\nonumber\\
= & \beta_{x}e^{-\ell\beta_{x}}e^{2u_{o,x}}
Q^G_{x,\beta}[w(k)\not\in K\text{ for }k\in[1,m-1],\, 
(\hat w(m-n))_{n\in[1,m-1]}\in B_2,\nonumber\\ 
& w(m)=x', \, l(m)\ge\ell', \,
(\hat w(m+n))_{n\in\N}\in B_1,\, (\hat w(m+n))_{n\in\N_0}\in A_{K'}]
Q^G_{x,\beta}[ \hat w|_\N\in B_3]\nonumber\\
= & \text{r.h.s.\eqref{eq:claim-consistency}}.
\end{align}
In the last equality, we have used that $x'\in K'\setminus K$ and 
$x\in K\subseteq K'$ imply 
\begin{align}
& \{ w(-k)\notin K\text{ for all }k\in[1,m-1],\, w(-m)=x',\, 
(\hat w(-n-m))_{n\in\N_0}\in A_{K'},\, w(0)=x\}\nonumber\\
=& \{ (\hat w(-n-m))_{n\in\N_0}\in A_{K'},\, (\hat w(-n))_{n\in\N_0}\in A_K,\, 
w(-m)=x',\, w(0)=x\} .
\end{align}
We conclude that the claim \eqref{eq:claim-consistency} holds in all cases. 
\end{proof}

\medskip
For $K\subseteq V$ finite, defining  
\begin{align}
\hat W_K=\{(w,l)\in\hat W:\, w(0)\in K, w(k)\notin K\text{ for }k<0\}, 
\label{eq:def-W-K-star-old}
\end{align}
the definition of the event $\hat W_K^*$ given in \eqref{eq:def-W-K-star}
can be rewritten as $\hat W_K^*=\pi^*[\hat W_K]$.
Clearly, $\hat Q_{K,\beta}$ is supported on $\hat W_K$. 
For finite subsets $K\subseteq K'$ of $V$, let
\begin{align}
\hat W_{K',K}=\{(w,l)\in\hat W_{K'}:\ w(m)\in K\text{ for some }
m\in\Z\}.
\end{align}
Consider the time shift $\theta_{K',K}:\hat W_K\to\hat W$ 
uniquely characterized by $\operatorname{range}(\theta_{K',K})\subseteq W_{K',K}$
and $\pi^*(\theta_{K',K}(\hat w))=\pi^*(\hat w)$
for $\hat w\in\hat W_K$. Thus the map $\theta_{K',K}$ does nothing but 
a shift of any $\hat w$ such that
its image $\theta_{K',K}(\hat w)$ visits $K'$ for the first time 
at time $0$. Lemma 
\ref{le:consistency} may be rephrased in the following form: 

\begin{lemma}
For all $\beta\in B'$ and all non-empty finite $K\subseteq K'\subset V$, one has 
\begin{align}
\label{eq:claim-theta-Q-hat}
\theta_{K',K}[\hat Q_{K,\beta}]=1_{\hat W_{K',K}}\hat Q_{K',\beta}. 
\end{align}
As a consequence, we obtain 
\begin{align}
\label{eq:consistency-Q-K}
\pi^*[\hat Q_{K,\beta}]=1_{\hat W_K^*}\pi^*[\hat Q_{K',\beta}]\le \pi^*[\hat Q_{K',\beta}].
\end{align}
\end{lemma}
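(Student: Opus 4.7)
The plan is to recast Lemma \ref{le:consistency} as the push-forward identity \eqref{eq:claim-theta-Q-hat} on a generating $\pi$-system of cylinder events, then deduce \eqref{eq:consistency-Q-K} by applying $\pi^*$ and using $\pi^*\circ\theta_{K',K}=\pi^*$. The geometric core is that if $\hat w\in\hat W_K$, then $w(0)\in K\subseteq K'$ forces $w$ to meet $K'$ at time $0$, and since paths in $\hat W$ visit any vertex only finitely often, $T(\hat w):=\min\{k\in\Z:w(k)\in K'\}$ is a well-defined nonpositive integer. The unique shift satisfying the two defining requirements of $\theta_{K',K}$ is then $\hat w\mapsto\hat w(\,\cdot\,+T(\hat w))$, and writing $m:=-T(\hat w)\in\N_0$, this shift bijectively maps the slice $\{\hat w\in\hat W_K:-T(\hat w)=m\}$ onto the slice $\{\hat w'\in\hat W_{K',K}:\hit_K(\hat w'|_{\N_0})=m\}$.

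First I would verify \eqref{eq:claim-theta-Q-hat} on product cylinders parametrized as in Lemma \ref{le:consistency}: for each $m\in\N_0$, $x'\in K'$, $x\in K$, $\ell',\ell\ge 0$, $B_1,B_3\in\hat\W(\N)$, $B_2\in\hat\W([1,m-1])$, the event on the left-hand side of \eqref{eq:claim-consistency} is contained in $\hat W_{K',K}$ (via $\hit_K=m<\infty$ together with $w(0)=x'\in K'$), and its $\theta_{K',K}$-preimage within $\hat W_K$ is exactly the event on the right-hand side of \eqref{eq:claim-consistency}. Indeed, shifting indices by $-m$ sends the future tail $B_3$, middle block $B_2$, and the hits at $0$ and $m$ to their shifted counterparts; the defining constraint that the shifted path first meets $K'$ at index $0$ translates precisely into the condition $(\hat w(-n-m))_{n\in\N_0}\in A_{K'}$. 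Lemma \ref{le:consistency} therefore gives the pushforward identity on each such cylinder, and since these cylinders (together with their disjoint union over $m$) generate the restriction of $\hat\W$ to $\hat W_{K',K}$, the identity \eqref{eq:claim-theta-Q-hat} extends by a standard monotone class argument.

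Finally, to obtain \eqref{eq:consistency-Q-K}, I push \eqref{eq:claim-theta-Q-hat} forward by $\pi^*$. The defining property $\pi^*\circ\theta_{K',K}=\pi^*$ on $\hat W_K$ yields
\[
\pi^*[\hat Q_{K,\beta}]=\pi^*\bigl[1_{\hat W_{K',K}}\hat Q_{K',\beta}\bigr].
\]
Since $\hat Q_{K',\beta}$ is supported on $\hat W_{K'}$ and within $\hat W_{K'}$ one has $\hat W_{K',K}=(\pi^*)^{-1}(\hat W_K^*)$ (a representative in $\hat W_{K'}$ meets $K$ iff its $\pi^*$-class does), the indicator factors through $\pi^*$, giving $1_{\hat W_K^*}\pi^*[\hat Q_{K',\beta}]$; the subsequent inequality is then immediate. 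The main obstacle I anticipate is the index-shift bookkeeping needed to identify the $\theta_{K',K}$-preimage of the cylinder on the left-hand side of \eqref{eq:claim-consistency} with the $A_{K'}$-constrained cylinder on its right-hand side; once that geometric matching is set, the remainder is a routine measure-theoretic extension combined with the direct push-forward under $\pi^*$.
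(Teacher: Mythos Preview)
Your proposal is correct and follows essentially the same route as the paper: reduce \eqref{eq:claim-theta-Q-hat} to the cylinder events of Lemma~\ref{le:consistency}, identify their $\theta_{K',K}$-preimages with the $A_{K'}$-constrained events on the right of \eqref{eq:claim-consistency}, extend via a $\pi$-system/monotone class argument, and then push forward by $\pi^*$ using $\pi^*\circ\theta_{K',K}=\pi^*$ together with $\hat W_{K'}\cap(\pi^*)^{-1}[\hat W_K^*]=\hat W_{K',K}$. The only cosmetic difference is that the paper packages the generating cylinders as a single $\pi$-system $\B_{K',K}$ (with the $A_{K'}$ constraint built in so that $D\subseteq\hat W_{K',K}$ holds on the nose), whereas you absorb that constraint via the support of $\hat Q_{K',\beta}$; either way the identification goes through.
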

\begin{proof}
Because $\hat Q_{K,\beta}$ is supported on the domain $\hat W_K$ of 
the shift $\theta_{K',K}$, the image measure $\theta_{K',K}[\hat Q_{K,\beta}]$
is indeed well-defined. 
We consider the $\sigma$-fields 
\begin{align}
\hat\W_K:=\{A\in\hat\W: A\subseteq\hat W_K\}, \qquad
\hat\W_{K',K}:=\{A\in\hat\W: A\subseteq\hat W_{K',K}\}.   
\end{align}
With parameters as in Lemma \ref{le:consistency}, the set $\B_{K',K}$ of events 
of the form 
\begin{align}
D=\{ & (\hat w(-n))_{n\in\N_0}\in A_{K'},\, (\hat w(-n))_{n\in\N}\in B_1,\, w(0)=x',\, l(0)\ge \ell',\,
\hit_K(\hat w|_{\N_0})=m,\nonumber\\ 
& \hat w|_{[1,m-1]}\in B_2,\, w(m)=x,\, l(m)\ge\ell,
(\hat w(n+m))_{n\in\N}\in B_3\}
\end{align}
is a generator of $\hat\W_{K',K}$, which is stable under intersections. 
Furthermore, the space $\hat W_{K',K}$ is a countable union of events 
of this form. Therefore it suffices to prove the claim 
\eqref{eq:claim-theta-Q-hat} restricted to $\B_{K',K}$. 

Note that $\text{l.h.s.\eqref{eq:claim-consistency}}=\hat Q_{K',\beta}(D)$; 
the condition $(\hat w(-n))_{n\in\N_0}\in A_{K'}$ comes from the definition 
\eqref{eq:def-Q-K} of the measure $Q_{K',\beta}$. Since 
\begin{align}
\theta_{K',K}^{-1}[D]=
\{ & (\hat w(-n))_{n\in\N_0}\in A_K,\,(\hat w(-n-m))_{n\in\N_0}\in A_{K'},\nonumber\\  
& (\hat w(-n-m))_{n\in\N}\in B_1,\, w(-m)=x',\, l(-m)\ge \ell',\nonumber\\  
& (\hat w(n-m))_{n\in[1,m-1]}\in B_2,\, w(0)=x,\, l(0)\ge\ell,
\hat w|_\N\in B_3\}, 
\end{align}
Lemma \ref{le:consistency} shows that indeed the claim 
\eqref{eq:claim-theta-Q-hat} holds restricted to $\B_{K',K}$. 

Using that the measure $\hat Q_{K',\beta}$ is supported on $\hat W_{K'}$
and $\hat W_{K'}\cap(\pi^*)^{-1}[\hat W_K^*]=\hat W_{K',K}$, we infer 
$1_{\hat W_K^*}\pi^*[\hat Q_{K',\beta}]=\pi^*[1_{\hat W_{K',K}}\hat Q_{K',\beta}]$. 
Using formula \eqref{eq:claim-theta-Q-hat}, the facts that 
$\pi^*\circ\theta_{K',K}=\pi^*$ holds on $\hat W_K$ and that 
the measure $\hat Q_{K,\beta}$ is supported on $\hat W_K$, we conclude 
$\pi^*[1_{\hat W_{K',K}}\hat Q_{K',\beta}]=\pi^*[\hat Q_{K,\beta}]$. 
This proves the equality in the second claim \eqref{eq:consistency-Q-K}. 
The inequality in \eqref{eq:consistency-Q-K} is clear from 
$1_{\hat W_K^*}\le 1$. 
\end{proof}

\medskip\noindent
\begin{proof}[Proof of Theorem \ref{thm:intensity-measure}]
Take any increasing sequence of finite sets $K_n\uparrow V$ as $n\to\infty$. 
From \eqref{eq:consistency-Q-K} we know that $\pi^*[\hat Q_{K,\beta}](A)$
is monotonic in the set argument $K$. We conclude 
\begin{align}
\hat\nu_\beta(A):=\sup_{K\subset V\text{ finite}}\pi^*[\hat Q_{K,\beta}](A)
=\lim_{n\to\infty}\pi^*[\hat Q_{K_n,\beta}](A). 
\end{align}
By monotone convergence, this is $\sigma$-additive in $A$. Hence 
$\hat\nu_\beta$ is a measure. 
The equation \eqref{eq:vu-on-hat-W-K} is an immediate consequence
of \eqref{eq:consistency-Q-K}. Uniqueness follows from the fact
\begin{align}
\hat W^*=\bigcup_{n\in\N} \hat W_{K_n}^*. 
\end{align}
Because all measures $\pi^*[\hat Q_{K,\beta}]$ are finite, the measure
$\hat\nu_\beta$ is $\sigma$-finite. 

The equality in the claim \eqref{eq:non-triviality-nu-beta} is an 
immediate consequence of the restriction property \eqref{eq:vu-on-hat-W-K}.
The finiteness of $\pi^*[\hat Q_{K,\beta}](\hat W_K^*)$ follows from the 
definition \eqref{eq:def-Q-K} of $\hat Q_{K,\beta}$. Finally, given a finite
set $K$ with 
$\emptyset\neq K\subset V$ and $y\in K$, using transience, we take 
$x\in K$ such that with positive probability the Markov jump process 
with law $Q^G_{y,\beta}$ visits $K$ for the last time in $x$. In particular, 
$Q^G_{x,\beta}(A_K)>0$. In view of the definition of $\hat Q_{K,\beta}$, this 
implies the remaining claim $\pi^*[\hat Q_{K,\beta}](\hat W_K^*)>0$. 
\end{proof}
\end{appendix}

\paragraph{Acknowledgment.} 
This work is supported by National Science Foundation of China (NSFC), 
grant No.\ 11771293, and by the Agence Nationale de la Recherche (ANR) 
in France, project MALIN, No.\ ANR-16-CE93-0003.

\end{document}